\newtheorem{thm}{Theorem}[section]
\newtheorem*{thm*}{Theorem}
\newtheorem{cor}[thm]{Corollary}
\newtheorem{lemma}[thm]{Lemma}
\newtheorem{obs}[thm]{Observation}
\newtheorem*{conj*}{Conjecture}
\theoremstyle{definition}
\newtheorem{defn*}{Definition}
\newtheorem*{example*}{Example}
\newtheorem*{comment*}{Comment}
\newcommand{\CCC}{\mathcal{C}}
\newcommand{\GGG}{\mathcal{G}}
\newcommand{\PPP}{\mathcal{P}}
\newcommand{\Free}{\operatorname{Free}}
\newcommand{\Prime}{\operatorname{Prime}}
\newcommand{\Label}{\operatorname{label}}
\newcommand{\Colour}{\operatorname{colour}}
\newcommand{\cwd}{\operatorname{cwd}}
\newcommand{\rwd}{\operatorname{rwd}}
\newcommand{\tree}{\operatorname{tree}}
\newcommand{\mnnodearray}[2]{ 
\foreach \i in {1,...,#1}
	\foreach \j in {1,...,#2}
		\node at (\i,\j) {};
}
\newcommand{\uphoriz}[2]{ 
\pgfmathtruncatemacro\rightcol{#1 + 1}
\foreach \i in {1,...,#2}
	\foreach \j in {\i,...,#2}
		\draw (#1,\i) -- (\rightcol,\j);
}
\newcommand{\downhoriz}[2]{ 
\pgfmathtruncatemacro\rightcol{#1 + 1}
\foreach \i in {1,...,#2}
	\foreach \j in {\i,...,#2}
		\draw (#1,\j) -- (\rightcol,\i);
}
\newcommand{\upnohoriz}[2]{ 
\pgfmathtruncatemacro\rightcol{#1 + 1}
\pgfmathtruncatemacro\rowminus{#2 - 1}
\foreach \i in {1,...,\rowminus}
	{
	\pgfmathtruncatemacro\ip{\i+1}
	\foreach \j in {\ip,...,#2}
		\draw (#1,\i) -- (\rightcol,\j);
	}
}
\newcommand{\downnohoriz}[2]{ 
\pgfmathtruncatemacro\rightcol{#1 + 1}
\pgfmathtruncatemacro\rowminus{#2 - 1}
\foreach \i in {1,...,\rowminus}
	{
	\pgfmathtruncatemacro\ip{\i+1}
	\foreach \j in {\ip,...,#2}
		\draw (#1,\j) -- (\rightcol,\i);
	}
}
\newcommand{\horiz}[2]{ 
\pgfmathtruncatemacro\rightcol{#1 + 1}
\foreach \i in {1,...,#2}
		\draw (#1,\i) -- (\rightcol,\i);
}
\newcommand{\convexpath}[2]{
  [   
  create hullcoords/.code={
    \global\edef\namelist{#1}
    \foreach [count=\counter] \nodename in \namelist {
      \global\edef\numberofnodes{\counter}
      \coordinate (hullcoord\counter) at (\nodename);
    }
    \coordinate (hullcoord0) at (hullcoord\numberofnodes);
    \pgfmathtruncatemacro\lastnumber{\numberofnodes+1}
    \coordinate (hullcoord\lastnumber) at (hullcoord1);
  },
  create hullcoords
  ]
  ($(hullcoord1)!#2!-90:(hullcoord0)$)
  \foreach [
  evaluate=\currentnode as \previousnode using \currentnode-1,
  evaluate=\currentnode as \nextnode using \currentnode+1
  ] \currentnode in {1,...,\numberofnodes} {
    let \p1 = ($(hullcoord\currentnode) - (hullcoord\previousnode)$),
    \n1 = {atan2(\y1,\x1) + 90},
    \p2 = ($(hullcoord\nextnode) - (hullcoord\currentnode)$),
    \n2 = {atan2(\y2,\x2) + 90},
    \n{delta} = {Mod(\n2-\n1,360) - 360}
    in 
    {arc [start angle=\n1, delta angle=\n{delta}, radius=#2]}
    -- ($(hullcoord\nextnode)!#2!-90:(hullcoord\currentnode)$) 
  }
}
\tikzstyle{vertex}=[circle, draw, fill=black,
\tikzstyle{every node}=[circle, draw, fill=black,
\title{Uncountably many minimal hereditary classes of graphs of unbounded clique-width}
\author{R. Brignall \qquad D. Cocks\\
\small School of Mathematics and Statistics\\
\small The Open University, UK}
\begin{document}
\maketitle

\begin{abstract}
Given an infinite word over the alphabet $\{0,1,2,3\}$, we define a class of bipartite hereditary graphs $\mathcal{G}^\alpha$, and show that $\mathcal{G}^\alpha$ has unbounded clique-width unless $\alpha$ contains at most finitely many non-zero letters. 

We also show that $\mathcal{G}^\alpha$ is minimal of unbounded clique-width if and only if $\alpha$ belongs to a precisely defined collection of words $\Gamma$. The set $\Gamma$ includes all almost periodic words containing at least one non-zero letter, which both enables us to exhibit uncountably many pairwise distinct minimal classes of unbounded clique width, and also proves one direction of a conjecture due to Collins, Foniok, Korpelainen,  Lozin and Zamaraev. Finally, we show that the other direction of the conjecture is false, since $\Gamma$ also contains words that are \emph{not} almost periodic.
\end{abstract}

%
%
%
%
%
%
\section{Introduction}
Typically, when some graph parameter is bounded for a given graph or collection of graphs, then there exist efficient (polynomial time) algorithms for a range of problems that are in general intractable. To give two examples, Courcelle's Theorem~\cite{courcelle:the-monadic:} states that any graph property expressible in MSO$_2$ logic can be decided in linear time on graphs with bounded treewidth, and Courcelle, Makowsky and Rotics~\cite{courcelle:linear-time-sol:} showed that any graph property expressible in MSO$_1$ logic has a linear time algorithm on graphs with bounded clique-width.

While treewidth is the parameter of choice for minor-closed classes of graphs, clique-width has the property that it can remain bounded for graphs with a high edge density, and is thus of more use with \emph{hereditary} classes of graphs. Indeed, if $H$ is an induced subgraph of $G$, then the clique-width of $H$ is at most the clique-width of $G$. (For formal definitions, see Section~\ref{prelim}.) 

In light of the algorithmic consequences, a natural goal is to characterise which classes of graphs are unbounded with respect to a given parameter. 
A standard approach is to identify the \emph{minimal} classes: for example, planar graphs are the unique minimal minor-closed class of graphs of unbounded treewidth (see Robertson and Seymour~\cite{robertson:graph-minors-v:}), and circle graphs are the unique minimal vertex-minor-closed class of unbounded rank-width (or, equivalently, clique-width) -- see Geelen, Kwon, McCarty and Wollan~\cite{geelen:the-grid-theorem:}.

The situation for clique-width and hereditary classes is much more complicated, yet remains of significant interest (note that every vertex-minor-closed class is a hereditary class, but not vice-versa). First, there exist hereditary classes of graphs that have unbounded clique-width, but which contain \emph{no} minimal class of unbounded clique-width: it is well-known that the class of square grid graphs has this property; a more recent example is due to Korpelainen~\cite{korpelainen:a-new-graph:}, who also suggests possible ways to handle such classes. 

However, there do also exist minimal hereditary classes of unbounded clique-width. We refer the reader to the excellent survey by Dabrowksi, Johnson and Paulusma~\cite{dabrowski:cliquewidth} for further details of the progress in recent years. Of particular relevance here is the work of Collins, Foniok, Korpelainen and Lozin~\cite{collins:infinitely-many:}, in which a countably infinite family of minimal hereditary classes of unbounded clique-width is given. 

More precisely, the authors of~\cite{collins:infinitely-many:} construct hereditary bipartite graph classes by taking the finite induced subgraphs of an infinite graph whose vertices form a two-dimensional array and whose edges are defined by an infinite word over the alphabet $\{0,1,2\}$. They show that classes defined by an infinite periodic word over the alphabet $\{0,1\}$ are minimal of unbounded clique-width, and conjecture that a class defined by a word over the alphabet $\{0,1,2\}$ is minimal of unbounded clique-width if and only if the word is almost periodic, and not the all-zeros word. 

In this paper, we go further by proving the following results.

\begin{thm}\label{thm-main}
Let $\alpha$ be an infinite word over the alphabet $\{0,1,2,3\}$, and let $\GGG^\alpha$ be the corresponding hereditary graph class (as defined in Section~\ref{prelim}).
\begin{enumerate}[label=(\alph*)]
\item If $\alpha$ has an infinite number of non-zero letters, then $\GGG^\alpha$ has unbounded clique-width (Theorem~\ref {thm-0123unbound}).
\item If $\alpha$ is an almost periodic word with at least one non-zero letter then $\GGG^\alpha$ is minimal of unbounded clique-width (Theorem~\ref{thm-0123minim}).
\item The number of distinct minimal hereditary classes of graphs of unbounded clique-width is uncountably infinite (Theorem~\ref {cor-uncountable}).
\item Let $\Gamma$ denote the set of all recurrent words over $\{0,1,2,3\}$ with at least one non-zero letter for which the weight of the word between any two consecutive occurrences of any factor is bounded. Then $\GGG^\alpha$ is minimal of unbounded clique-width if and only if $\alpha\in\Gamma$ (Theorem~\ref{theorem-recur}).
\end{enumerate}
\end{thm}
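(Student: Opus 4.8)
The plan is to prove the two implications separately, using Theorem~\ref{thm-0123unbound} as the engine for unboundedness throughout. First I would record two easy reductions. On the one hand, every $\alpha\in\Gamma$ has infinitely many non-zero letters: it contains a non-zero letter, and recurrence forces that letter, regarded as a factor of length one, to recur infinitely often; hence Theorem~\ref{thm-0123unbound} already gives that $\GGG^\alpha$ has unbounded clique-width, and for the ``if'' direction it remains only to show that every proper hereditary subclass is bounded. On the other hand, $\GGG^\alpha$ has bounded clique-width whenever $\alpha$ has at most finitely many non-zero letters (the case excluded in Theorem~\ref{thm-0123unbound}), so such a class is certainly not minimal of unbounded clique-width; thus in the ``only if'' direction I may assume $\alpha$ has infinitely many non-zero letters, and must exhibit, whenever $\alpha\notin\Gamma$, a proper hereditary subclass of $\GGG^\alpha$ that is still of unbounded clique-width.

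For the ``if'' direction I would generalise the argument behind Theorem~\ref{thm-0123minim}. A hereditary class of unbounded clique-width is minimal exactly when forbidding any single member collapses the clique-width, so it suffices to prove that for every $H\in\GGG^\alpha$ the subclass of $H$-free members of $\GGG^\alpha$ is bounded. Such an $H$ is captured by a finite factor $w$ of $\alpha$; recurrence makes $w$ occur infinitely often, and the uniform weight bound defining $\Gamma$ makes consecutive occurrences of $w$ separated by bounded weight. Together these let me cut $\alpha$ into blocks of bounded weight each containing a copy of $w$, so that an $H$-free graph cannot realise a full block; this caps the relevant width of its pieces and produces a decomposition into bounded-clique-width parts. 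The key observation is that the proof of Theorem~\ref{thm-0123minim} never uses full almost periodicity: on rereading, the only property it exploits is precisely this uniform weight bound, so the same bookkeeping should go through verbatim for all $\alpha\in\Gamma$.

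For the ``only if'' direction, assume $\alpha$ has infinitely many non-zero letters but $\alpha\notin\Gamma$, and split into the two ways this can occur. If $\alpha$ is not recurrent, some factor $w$ occurs only finitely often; the tail $\beta$ of $\alpha$ beyond the last occurrence of $w$ then avoids $w$ while every factor of $\beta$ remains a factor of $\alpha$, so (from the definition in Section~\ref{prelim}) $\GGG^\beta$ is a hereditary subclass of $\GGG^\alpha$, proper because the subgraph encoded by $w$ lies in $\GGG^\alpha\setminus\GGG^\beta$, and of unbounded clique-width by Theorem~\ref{thm-0123unbound} since $\beta$ keeps infinitely many non-zero letters. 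If instead $\alpha$ is recurrent but fails the weight bound, there is a factor $w$ having consecutive occurrences with arbitrarily large intervening weight; the words strictly between consecutive occurrences of $w$ therefore avoid $w$ yet have unbounded weight. A compactness (König's lemma) argument then extracts a right-infinite limit word $\beta$ all of whose factors appear in $\alpha$ and which avoids $w$, so once more $\GGG^\beta\subsetneq\GGG^\alpha$ with separating graph encoded by $w$; and since the gaps have unbounded weight, $\beta$ can be steered so as to retain infinitely many non-zero letters, making $\GGG^\beta$ unbounded by Theorem~\ref{thm-0123unbound}.

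I expect the main obstacle to be exactly this last extraction: converting ``finite $w$-avoiding factors of unbounded weight'' into a single infinite $w$-avoiding word that still carries infinitely many non-zero letters. A naive limit could drift into an all-zero tail, so the construction must be controlled---for instance by recentring the heavy gaps so their non-zero letters accumulate on one side of the limit---and I would need to check simultaneously that the resulting $\beta$ avoids $w$, has all its factors inside those of $\alpha$, and retains infinitely many non-zero letters. A secondary difficulty is making the ``if'' direction fully rigorous: one must re-derive the bounded-width decomposition of Theorem~\ref{thm-0123minim} phrased in terms of the weight bound rather than almost-periodic gaps, and verify that the clique-width bound obtained is genuinely uniform over all choices of the forbidden graph $H$.
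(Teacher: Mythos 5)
Your ``if'' direction does coincide with the paper's own proof of Theorem~\ref{theorem-recur}: the paper re-runs Lemma~\ref{lemma-minim} with the almost-periodicity constant $\mathcal{L}(\beta)$ replaced by the weight bound $W(\beta)$, and this works precisely because the block bound of Corollary~\ref{cor-H-cwd} depends only on the \emph{weight}, not the length, of the defining factor. The genuine gap is in your ``only if'' direction, in the recurrent case: the K\"onig's-lemma extraction you propose is not merely delicate, it can be impossible, for exactly the reason you fear. Take $w_1=11$, $w_{n+1}=w_n(10^n)^n w_n$, and $\alpha=\lim_n w_n$. This word is recurrent, has infinitely many $1$s, and among the gap factors of the factor $11$ are the words $0^n(10^n)^{n-1}$, of weight $n-1$, so $\alpha\notin\Gamma$. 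But the non-zero letters of these gap factors are spaced $n+1$ apart, so every factor of length $L$ of such a gap factor with $n\ge L$ contains at most one $1$; consequently \emph{every} infinite limit word $\beta$ obtainable from them, however recentred, has weight at most one, and then $\GGG^\beta$ has clique-width at most $9$ by Corollary~\ref{cor-H-cwd}. The same sparseness defeats every other choice of the factor $w$ in this example, so no ``steering'' can rescue the construction. The paper never builds an infinite limit word at all: it fixes the single proper subclass $\Free(H^\alpha_{1,j}(k,k))\cap\GGG^\alpha$, whose properness is automatic, and gets unboundedness directly from the \emph{finite} gap factors of unbounded weight via the analysis of Section~\ref{Sect:Unbounded}, which applies to finite factors. (Your route also relies on properness claims of the form ``the graph encoded by $w$ is not in the tail/limit class''. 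This is an alignment statement that the paper proves only for binary words and three rows, Lemma~\ref{lem-embed}; over $\{0,1,2,3\}$ it can fail outright: $H^{23}_{1,1}(3,3)$ has nested neighbourhoods in its middle column and embeds into six rows of two consecutive columns of $\PPP^{2^\infty}$, so for $\alpha=232^\infty$ your separating graph for the non-recurrent case does not separate.)

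Separately, the statement has four parts and your proposal addresses only part~(d), taking (a) and (b) as given. Part~(c) is nowhere addressed, and it does not follow from part~(b) alone: one must also show that uncountably many of the classes $\GGG^\alpha$ are pairwise \emph{distinct}. The paper does this in Theorem~\ref{cor-uncountable} by taking Sturmian words that are not locally isomorphic and using Lemma~\ref{lem-embed} to convert a factor of $\alpha_1$ that is not a factor of $\alpha_2$ into a graph lying in $\GGG^{\alpha_1}\setminus\GGG^{\alpha_2}$; some argument of this kind is required and is absent. Finally, one small conceptual correction: minimality does not require a clique-width bound that is ``uniform over all choices of the forbidden graph $H$''. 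For each proper subclass one fixes $H$, and a bound depending on $H$ (through $k$ and $W(\beta)$, as in the paper) is exactly what is needed, so the second difficulty you anticipate is not a difficulty at all.
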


Part~(b) of Theorem~\ref{thm-main} establishes that one direction of the conjecture in~\cite{collins:infinitely-many:} is true, although note that we go further by including the additional letter $3$ in the alphabet (which represents a different type of inter-column connection). 
The other direction of their conjecture is false: this follows from part~(d) of Theorem~\ref{thm-main}, which establishes precisely which of the classes, that are defined by words over $\{0,1,2,3\}$ and have unbounded clique-width, are minimal with this property. In Section~\ref{recur} we give an explicit counterexample of a word in $\Gamma$ that is not almost periodic.

The rest of this paper is organised as follows. Section~\ref{prelim} provides some background, and defines key definitions and concepts. Section~\ref{Sect:Unbounded} is devoted to providing that the classes defined over $\{0,1,2,3\}$ in general have unbounded clique-width. This is done from first principles for classes over the alphabet $\{2,3\}$, and then extended to the full four-letter alphabet using rank-width techniques. 

Section~\ref{Sect:Minimal} provides the central proof that if $\alpha$ is an infinite almost periodic word with at least one non-zero letter then $\GGG^\alpha$ is a minimal hereditary class of graphs of unbounded clique-width. To do this we modify the notion of cluster graphs, first used by Lozin~\cite{lozin:minimal-classes:}, and show how this can be used in conjunction with Menger's Theorem to provide an integrated proof of the minimality result. That there are uncountably many distinct minimal hereditary classes of graphs of unbounded clique width follows by considering Sturmian sequences.

Finally, in Section~\ref{recur}, we explore sequences that are recurrent but not almost periodic, and prove the precise characterisation between minimal and non-minimal hereditary classes of graphs of unbounded clique-width.

%
%
%
%
%
\section{Preliminaries} \label{prelim}

A graph $G$ is a pair of sets, vertices $V(G)$ and edges $E(G)\subseteq V(G)\times V(G)$. Unless otherwise stated, all graphs in this paper are simple, i.e. undirected, without loops or multiple edges. We denote $N(v)$ as the neighbourhood of a vertex $v$, that is, the set of vertices adjacent to $v$. 

A set of vertices is \emph{independent} if no two of its elements are adjacent. A graph is \emph{bipartite} if its vertices can be partitioned into two independent sets.

Given a graph $G(V,E)$, a subset $U \subseteq V$ and a vertex $v\in V\setminus U$, we say that $v$ \emph{distinguishes} $U$ if $v$ has both a neighbour and a non-neighbour in $U$. If $U$ is indistinguishable by the vertices outside $U$, we call $U$ a \emph{module}. A module $U$ is \emph{trivial} if $|U|=1$ or $U = V(G)$. A graph, every module of which is trivial, is called \emph{prime}. We denote the set of prime induced subgraphs of $G$, $\Prime(G)$.

We will use the notation $H \le_I G$ to denote graph $H$ is an \emph{induced subgraph} of graph $G$, meaning $H$ can be obtained from $G$ by a sequence of vertex removals. If graph $G$ does not contain the induced subgraph $H$ we say that $G$ is \emph{$H$-free}.

A class of graphs $\CCC$ is \emph{hereditary} if it is closed under taking induced subgraphs, that is $G \in \CCC$ implies $H \in \CCC$ for every induced subgraph $H$ of $G$. It is well known that for any hereditary class $\CCC$ there exists a unique (but not necessarily finite) set of minimal forbidden graphs $\{H_1, H_2, \dots\}$ such that $\CCC= \Free(H_1, H_2, \dots)$.

If $H$ is an induced subgraph of $G$, then this can be witnessed by one or more embeddings, where an \emph{embedding} of $H$ in $G$ is an injective map $\phi:V(H) \rightarrow V(G)$ such that the subgraph of $G$ induced by the vertices $\phi(V(H))$ is isomorphic to $H$. In other words, $vw \in E(H)$ if and only if $\phi(v) \phi(w) \in E(G)$.

%
%
%
%
%
\subsection{Bipartite hereditary graph classes defined by an infinite word} \label{graphdef}

The graph classes we consider are all formed by taking the set of finite induced subgraphs of an infinite graph defined on a grid of vertices. We start by defining an infinite empty graph $\PPP$ with vertices 
\[V(\PPP) = \{v_{i,j} : i, j \in \mathbb{N}\}.\] 
In general, we think of $\PPP$ as an infinite two-dimensional array in which $v_{i,j}$ represents the vertex in the $i$-th row (counting from the left) and $j$-th column (counting from the top). Hence vertex $v_{1,1}$ is in the top left corner of the grid and the grid extends infinitely to the right and downwards. The $j$-th column of $\PPP$ is the set $C_j = \{ v_{i,j} : i \in \mathbb{N}\}$, and the $i$-th row of $\PPP$ is the set $R_i = \{ v_{i,j} : j \in \mathbb{N}\}$.

We will refer to a (finite or infinite) sequence of letters chosen from a finite alphabet as a \emph{word}.  We denote by $\alpha_j$ the $j$-th letter of the word $\alpha$ and we denote $\alpha_j^k$ to be the concatenation of $k$ copies of the letter $\alpha_j$. A \emph{factor} of $\alpha$ is a contiguous subword of $\alpha$. The \emph{length} of a word $\alpha$ is the number of letters the word contains, while the \emph{weight} of $\alpha$ is the number of non-zero letters it has, which we will denote $|\alpha|_1$.

An infinite word $\alpha$ is \emph{recurrent} if each of its factors occurs in it infinitely many times. We say that $\alpha$ is \emph{almost periodic} (sometimes called \emph{uniformly recurrent} or \emph{minimal}) if for each factor $\beta$ of $\alpha$ there exists a constant $\mathcal{L}(\beta)$ such that every factor of $\alpha$ of length at least $\mathcal{L}(\beta)$ contains $\beta$ as a factor. Finally, $\alpha$ is \emph{periodic} if there is a positive integer $p$ such that $\alpha_k=\alpha_{k+p}$ for all $k$. Clearly, every periodic word is almost periodic, and every almost periodic word is recurrent. 

Let $\alpha$ be an infinite word such that $\alpha_j \in \{0,1,2,3\}$ for each natural $j$. We define a family of infinite graphs $\{\PPP^\alpha\}$ with vertices $V(\PPP)$, and with edges between consecutive columns $C_j, C_{j+1}$ of $V(\PPP^\alpha)$, such edges determined by the letters of the word $\alpha$.
\begin{enumerate}[label=(\roman*)]
\item If $\alpha_j=0$ then the edges between $C_j$ and $C_{j+1}$ are given by  $\{(v_{i,j}, v_{i,j+1}) : i \in \mathbb{N}\}$ (i.e.\ a matching).
\item If $\alpha_j=1$ then the edges between $C_j$ and $C_{j+1}$ are given by  $\{(v_{i,j}, v_{k,j+1}) : i \neq k; i,k \in \mathbb{N}\}$ (i.e.\ the complement of a matching).
\item If $\alpha_j=2$ then the edges between $C_j$ and $C_{j+1}$ are given by  $\{(v_{i,j}, v_{k,j+1}) : i \le k; i,k \in \mathbb{N}\}$.
\item If $\alpha_j=3$ then the edges between $C_j$ and $C_{j+1}$ are given by  $\{(v_{i,j}, v_{k,j+1}) : i \ge k; i,k \in \mathbb{N}\}$.
\end{enumerate}
This notation matches and extends that used in~\cite{collins:infinitely-many:}.

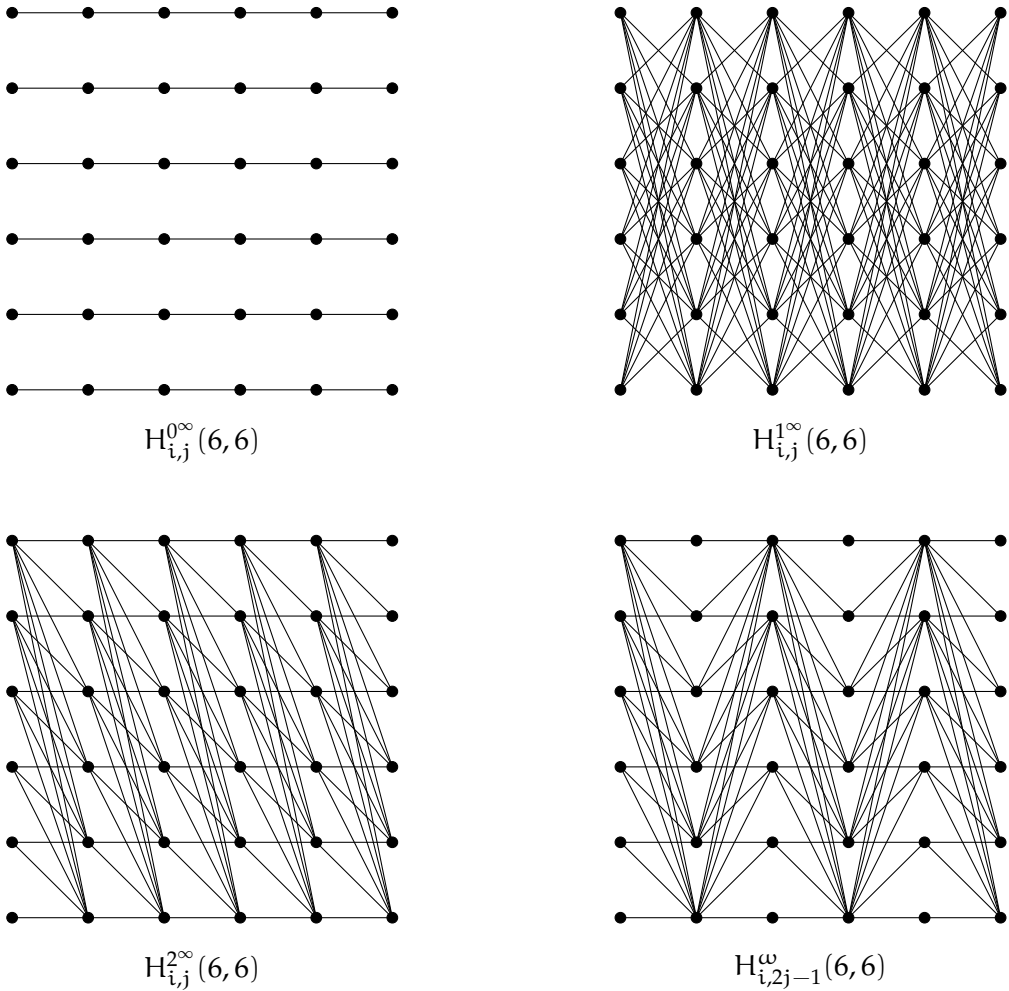
\begin{figure}[ht]
{\centering
\begin{tikzpicture}[scale=1]
	\mnnodearray{6}{6}
		\foreach \col in {1,2,3,4,5}
			\horiz{\col}{6};
		\node[draw=none,fill=none] at (3.5,0.3) {$H^{0^\infty}_{i,j}(6,6)$};
	\begin{scope}[shift={(8,0)}]
	\mnnodearray{6}{6}
		\foreach \col in {1,2,3,4,5}
			\downnohoriz{\col}{6};
		\foreach \col in {1,2,3,4,5}
			\upnohoriz{\col}{6};
		\node[draw=none,fill=none] at (3.5,0.3) {$H^{1^\infty}_{i,j}(6,6)$};
	\end{scope}
	\begin{scope}[shift={(0,-7)}]
	\mnnodearray{6}{6}
		\foreach \col in {1,2,3,4,5}
			\downhoriz{\col}{6};
		\node[draw=none,fill=none] at (3.5,0.3) {$H^{2^\infty}_{i,j}(6,6)$};
	\end{scope}		
	\begin{scope}[shift={(8,-7)}]
	\mnnodearray{6}{6}
		\foreach \col in {1,3,5}
			\downhoriz{\col}{6};
		\foreach \col in {2,4}
			\uphoriz{\col}{6};
		\node[draw=none,fill=none] at (3.5,0.3) {$H^{\omega}_{i,2j-1}(6,6)$};
	\end{scope}
\end{tikzpicture}\par }
\caption{Bipartite graphs defined on a $6 \times 6$ square grid.}
	\label{fig:squares}
\end{figure}

For ease of reference we will denote $H^\alpha_{i,j}(m,n)$ as the $m \times n$ induced subgraph of $\PPP^\alpha$ formed from the vertices 
$\{v_{x,y} : x = i, i+1, \dots , i+m-1, y = j, j+1, \dots , j+n-1\}$. See Figure~\ref{fig:squares}.

Let $\GGG^\alpha$ denote the class of all finite induced subgraphs of $\PPP^\alpha$. By definition, $\GGG^\alpha$ is a hereditary class, and any graph $G\in\GGG^\alpha$ can be witnessed by an embedding into the infinite graph $\PPP^\alpha$. Given such an embedding of $G$ into $\PPP^\alpha$, we will be especially interested in the induced subgraphs of $G$ that occur in two adjacent columns: an \emph{$\alpha_j$-link} is the induced subgraph of $G$ on the vertices of $G\cap (C_j\cup C_{j+1})$, and will be denoted by $G_j$.

Letting $2^\infty$ stand for the infinite word of all $2$s, we note that $\GGG^{2^\infty}$ is the class of \emph{bipartite permutation} graphs. Note, too, that $\GGG^{3^\infty}=\GGG^{2^\infty}$ (this can be seen by considering a vertical reflection of the 2-dimensional array), so it is not necessarily the case that two words $\alpha$ and $\beta$ give rise to distinct hereditary classes. 

On the other hand, the word with alternating 2s and 3s, which throughout we will denote by $\omega=232323\cdots$, defines the class $\GGG^\omega$ which is distinct from $\GGG^{2^\infty}$. Indeed, the graph,\par
{\centering\begin{tikzpicture}[scale=0.5]
	\node (A) at (0,0) {};
	\node (B) at (-1,1) {};
	\node (C) at (1,1) {};
	\node (D) at (0,2) {};
	\node (E) at (0,3.3) {};
	\node (F) at (-2.3,1) {};
	\node (G) at (2.3,1) {};
	\draw (F) -- (B) -- (A) -- (C) -- (G)  (B) -- (D) -- (C) (D) -- (E);  
\end{tikzpicture}\par}
embeds in $\PPP^\omega$, but is one of the minimal forbidden induced subgraphs of permutation graphs.


%
%
%
%
%
\subsection{Clique-width and rank-width}\label{cliquerank}

Clique-width is a graph width parameter introduced by Courcelle, Engelfriet and Rozenberg in the 1990s~\cite{courcelle:handle-rewritin:}. A recent survey of clique-width for hereditary graph classes is~\cite{dabrowski:cliquewidth}. The clique-width of a graph is denoted $\cwd(G)$ and is defined as the the minimum number of labels needed to construct $G$ by means of the following four graph operations:

\begin{enumerate}[label=(\alph*)]
\item creation of a new vertex $v$ with label $i$ (denoted $i(v)$),
\item taking the disjoint union of two previously-constructed labelled graphs $G$ and $H$ (denoted $G \oplus H$),
\item adding an edge between every vertex labelled $i$ and every vertex labelled $j$ for distinct $i$ and $j$ (denoted $\eta_{i,j}$) and
\item giving all vertices labelled $i$ the label $j$ (denoted $\rho_{i \rightarrow j}$).
\end{enumerate}
Every graph can be defined by an algebraic expression $\tau$ using the four operations above, which we will refer to as a \emph{clique-width expression}. This expression is called a $k$-expression if it uses $k$ different labels.

Alternatively, any clique-width expression $\tau$ defining $G$ can be represented as a rooted tree, $\tree(\tau)$, whose leaves correspond to the operations of vertex creation, the internal nodes correspond to the $\oplus$-operation, and the root is associated with $G$. The operations $\eta$ and $\rho$ are assigned to the respective edges of  $\tree(\tau)$.

A related parameter is that of rank-width, which was introduced by Oum and Seymour in 2006 \cite{oum:approximating-c:}. They showed that the measures are closely related through the inequality
\[\rwd(G)\le \cwd(G)\le 2^{\rwd(G)+1}-1 \]
so that a graph class has bounded clique-width if and only if it has bounded rank-width.

For a graph $G$ and a vertex $v$, the \emph{local complementation} at $v$ is the operation that replaces the subgraph induced by the neighbourhood of $v$ with its complement. For a graph $G$  and an edge $vw$, the graph obtained by \emph{pivoting} $vw$  is the graph obtained by applying local complementation  at $v$, then at $w$ and then at $v$ again. When $G$ is bipartite, Oum showed in \cite{oum:rank-width-and:} that this is equivalent to complementing the edges between $N(v)\backslash w$ and $N(w)\backslash v$. 

We will use the notation $H \le_V G$ to denote graph $H$ is a \emph{vertex-minor} of graph $G$, meaning $H$ can be obtained from $G$ by a sequence of vertex removals and local complementations. A useful result is the following.
\begin{lemma}[Oum~\cite{oum:rank-width-and:}]\label{vminor}
If $H \le_V G$ then $\rwd(H) \le \rwd(G)$.
\end{lemma}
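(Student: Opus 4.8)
The plan is to prove the statement the way it is done in Oum's original work, by showing that neither of the two operations used to build a vertex-minor---vertex deletion and local complementation---can increase rank-width. Recall that rank-width is defined through the \emph{cut-rank} function: for a graph $G$ on vertex set $V$ and a set $X\subseteq V$, let $\rho_G(X)$ denote the rank over $\mathbb{F}_2$ of the $X\times(V\setminus X)$ matrix $M$ whose entry $M_{a,b}$ equals $1$ precisely when $ab\in E(G)$; the rank-width of $G$ is the least width of any rank-decomposition (a subcubic tree whose leaves are labelled by $V$), where the width of an edge of the tree is the cut-rank of the bipartition of $V$ that it induces. Since $H\le_V G$ means $H$ is obtained from $G$ by finitely many vertex deletions and local complementations, it suffices to show that each single operation leaves the rank-width unchanged or smaller; the result then follows by induction on the length of the sequence.

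For vertex deletion the argument is immediate. Given a rank-decomposition of $G$ of width $\rwd(G)$, delete the leaf labelled by the removed vertex $v$ and suppress the resulting degree-two node. Each edge of the new tree induces a bipartition of $V\setminus\{v\}$ obtained from a bipartition of $V$ by discarding $v$, so its cut-rank matrix is a submatrix (with one row or column deleted) of the corresponding matrix for $G$. Deleting a row or column cannot increase the $\mathbb{F}_2$-rank, so every edge width can only decrease, whence $\rwd(G-v)\le\rwd(G)$.

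The substantive step is local complementation, where I claim that $\rho_G(X)=\rho_{G*v}(X)$ for \emph{every} $X\subseteq V$, so that any rank-decomposition of $G$ is simultaneously one of $G*v$ of the same width. Fix $X$ and suppose $v\in X$ (the case $v\in V\setminus X$ is symmetric). Local complementation at $v$ complements exactly those edges joining two neighbours of $v$, so across the cut it flips $M_{a,b}$ precisely when $a\in N(v)\cap X$ and $b\in N(v)\setminus X$. Writing the change as a matrix, its $(a,b)$ entry equals $(\mathbf{1}_{N(v)\cap X})_a\cdot(\mathbf{1}_{N(v)\setminus X})_b$, and the crucial observation is that the row vector $\mathbf{1}_{N(v)\setminus X}$ is exactly the row of $M$ indexed by $v$ (since $b$ lies in $N(v)\setminus X$ iff $M_{v,b}=1$). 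Therefore the new matrix is obtained from $M$ by adding the row indexed by $v$ to each row indexed by a vertex of $N(v)\cap X$; because $v\notin N(v)$, the $v$-th row itself is untouched, so this is a sequence of elementary row operations over $\mathbb{F}_2$, which preserves rank. Hence $\rho_{G*v}(X)=\rho_G(X)$.

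Combining the three observations, every edge of a fixed rank-decomposition keeps (or lowers) its width under each allowed operation, so rank-width is non-increasing along the sequence producing $H$ from $G$, giving $\rwd(H)\le\rwd(G)$. The only delicate point is the local-complementation step: the naive bound merely records that the cut-rank matrix changes by the rank-one correction $\mathbf{1}_{N(v)\cap X}\,\mathbf{1}_{N(v)\setminus X}^{\top}$ and so moves by at most one, and the real content is recognising that this correction is a genuine elementary row operation rather than an arbitrary rank-one perturbation---this is what upgrades ``changes by at most one'' to exact invariance.
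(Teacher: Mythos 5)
Your proof is correct. Note that the paper itself offers no proof of this statement---it is quoted as a known result from Oum's paper---and your argument is essentially the one in that cited source: vertex deletion restricts each cut-rank matrix to a submatrix, and local complementation preserves every cut-rank exactly, because the rank-one correction $\mathbf{1}_{N(v)\cap X}\,\mathbf{1}_{N(v)\setminus X}^{\top}$ is precisely the effect of adding row $v$ of the cut matrix to the rows indexed by $N(v)\cap X$, a sequence of elementary row operations over $\mathbb{F}_2$ (valid since $v\notin N(v)$). Your closing remark correctly identifies the key point: it is the exact invariance $\rho_{G*v}(X)=\rho_G(X)$, not merely a change of at most one per operation, that makes the induction over an arbitrarily long sequence of local complementations go through.
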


%
%
%
%
%
%
\section{Graph classes with unbounded clique-width} \label{Sect:Unbounded}

In this section we show that the graph classes $\{\GGG^\alpha\}$, where $\alpha$ is an infinite word over $\{0,1,2,3\}$ that contains infinitely many letters from $\{1,2,3\}$, have unbounded clique-width. We will extend the methods of Golumbic and Rotics~\cite{golumbic:on-the-clique-width:} and Brandst\"adt and Lozin~\cite{brandstadt:clique-width:}, the latter of which proved that the clique-width of the class of bipartite permutation graphs (i.e.\ $\GGG^{2^\infty}$) is unbounded. We deal with $\{2,3\}$ graphs first as these are susceptible to direct proof methods. We can then use rank-width/local complementation techniques following Collins et al \cite{collins:infinitely-many:} to show that the more complex $\{0,1,2,3\}$ graphs have a suitable $\{2,3\}$ vertex-minor to prove that these too have unbounded clique-width. 

In the case of binary words, the following result covers what we require.
\begin{lemma}[Collins, Foniok, Korpelainen, Lozin and Zamaraev~\cite{collins:infinitely-many:}] \label{lem-01unbound} 
If $\alpha$ is an infinite binary word containing infinitely many $1$s then the graph class $\GGG^\alpha$ has unbounded clique-width.
\end{lemma}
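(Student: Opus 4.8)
The plan is to produce, for every $k$, a graph in $\GGG^\alpha$ whose clique-width exceeds $k$; since $\GGG^\alpha$ is hereditary, this establishes unboundedness. I would follow the balanced-cut counting method of Brandst\"adt and Lozin~\cite{brandstadt:clique-width:}, itself an adaptation of Golumbic and Rotics~\cite{golumbic:on-the-clique-width:}. Recall its engine: if $\cwd(G)\le k$, then for a fixed $k$-expression $\tau$ with tree $\tree(\tau)$ there is an edge whose removal partitions $V(G)$ into sets $V_1,V_2$ with $|V_1|,|V_2|\ge |V(G)|/3$, and at that edge every vertex of $V_1$ carries one of only $k$ labels. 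Since all later edge-additions act on labels alone, two vertices of $V_1$ that share a label have identical neighbourhoods inside $V_2$, so the number of distinct traces $\{N(u)\cap V_2 : u\in V_1\}$ is at most $k$. The whole argument then reduces to exhibiting graphs in which \emph{every} sufficiently balanced partition forces this count to grow without bound.

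For the construction, since $\alpha$ has infinitely many $1$s, let $p_1<\dots<p_n$ be the positions of the first $n$ letters equal to $1$, and set $G_n=H^\alpha_{1,1}(n,p_n+1)$, the induced subgraph of $\PPP^\alpha$ on rows $1,\dots,n$ and columns $1,\dots,p_n+1$. Each junction with $\alpha_j=0$ contributes a matching $v_{i,j}\sim v_{i,j+1}$, which merely transports row indices unchanged, whereas each of the $n$ junctions with $\alpha_{p_t}=1$ contributes a co-matching $v_{i,p_t}\sim v_{k,p_t+1}$ for $i\neq k$. The essential adjacency complexity therefore comes from the $n$ co-matchings, with the intervening $0$-columns acting as ``transport'' paths. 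I would make this precise by contracting each maximal block of matching edges -- formally, by repeatedly pivoting on a matching edge and deleting a resulting redundant vertex, an operation that produces a vertex-minor and hence, by Lemma~\ref{vminor}, cannot increase rank-width -- thereby reducing $G_n$ to a graph built purely from $n$ stacked co-matchings, i.e.\ an induced copy of the $H^{1^\infty}_{1,1}(n,n)$ grid.

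It then remains to show that the co-matching grid has rank-width, and hence (by the Oum--Seymour inequalities~\cite{oum:approximating-c:}) clique-width, tending to infinity with $n$. Working over $\mathbb{F}_2$, the adjacency between consecutive columns is the matrix $J-I$, of rank $n$ or $n-1$; I would show that for any balanced vertex partition $(A,\bar A)$ the cut submatrix inherits a large rank from these nearly-full-rank column blocks, using that a balanced partition must split many column-pairs unevenly. Equivalently, in the clique-width language, a balanced partition of the $n\times n$ grid leaves $\Omega(n)$ rows straddling the cut in distinct ways, producing $\Omega(n)$ distinct neighbourhood traces across it -- this is the step that directly generalises the bipartite-permutation argument. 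Combined with the engine of the first paragraph, this yields $\cwd(G_n)=\Omega(n)$, and the lemma follows.

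The main obstacle is the adversarial nature of the partition: the diversity (or cut-rank) lower bound must hold for \emph{every} split with both sides of size at least $|V(G_n)|/3$, not merely the column-aligned ones, and the interspersed $0$-columns inflate the vertex count and must be shown not to let an adversary cheaply balance the cut while keeping neighbourhood diversity low. Controlling this -- either by performing the vertex-minor contraction that removes the $0$-columns \emph{before} counting, or by a direct accounting showing the transport paths cannot absorb enough imbalance -- is where the real work lies.
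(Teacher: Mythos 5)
There is no internal proof to compare against here: the paper imports this lemma wholesale from Collins et al.~\cite{collins:infinitely-many:}, and its own lower-bound machinery in Section~\ref{Sect:Unbounded} (Lemmas~\ref{Womega} and~\ref{Walpha}) is built only for words over $\{2,3\}$, precisely because the binary case is outsourced. Judged on its own, your proposal has a reasonable architecture --- reduce to the all-ones word by vertex-minor operations, then prove a lower bound for the co-matching grid --- but the second step, which is the entire content of the lemma, is missing. You assert that every balanced partition of $H^{1^\infty}_{1,1}(n,n)$ yields a number of distinct neighbourhood traces (equivalently, a cut-rank) growing linearly in $n$, and that this ``directly generalises the bipartite-permutation argument''. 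It does not: the Brandst\"adt--Lozin count for $2$-junctions exploits the nested, linearly ordered neighbourhoods across a $2$-link, whereas across a $1$-link every vertex is adjacent to all but one vertex of the adjacent column, so two same-column vertices have traces differing in exactly two elements, and the whole adversarial-partition analysis must be redone with different combinatorics. (This is visible in the present paper: the $\{2,3\}$ case requires the long case analysis of Lemma~\ref{Walpha}, and no one claims it covers $\{0,1\}$.) Until that counting argument is actually carried out, you have a plan, not a proof --- as your own final paragraph concedes.

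The reduction step is also flawed as described. Pivoting on a matching edge and deleting its endpoints acts as a clean ``contraction'' only deep inside a block of $0$s, where both endpoints have degree two. At the boundary --- removing the last $0$ adjacent to a $1$-junction, the step you cannot avoid --- an endpoint of the matching edge is adjacent to all but one vertex of a flanking column, so a single pivot complements the edges between two nearly complete columns, creating long-range complete-bipartite adjacencies rather than ``transporting row indices''; successive row-by-row pivots then interact (each re-complements pairs affected by the previous ones), and the net outcome depends on the parity of the number of rows. The correct tools are the $0$-removal operations of Collins et al.\ reproduced in Section~\ref{Sect:Unbounded} (column-wise local complementation with a parity adjustment), and since each such removal can cost rows, your $G_n$ with exactly $n$ rows is too small: the row count must grow with the number of zeros removed, as in Observation~\ref{obs4} and Lemma~\ref{lem-0123unbound}. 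The issues in this paragraph are repairable by citing the published operations; the missing lower bound for the co-matching grid in the first paragraph is the genuine gap.
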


%
%
%
%
%
\subsection{\texorpdfstring{$\{2,3\}$}{\{2,3\}} graph classes with unbounded clique-width}

To assist with determining the clique-width of these graph classes, it is helpful to consider a sequence $\{W^\alpha_n\}$ of graphs which are constructed as follows.

The vertices of $W^\alpha_n$ are an $n \times n$ array with vertex $u_{i,j}$ in row i and column j. These vertices are partitioned according to the diagonal they are on, such that $u_{i,j}$ is on diagonal $D_{i+j-1}$. Hence, $D_1=\{u_{1,1}\}$, $D_2=\{u_{2,1},u_{1,2}\}$, and so on.

$W^\alpha_n$ has an edge $(u_{i,j},u_{k,l})$ whenever $u_{i,j} \in D_{m}$, $u_{k,l} \in D_{m+1}$ and either :
\begin{enumerate}[label=(\alph*)]
\item $\alpha_{m}=2$ and $k \ge i$, or
\item $\alpha_{m}=3$ and $l \ge j$.
\end{enumerate}
Observe that these edges always create an $n \times n$ square grid. Furthermore, the edges between two consecutive diagonals $D_m$ and $D_{m+1}$ of $W^\alpha_n$ correspond to the edges between two consecutive columns $C_m$ and $C_{m+1}$ of $\PPP^\alpha$.

\begin{lemma}\label{Wn}
For all $n \in \mathbb{N}$, $W^\alpha_n$ can be embedded into $\PPP^\alpha$.
\end{lemma}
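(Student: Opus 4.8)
The plan is to exhibit an explicit embedding $\phi\colon V(W^\alpha_n)\to V(\PPP^\alpha)$ that sends the $m$-th diagonal $D_m$ into the $m$-th column $C_m$, exploiting the observation (already noted before the statement) that consecutive diagonals of $W^\alpha_n$ behave exactly like consecutive columns of $\PPP^\alpha$. Since a vertex $u_{i,j}\in D_m$ satisfies $i+j=m+1$, it is determined within its diagonal by its row index $i$; so it suffices to specify, for each diagonal, an injection $\rho_m$ from its row indices into the rows $\mathbb{N}$ of column $C_m$, and then set $\phi(u_{i,j})=v_{\rho_m(i),m}$.

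First I would record the two facts that reduce the problem to a pairwise check: $W^\alpha_n$ has edges only between consecutive diagonals and $\PPP^\alpha$ has edges only between consecutive columns (neither has edges inside a single diagonal or column), so it is enough to match adjacency across each consecutive pair $D_m,D_{m+1}$. For a pair $u_{i,j}\in D_m$ and $u_{k,l}\in D_{m+1}$, rewriting the defining conditions of $W^\alpha_n$ using $i+j=m+1$ and $k+l=m+2$ shows that the edge rule is ``$k\ge i$'' when $\alpha_m=2$, and ``$k\le i+1$'' when $\alpha_m=3$ (the latter coming from $l\ge j$). In $\PPP^\alpha$ the relevant rule is $\rho_m(i)\le\rho_{m+1}(k)$ when $\alpha_m=2$ and $\rho_m(i)\ge\rho_{m+1}(k)$ when $\alpha_m=3$.

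This comparison makes the naive guess $\rho_m(i)=i$ \emph{almost} work, and the only subtlety is an off-by-one: for a letter $3$ the threshold is $k\le i+1$ rather than $k\le i$. I would absorb this by an affine shift, setting $\rho_m(i)=i-t_m+N_0$, where $t_m$ is the number of $3$'s among $\alpha_1,\dots,\alpha_{m-1}$ and $N_0$ is a global constant (e.g.\ $N_0=2n$) chosen so that all rows stay positive, since $t_m\le 2n-2$. Each $\rho_m$ is strictly increasing in $i$, hence injective, and distinct diagonals land in distinct columns, so $\phi$ is injective. The verification then splits into two cases: when $\alpha_m=2$ we have $t_{m+1}=t_m$, so $\rho_m(i)\le\rho_{m+1}(k)\iff i\le k$; when $\alpha_m=3$ we have $t_{m+1}=t_m+1$, so $\rho_m(i)\ge\rho_{m+1}(k)\iff k\le i+1$. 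In both cases this is exactly the edge rule of $W^\alpha_n$, and since the equivalences are exact, non-edges are preserved as well; hence $\phi$ is an embedding and $W^\alpha_n\le_I\PPP^\alpha$.

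The step I expect to require the most care is pinning down this shift and confirming that a single choice of $\rho_m$ simultaneously realises the link to the left (governed by $\alpha_{m-1}$) and the link to the right (governed by $\alpha_m$). The key point to verify cleanly is that ordering every diagonal by increasing row index $i$ is globally consistent for both letters $2$ and $3$, so that only the additive thresholds, and not the orderings, differ between them; this is precisely what allows the single accumulating shift $t_m$ to work, and once it is checked the embedding is immediate.
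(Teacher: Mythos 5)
Your proposal is correct and is essentially the paper's own proof: the paper's embedding also sends diagonal $D_m$ to column $C_m$ and sets the row index to $x + (n-1) - t_m$ where $t_m$ counts the $3$s among $\alpha_1,\dots,\alpha_{m-1}$, which is exactly your $\rho_m$ with a different (tighter) additive constant. The only difference is presentational: the paper states the formula and asserts the isomorphism ``can be seen,'' whereas you spell out the threshold computation ($k\ge i$ versus $k\le i+1$) and the two-case check that the paper leaves implicit.
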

\begin{proof}
For each vertex $u_{x,y}$ of $W^\alpha_n$ we will identify a vertex $v_{i,j}$ of $\PPP^\alpha$ so that we can define an injective map $\phi: V(W^\alpha_n) \rightarrow V(\PPP^\alpha)$ such that $\phi(u_{x,y})=v_{i,j}$ as follows:  

If $(x,y) = (1,1)$ then $(i,j)=(n,1)$, otherwise $(i,j)$ is given by:
\begin{align*}
&i=n+x-1-\sum_{m=1}^{x+y-2} \mathbf{1}_m \\
&j=x+y-1
\end{align*}
where 
\[
\mathbf{1}_m =
\begin{cases}
1&\text{ if } \alpha_m=3,\\
0&\text{ if } \alpha_m=2.
\end{cases}
\]

It can be seen that the subgraph of $\PPP^\alpha$ induced by the vertices $\phi(V(W^\alpha_n))$ is isomorphic to $W^\alpha_n$. 
\end{proof}
An example of embedding $W^\omega_5$ is shown in Figure \ref{fig:embed}.

\newcommand{\mnnodearrayfat}[2]{ 
\foreach \i in {1,...,#1}
	\foreach \j in {1,...,#2}
		\node at (\i,\j) [vertex2] {};
}

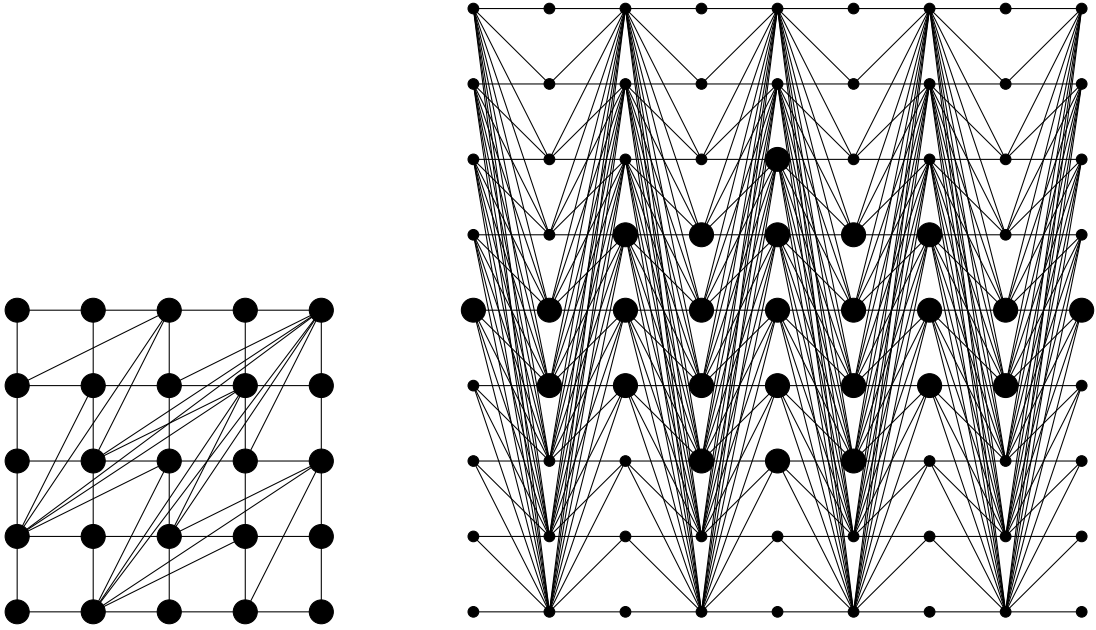
\begin{figure}\centering

\begin{tikzpicture}[scale=1,
	vertex2/.style={circle,draw,minimum size=9,fill=black},]

	\mnnodearrayfat {5}{5}
		\draw (1,5) -- (2,5);	\draw (1,5) -- (1,4);
		\draw (1,4) -- (1,3);	\draw (1,4) -- (2,4);
		\draw (1,4) -- (3,5);	\draw (2,5) -- (2,4);
		\draw (2,5) -- (3,5);	\draw (3,5) -- (4,5);
		\draw (3,5) -- (3,4);	\draw (3,5) -- (2,3);
		\draw (3,5) -- (1,2);	\draw (2,4) -- (3,4);
		\draw (2,4) -- (2,3);	\draw (2,4) -- (1,2);
		\draw (1,3) -- (2,3);	\draw (1,3) -- (1,2);
		\draw (1,2) -- (1,1);	\draw (1,2) -- (2,2);
		\draw (1,2) -- (3,3);	\draw (1,2) -- (4,4);
		\draw (1,2) -- (5,5);	\draw (2,3) -- (2,2);
		\draw (2,3) -- (3,3);	\draw (2,3) -- (4,4);
		\draw (2,3) -- (5,5);	\draw (3,4) -- (3,3);
		\draw (3,4) -- (4,4);	\draw (3,4) -- (5,5);
		\draw (4,5) -- (4,4);	\draw (4,5) -- (5,5);
		\draw (5,5) -- (5,4);	\draw (5,5) -- (4,3);
		\draw (5,5) -- (3,2);	\draw (5,5) -- (2,1);
		\draw (4,4) -- (5,4);	\draw (4,4) -- (4,3);
		\draw (4,4) -- (3,2);	\draw (4,4) -- (2,1);
		\draw (3,3) -- (4,3);	\draw (3,3) -- (3,2);
		\draw (3,3) -- (2,1);	\draw (2,2) -- (3,2);
		\draw (2,2) -- (2,1);	\draw (1,1) -- (2,1);
		\draw (2,1) -- (3,1);	\draw (2,1) -- (4,2);
		\draw (2,1) -- (5,3);	\draw (3,2) -- (3,1);
		\draw (3,2) -- (4,2);	\draw (3,2) -- (5,3);
		\draw (4,3) -- (4,2);	\draw (4,3) -- (5,3);
		\draw (5,4) -- (5,3);	\draw (5,3) -- (5,2);	
		\draw (5,3) -- (4,1);	\draw (4,2) -- (5,2);	
		\draw (4,2) -- (4,1);	\draw (3,1) -- (4,1);	
		\draw (4,1) -- (5,1);	\draw (5,2) -- (5,1);	
	
	\begin{scope}[shift={(6,0)}]
	
	\mnnodearray{9}{9}
		\foreach \col in {1,3,5,7}
			\downhoriz{\col}{9};
		\foreach \col in {2,4,6,8}
			\uphoriz{\col}{9};
	
	\node at (1,5) [vertex2]{};
	\node at (2,5) [vertex2]{};\node at (2,4) [vertex2]{};
	\node at (3,4) [vertex2]{};\node at (3,5) [vertex2]{};
	\node at (3,6) [vertex2]{};
	\node at (4,6) [vertex2]{};\node at (4,5) [vertex2]{};
	\node at (4,4) [vertex2]{};\node at (4,3) [vertex2]{};
	\node at (5,3) [vertex2]{};\node at (5,4) [vertex2]{};
	\node at (5,5) [vertex2]{};\node at (5,6) [vertex2]{};
	\node at (5,7) [vertex2]{};
	\node at (6,6) [vertex2]{};\node at (6,5) [vertex2]{};
	\node at (6,4) [vertex2]{};\node at (6,3) [vertex2]{};
	\node at (7,4) [vertex2]{};\node at (7,5) [vertex2]{};
	\node at (7,6) [vertex2]{};
	\node at (8,5) [vertex2]{};\node at (8,4) [vertex2]{};
	\node at (9,5) [vertex2]{};
			
	\end{scope}
	
\end{tikzpicture}
\caption{Embedding $W^\omega_5$ in $H^\omega_{1,1}(9,9)$ (shown by large vertices)}
	\label{fig:embed}

\end{figure}

We will now calculate a lower bound for the clique-width of $W^\alpha_n$ by demonstrating a minimum number of labels needed to construct $W^\alpha_n$ using the allowed four graph operations.

Let $\tau$ be a clique-width expression defining $W^\alpha_n$ and $\tree(\tau)$ the rooted tree representing $\tau$. The subtree of $\tree(\tau)$ rooted at a node $x$ will be denoted $\tree(x,\tau)$. This subtree corresponds to a subgraph of $W^\alpha_n$ we will call $W_x$. 

Let $a$ be the lowest $\oplus$ node in $\tree(\tau)$ such that $W_a$  contains a full row and a full column of $W^\alpha_n$. We denote by $r$ and $b$ the two children of $a$ in $\tree(\tau)$. Let us colour the vertices of $W_r$ and $W_b$ red and blue, respectively, and all the other vertices in $W^\alpha_n$ white. We let $\Colour(v)$ denote the colour of a vertex $v$ as described above, and $\Label(v)$ denote the label of vertex $v$ (if any) at node $a$. 

We assume that there is neither a blue nor a red column in $W^\alpha_n$. For if $W^\alpha_n$ contains a blue column then obviously it cannot contain a red row, and it cannot contain a blue row due to the choice of $a$.  Likewise, if $W^\alpha_n$ contains a red column. Hence, if it does include a blue or red column, as a consequence of the symmetry of $W^\alpha_n$, we can apply similar types of argument to those that follow to the rows instead of the columns to deliver the same result. 

\begin{obs}\label{obs1}
Suppose $u$, $v$, $w$ are three vertices such that $u$ and $v$ are non-white, $ uw \in  E(W^\alpha_n)$ but $ vw \notin  E(W^\alpha_n)$, and $\Label(w)\neq \Label(u)$. Then $u$ and $v$ must have different labels at node $a$ because the edge $uw$ still needs to be created, whilst respecting the non-adjacency of $v$ and $w$.
\end{obs}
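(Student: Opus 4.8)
The plan is to argue by contradiction. Suppose $u$ and $v$ carry the \emph{same} label at node $a$; I will show that no vertex can then distinguish them in the way required, contradicting $uw\in E(W^\alpha_n)$ and $vw\notin E(W^\alpha_n)$. The engine is a standard invariance property of the four clique-width operations: if two already-created vertices share a label at some node of $\tree(\tau)$, they continue to share a label at every ancestor on the path to the root. Indeed, $\oplus$ leaves existing labels untouched, $\eta_{i,j}$ changes no labels at all, and a relabelling $\rho_{i\to j}$ moves the two vertices in lockstep (either both carry label $i$ and both become $j$, or neither does). Since $u$ and $v$ are non-white they lie in $W_a$ and so are present and labelled at $a$; under the assumption $\Label(u)=\Label(v)$, invariance then gives that $u$ and $v$ carry a common label at every ancestor of $a$.

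Next I would track the single operation that first creates the edge $uw$. Because edges are only ever added and $\eta_{i,j}$ inserts \emph{all} edges between the classes labelled $i$ and $j$, the edge $uw$ is produced by one operation $\eta_{p,q}$, at the instant of which $u$ carries label $p$, $w$ carries label $q$, and $p\neq q$. The key claim is that this operation lies strictly above $a$, i.e.\ that $uw$ is not yet present in the labelled graph $W_a$ — this is exactly what is meant by saying the edge ``still needs to be created''. Granting this, at the moment $\eta_{p,q}$ is applied the invariance property forces $v$ to also carry label $p$; moreover $v$ already exists (it lies in $W_a$, hence is present at every ancestor of $a$) and $w$ carries label $q\neq p$. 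Therefore the same $\eta_{p,q}$ creates the edge $vw$, so $vw\in E(W^\alpha_n)$, contradicting $vw\notin E(W^\alpha_n)$. Hence $\Label(u)\neq\Label(v)$, as required.

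The step I expect to require the most care is the claim that $uw$ is genuinely created above $a$, equivalently that $uw\notin E(W_a)$. In the situations where this observation is applied $w$ will be a white vertex, and then the claim is immediate: $w$ is not present at node $a$, so no edge incident to $w$ can yet exist and $uw$ must be installed by an $\eta$-operation at some ancestor of $a$. The only delicate possibility is a non-white $w$, for which one must rule out that an $\eta$ applied inside the subtree $\tree(a,\tau)$ already produced $uw$ while leaving $vw$ absent; this is precisely where the hypothesis $\Label(w)\neq\Label(u)$ is brought to bear, and it is the one genuinely non-formal point in the argument, the invariance and edge-creation steps being otherwise routine.
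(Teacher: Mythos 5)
Your core argument---same-label invariance up the tree, plus the fact that the edge $uw$ must be inserted by a single $\eta_{p,q}$ applied above $a$, which would then simultaneously create $vw$---is exactly the argument the paper intends (the paper compresses it into the clause ``because the edge $uw$ still needs to be created, whilst respecting the non-adjacency of $v$ and $w$''). The gap is in the one step you flag yourself: why $uw\notin E(W_a)$. Both of your proposed resolutions fail. First, in this paper's applications $w$ is \emph{not} necessarily white: the observation is invoked for triples $(u,v,w)=(y_{j_k},y_{j_i},x_{j_k})$, and $x_{j_k}$ is defined only as a nearest vertex whose \emph{colour} differs from that of $v_{r,j_k}$---it may be white, red or blue. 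Second, the hypothesis $\Label(w)\neq\Label(u)$ cannot rule out that $uw$ was created inside $\tree(a,\tau)$: inside $W_r$ one can create $u$ with label $1$ and $w$ with label $2$, apply $\eta_{1,2}$, and never merge their labels, so that at $a$ the edge is already present \emph{and} the labels differ. Label-difference at $a$ is a necessary \emph{consequence} of ``$uw$ still needs to be created'' (an $\eta$ can never join two vertices sharing a label), not a sufficient condition for it, so no argument can recover the key claim from that hypothesis alone.

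What actually closes the gap is colour, not label: in every application one has $\Colour(w)\neq\Colour(u)$ (with white allowed for $w$), because $y_{j_k}$ has the colour of $v_{r,j_k}$ while $x_{j_k}$ by definition does not. Since $W_a=W_r\oplus W_b$ and disjoint union creates no edges, every edge of $W_a$ has both endpoints red or both endpoints blue; hence an edge joining two differently coloured vertices, or incident to a white vertex, cannot lie in $E(W_a)$ and must be created by an $\eta$ strictly above $a$---which then feeds into your (correct) invariance argument. To be fair, the paper's own statement shares this imprecision: its stated hypothesis is the label inequality, while its justification tacitly relies on the colour inequality, and it is only ever applied to triples where the colour inequality holds. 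But your write-up turns that imprecision into a genuine gap by attempting to derive the missing claim from the label hypothesis, which is not possible.
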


Let us denote a row without white vertices by $r$. From the foregoing, we know that every column $j$ in $W^\alpha_n$ must have a vertex with different colour than that of $v_{r,j}$. We denote by $x_j$ a nearest to $v_{r,j}$ vertex in the same column with $\Colour(x_j) \neq \Colour(v_{r,j})$. We then let $y_j$ denote $v_{i+1,j}$ if $i < r$ or $v_{i-1,j}$ if $i > r$. Notice that $y_j$ is non-white and $x_j$ is adjacent to $y_j$. This creates two sets of $n$ vertices, $X = \{x_j: j = 1, \dots, n\}$ and  $Y = \{y_j: j = 1,\dots, n\}$ where $x_j$ is adjacent to $y_j$ for each $j$. 

We examine first the case for the word $\omega=2323\cdots$, the infinite word with alternating $2$s and $3$s, as this is relatively simple whilst demonstrating the technique required.

\begin{lemma}\label{Womega}
Let $\omega$ be the infinite word with alternating $2$s and $3$s. Then
\[\cwd(W^\omega_n) \ge n/2.\]
\end{lemma}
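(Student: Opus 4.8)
The plan is to establish the lower bound $\cwd(W^\omega_n) \ge n/2$ by exploiting the structure set up in the preamble: the distinguished node $a$, the red/blue/white colouring, and the two sets $X = \{x_j\}$ and $Y = \{y_j\}$ of $n$ vertices each with $x_j$ adjacent to $y_j$. The key idea, following the Golumbic--Rotics and Brandst\"adt--Lozin approach, is that the number of distinct labels at node $a$ must be large, and we use Observation~\ref{obs1} repeatedly to force many of the $x_j$ (or $y_j$) to carry pairwise distinct labels.

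First I would recall the geometry of $W^\omega_n$ for the alternating word $\omega = 2323\cdots$. The edges between consecutive diagonals alternate between the ``$k \ge i$'' rule (for $\alpha_m = 2$) and the ``$l \ge j$'' rule (for $\alpha_m = 3$), which means the adjacency pattern along a fixed row or column is highly structured. I would extract from this the precise adjacency relation that tells me, for two vertices $x_j$ and $x_{j'}$ in distinct columns $j \ne j'$, whether $x_j$ is adjacent to $y_{j'}$. The crucial point to pin down is a \emph{monotonicity} or \emph{nesting} property: because the $2$s and $3$s alternate, the neighbourhoods of the $y$-vertices along the grid form a chain-like structure, so that for many pairs $(j, j')$ we have $x_j y_{j'} \in E$ but $x_{j'} y_{j'} \in E$ while the reverse adjacency fails (or some analogous asymmetry holds).

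Next I would apply Observation~\ref{obs1} as the engine. For a suitable pair of indices $j, j'$, I set $u = x_j$, $v = x_{j'}$ (both non-white, since $x_j$ has colour different from the all-non-white row $r$ but is itself coloured red or blue, hence non-white), and $w = y_{j'}$, chosen so that $uw = x_j y_{j'} \in E$ but $vw = x_{j'} y_{j'} \notin E$ (or vice versa), while also ensuring $\Label(w) \ne \Label(u)$. The observation then forces $\Label(x_j) \ne \Label(x_{j'})$. By iterating this over a chain of roughly $n/2$ indices arranged so that the required adjacency/non-adjacency alternation is available at each step, I obtain $n/2$ vertices among the $X$ (or $Y$) that must all carry distinct labels at node $a$. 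Since the number of labels used is a lower bound witnessed at this single node, this yields $\cwd(W^\omega_n) \ge n/2$.

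\textbf{The main obstacle} I anticipate is verifying the $\Label(w) \ne \Label(u)$ hypothesis of Observation~\ref{obs1} and simultaneously guaranteeing the edge/non-edge configuration for enough pairs to reach the count $n/2$ rather than some smaller constant fraction. One cannot assume labels are distinct for free, so the argument must be arranged so that the failure of $\Label(w) = \Label(u)$ can itself be leveraged --- typically by a case analysis on whether two candidate vertices share a label, pushing the distinctness onto a different pair when they do. Managing this bookkeeping along the alternating $2/3$ diagonals, while the factor of $2$ (from the alternation) correctly enters the bound, is where the care lies; the alternation is precisely what degrades the naive bound of $n$ down to $n/2$, so the counting of how many steps survive the alternation is the quantitatively delicate part.
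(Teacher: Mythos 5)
Your proposal has genuine gaps, both in the counting strategy and in the technical setup.

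The central problem is the counting. You plan to iterate Observation~\ref{obs1} ``over a chain of roughly $n/2$ indices'' to produce $n/2$ vertices with pairwise distinct labels. But applying the observation along a chain only forces \emph{consecutive} pairs to receive different labels; the labels could then simply alternate between two values, giving no lower bound at all. To conclude pairwise distinctness for a set of size $n/2$ you would need the edge/non-edge configuration of Observation~\ref{obs1} for \emph{every} pair in that set, and since the positions of the $x_j$ and $y_j$ are dictated by the (adversarial) clique-width expression, there is no reason such a large set exists. The paper's proof runs in the opposite direction: it shows that no \emph{three} vertices of $Y$ can share a label, so each label class within $Y$ has size at most $2$, and pigeonhole on the $n$ vertices of $Y$ gives at least $n/2$ labels. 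Proving that bounded multiplicity needs only one contradiction: if $y_{j_1},y_{j_2},y_{j_3}$ share a label, Observation~\ref{obs1} forces every vertex of $Y'$ to be adjacent to every vertex of $X'$, and then the geometric fact you left vague (Observation~\ref{obs2}: a vertex on an odd diagonal has exactly one neighbour to its right, a vertex on an even diagonal exactly one to its left) yields a parity contradiction --- the leftmost $x_{j_1}$, needing two rightward neighbours in $Y'$, forces all of $Y'$ onto odd diagonals, while the rightmost $x_{j_3}$, needing two leftward neighbours, forces all of $Y'$ onto even diagonals. This geometric step is the heart of the proof and is absent from your proposal.

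There are also two concrete technical errors. First, your proposed triple takes $vw = x_{j'}y_{j'} \notin E$, but $x_{j'}$ is adjacent to $y_{j'}$ for every $j'$ by the very construction of $X$ and $Y$, so this configuration never occurs; the usable non-adjacency must be a cross-pair one, e.g.\ $y_j \not\sim x_{j'}$, with the guaranteed edge $x_{j'}y_{j'}$ playing the role of $uw$. Second, your claim that $x_j$ is ``itself coloured red or blue, hence non-white'' is false: $x_j$ is defined only as a nearest vertex whose colour \emph{differs} from that of $v_{r,j}$, and that different colour may well be white, in which case $x_j$ has no label at node $a$ and cannot serve as $u$ or $v$ in Observation~\ref{obs1}. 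This is exactly why the paper counts labels on the vertices of $Y$ (which are guaranteed non-white) and uses the $x_j$ only as the witnesses $w$.
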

\begin{proof}
We claim that for $W^\omega_n$, no three vertices in $Y$ can have the same label. From this the lemma will follow since the $n$ vertices in $Y$ are labelled with at least $n/2$ labels at node $a$. Suppose for a contradiction that vertices $Y^\prime = \{y_{j_1}, y_{j_2}, y_{j_3}\} \subseteq Y$ have the same label at $a$. Without loss of generality we assume that $j_1 < j_2 < j_3$. Let $X^\prime = \{x_{j_1}, x_{j_2}, x_{j_3}\} \subseteq X$ be the corresponding subset of $X$. 

Assume a vertex $y_{j_i}$ in $Y^\prime$ is not adjacent to a vertex $x_{j_k}$ in $X^\prime$. Clearly, $i \neq k$ and hence vertices $y_{j_i}$, $y_{j_k}$ and $x_{j_k}$ form a triple as described in Observation \ref{obs1}, so $y_{j_i}$ and $y_{j_k}$ have different labels, a contradiction. Hence, each vertex in $Y^\prime$ must be adjacent to each vertex in $X^\prime$.

\begin{obs}\label{obs2}
\begin{enumerate}[label=(\alph*)]
\item if a vertex $v_{i,j}$ lies on an odd diagonal $D_{2m-1}$  then it has only one neighbour to its right and all its neighbours lie on the even diagonals $D_{2m}$  and $D_{2m+2}$;
\item if a vertex $v_{i,j}$ lies on an even diagonal $D_{2m}$  then it has only one neighbour to its left and all its neighbours lie on the odd diagonals $D_{2m-1}$  and $D_{2m+1}$.
\end{enumerate}
\end{obs}
 
The leftmost vertex in $X^\prime$, $x_{j_1}$, must have two neighbours in $Y^\prime$ to its right, so from Observation \ref{obs2}, $x_{j_1}$ must sit on an even diagonal and all the vertices in $Y^\prime$ must sit on odd diagonals. On the other hand, the rightmost vertex in $X^\prime$, $x_{j_3}$, must have two neighbours in $Y^\prime$ to its left, so from Observation \ref{obs2}, $x_{j_3}$ must sit on an odd diagonal and all the vertices in $Y^\prime$ must sit on even diagonals. We have a contradiction and hence no three vertices in $Y$ can have the same label, as required. 
\end{proof}

The extension to consider arbitrary words over $\{2,3\}$ is similar, but the behaviour of the diagonals given in Observation~\ref{obs2} represent just two of several possible types of behaviour.

\begin{lemma}\label{Walpha}
If $\alpha$ is any infinite word over the alphabet $\{2,3\}$ then 
\[\cwd(W^\alpha_n) \ge n/4.\] 
\end{lemma}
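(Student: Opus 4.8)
The plan is to mirror the structure of the proof of Lemma~\ref{Womega}, retaining the reduction to counting labels on the set $Y$: if no five vertices in $Y$ can share a label at node $a$, then $Y$ uses at least $n/4$ labels and hence $\cwd(W^\alpha_n)\ge n/4$. The threshold jumps from three to five because, for a general word over $\{2,3\}$, the clean ``odd/even diagonal'' dichotomy of Observation~\ref{obs2} no longer holds; instead, adjacency from a vertex on diagonal $D_m$ to $D_{m+1}$ depends on whether $\alpha_m=2$ or $\alpha_m=3$, and symmetrically the edges into $D_m$ from $D_{m-1}$ depend on $\alpha_{m-1}$. So the first step is to replace Observation~\ref{obs2} with a refined description of the local neighbourhood structure: for a vertex on a diagonal, record which of its left-neighbours and right-neighbours exist as a function of the two relevant letters of $\alpha$, classifying each diagonal by the pair $(\alpha_{m-1},\alpha_m)\in\{2,3\}^2$.

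Next I would argue, exactly as before, that if $Y'\subseteq Y$ all share a label then every vertex of $Y'$ must be adjacent to every vertex of the corresponding set $X'$ (the triple argument via Observation~\ref{obs1} is unchanged and purely combinatorial). The heart of the matter is then a geometric constraint: a common neighbour like $x_{j_1}$ sitting to the left of several $y$-vertices forces those $y$-vertices onto a restricted set of diagonals, and a common neighbour $x_{j_k}$ sitting to the right forces a different restriction. With $|Y'|=5$ I would extract the leftmost and rightmost $x$-vertices and show that the ``forced-left'' and ``forced-right'' diagonal constraints on the $Y'$ vertices are incompatible, yielding a contradiction. The factor of $4$ (rather than $2$) arises because each of the left-constraint and right-constraint can now admit up to two compatible diagonal parities/types depending on the letters, so one needs five vertices to guarantee that some pair lands in a genuinely contradictory configuration regardless of the underlying letters.

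The main obstacle, and the step requiring the most care, is making the diagonal-based case analysis uniform across all words $\alpha$. In Lemma~\ref{Womega} the alternation $2323\cdots$ pins every vertex's neighbours to specific diagonals; for arbitrary $\alpha$ the positions of neighbours vary with the local letters, so I would need a statement of the form ``if a vertex has two neighbours from $Y'$ to its right, then those neighbours lie among at most two diagonals determined by the letter(s) at that position,'' and similarly on the left. The delicate point is that the \emph{same} vertex $x_{j_i}$ might play the left-role for some pairs and the right-role for others as we vary which five vertices we pick; I would fix this by always taking the extreme (leftmost and rightmost) members of $X'$ so that each is unambiguously a ``two-neighbours-on-one-side'' vertex. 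Once the refined neighbour-location lemma is in place, the contradiction should follow by a pigeonhole over the (boundedly many) diagonal types, and the constant $4$ will emerge from bounding the number of diagonals simultaneously compatible with both the left and the right extreme vertices.
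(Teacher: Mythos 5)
Your setup does follow the paper: the reduction to showing that no five vertices of $Y$ can share a label, the step (via Observation~\ref{obs1}) that every vertex of $Y'$ must then be adjacent to every vertex of $X'$, and the classification of a vertex on diagonal $D_m$ by the pair $(\alpha_{m-1},\alpha_m)$ are all exactly what the paper does (it calls this pair the vertex's \emph{type}, giving four types $22$, $23$, $32$, $33$, each with two neighbourhood ``branches'', one on $D_{m-1}$ and one on $D_{m+1}$). The gap is in your central claim: that the ``forced-left'' constraint coming from the leftmost vertex $x_{j_1}$ and the ``forced-right'' constraint coming from the rightmost vertex $x_{j_5}$ are \emph{incompatible}. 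They are not. Take $\alpha=2^\infty$, so every vertex has type $22$: one branch (on $D_{m-1}$) extending up/right, the other (on $D_{m+1}$) extending down/left. Place all five vertices of $Y'$ on a single diagonal $D$, put $x_{j_1}$ on diagonal $D+1$ below and to the left of all of them, and put $x_{j_5}$ on diagonal $D-1$ above and to the right of all of them. Then $x_{j_1}$ is adjacent to all of $Y'$ (they lie on its up/right branch) and $x_{j_5}$ is adjacent to all of $Y'$ (they lie on its down/left branch). The constraints imposed by the two extreme vertices are thus perfectly compatible --- for any number of shared-label vertices, not just five --- so no contradiction can be extracted from the extremes alone, and no pigeonhole over diagonal types fixes this, since the compatible configuration above already has all of $Y'$ on one diagonal.

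What your proposal is missing is that the contradiction must come from the \emph{middle} vertices of $X'$, which you never use. In the configuration above (the paper's Case 1), the vertex $x_{j_3}$ lies in a column $j_3$ with $j_1<j_3<j_5$, and it must be adjacent both to $y_{j_1}$ (at least two columns to its left) and to $y_{j_5}$ (at least two columns to its right), both on diagonal $D$. But the neighbourhood of $x_{j_3}$ restricted to the single diagonal $D$ is one branch: a contiguous segment extending in only one horizontal direction from a cell adjacent to $x_{j_3}$. Whatever its type, that branch cannot reach two columns to the left and two columns to the right simultaneously --- that is the contradiction. The paper's other cases likewise hinge on the middle vertices: in its Case 2, adjacency to $y_{j_1}$ and $y_{j_5}$ forces $x_{j_2},x_{j_3},x_{j_4}$ onto a common diagonal with a common type, and then $x_{j_2}$ and $x_{j_4}$ cannot both be adjacent to $y_{j_3}$. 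This also corrects your account of where the constant comes from: five vertices are needed so that there are enough middle vertices to run these case analyses (and so that three vertices of type $23/32$ can be disposed of by the argument of Lemma~\ref{Womega}), not because five extreme-vertex constraints become mutually inconsistent.
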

\begin{proof}
We claim that for $W^\alpha_n$, no five vertices in $Y$ can have the same label. From this the lemma will follow since the $n$ vertices in $Y$ are labelled with at least $n/4$ labels at node $a$.

Suppose for a contradiction that vertices $Y^\prime = \{y_{j_1}, y_{j_2}, y_{j_3}, y_{j_4}, y_{j_5}\} \subseteq Y$ have the same label at $a$. Without loss of generality we assume that $j_1 <j_2 < j_3 < j_4 <j_5$.  Let $X^\prime = \{x_{j_1}, x_{j_2}, x_{j_3}, x_{j_4}, x_{j_5}\} \subseteq X$ be the corresponding subset of $X$. As in Lemma~\ref{Womega}, we must have each vertex in $Y^\prime$ adjacent to each vertex in $X^\prime$.

If a vertex lies on diagonal $D_m$ we say it has \emph{type} $\alpha_{m-1}\alpha_m$. Hence, we have 4 possible types of vertex, namely $22$, $23$, $32$ and $33$. The distinguishing feature of each is its neighbourhood. Each has two \emph{branches} to its neighbourhood on diagonals $D_{m-1}$ and $D_{m+1}$ -- see Figure~\ref{fig:neighbour}.
\begin{enumerate}[label=(\alph*)]
\item a type $22$ vertex neighbourhood has one branch ($D_{m-1}$) extending up/right and one ($D_{m+1}$) extending down/left,
\item a type $23$ vertex neighbourhood has two branches ($D_{m-1}$ and $D_{m+1}$) extending up/right,
\item a type $32$ vertex neighbourhood has two branches ($D_{m-1}$ and $D_{m+1}$) extending down/left,
\item a type $33$ vertex neighbourhood has one branch ($D_{m-1}$) extending down/left and one ($D_{m+1}$) extending up/right.
\end{enumerate}

\begin{figure}\centering

\begin{tikzpicture}[scale=0.5,
	vertex1/.style={circle,draw,minimum size=10, fill=none},
	vertex2/.style={circle,draw,minimum size=8,fill=black},]
	
	\mnnodearray{7}{7}
		\node at (4,4) [vertex1]{};
		\node at (3,4) [vertex2]{}; \node at (4,5) [vertex2]{};
		\node at (5,6) [vertex2]{};	\node at (6,7) [vertex2]{};
		\node at (5,4) [vertex2]{};	\node at (4,3) [vertex2]{};
		\node at (3,2) [vertex2]{}; \node at (2,1) [vertex2]{};

		\node at (4,1) [label=below:{$22$}]{};

		\draw (4,4) -- (3,4);	\draw (4,4) -- (4,5);
		\draw (4,4) -- (5,6);	\draw (4,4) -- (6,7);
		\draw (4,4) -- (5,4);	\draw (4,4) -- (4,3);
		\draw (4,4) -- (3,2);	\draw (4,4) -- (2,1);

	\begin{scope}[shift={(8,0)}]
	\mnnodearray{7}{7}
		\node at (4,4) [vertex1]{};
		\node at (3,4) [vertex2]{}; \node at (4,5) [vertex2]{};
		\node at (5,6) [vertex2]{};	\node at (6,7) [vertex2]{};	
		\node at (4,3) [vertex2]{};	\node at (5,4) [vertex2]{};
		\node at (6,5) [vertex2]{}; \node at (7,6) [vertex2]{};

		\node at (4,1) [label=below:{$23$}]{};
		
		\draw (4,4) -- (3,4);	\draw (4,4) -- (4,5);
		\draw (4,4) -- (5,6);	\draw (4,4) -- (6,7);
		\draw (4,4) -- (4,3);	\draw (4,4) -- (5,4);
		\draw (4,4) -- (6,5);	\draw (4,4) -- (7,6);
			
	\end{scope}
	
	\begin{scope}[shift={(16,0)}]
	\mnnodearray{7}{7}
		\node at (4,4) [vertex1]{};
		\node at (4,5) [vertex2]{}; \node at (3,4) [vertex2]{};
		\node at (2,3) [vertex2]{};	\node at (1,2) [vertex2]{};
		\node at (5,4) [vertex2]{};	\node at (4,3) [vertex2]{};
		\node at (3,2) [vertex2]{}; \node at (2,1) [vertex2]{};

		\node at (4,1) [label=below:{$32$}]{};

		\draw (4,4) -- (4,5);	\draw (4,4) -- (3,4);
		\draw (4,4) -- (2,3);	\draw (4,4) -- (1,2);
		\draw (4,4) -- (5,4);	\draw (4,4) -- (4,3);
		\draw (4,4) -- (3,2);	\draw (4,4) -- (2,1);
			
	\end{scope}	
	
		\begin{scope}[shift={(24,0)}]
	\mnnodearray{7}{7}
		\node at (4,4) [vertex1]{};
		\node at (4,5) [vertex2]{}; \node at (3,4) [vertex2]{};
		\node at (2,3) [vertex2]{};	\node at (1,2) [vertex2]{};
		\node at (4,3) [vertex2]{};	\node at (5,4) [vertex2]{};
		\node at (6,5) [vertex2]{}; \node at (7,6) [vertex2]{};

		\node at (4,1) [label=below:{$33$}]{};

		\draw (4,4) -- (4,5);	\draw (4,4) -- (3,4);
		\draw (4,4) -- (2,3);	\draw (4,4) -- (1,2);
		\draw (4,4) -- (4,3);	\draw (4,4) -- (5,4);
		\draw (4,4) -- (6,5);	\draw (4,4) -- (7,6);
			
	\end{scope}	
	
\end{tikzpicture}\par
\caption{Neighbourhood types (large vertices are neighbours of central vertex)}
	\label{fig:neighbour}
\end{figure}
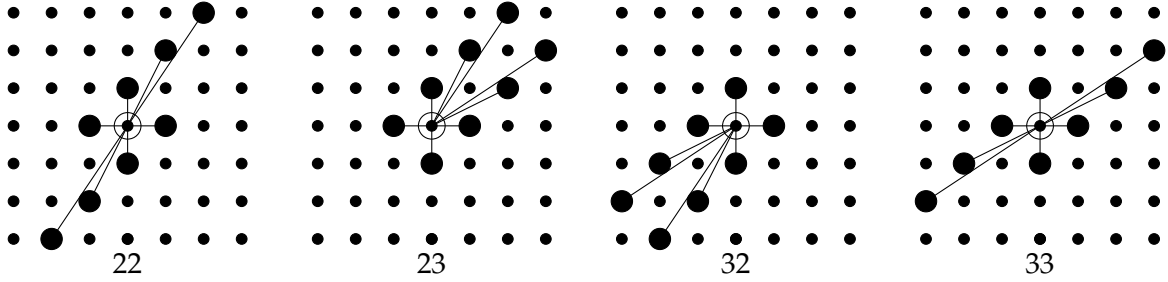

We have $5$ vertices in $X^\prime$ each of which could be one of $4$ types, so potentially there are $4^5 = 1,024$ different cases, but we can handle these in groups that depend on the type of the two outside vertices $x_{j_1}$ and $x_{j_5}$. 
 
\emph{Case 1}: $x_{j_1}$ and $x_{j_5}$ are each of type $22$ or $33$. 
 
Since $x_{j_1}$ must be adjacent to $4$ vertices in $Y^\prime$ to its right and $x_{j_5}$ must be adjacent to $4$ vertices in $Y^\prime$ to its left, then all the vertices in $Y^\prime$ must be on the same diagonal (as the neighbourhood of $x_{j_1}$ has only one branch going rightwards and the neighbourhood of $x_{j_5}$ has only one branch going leftwards). But now there is no possible neighbourhood type for $x_{j_3}$ that could make it simultaneously adjacent to $y_{j_1}$ and $y_{j_5}$, so we have a contradiction.
 
\emph{Case 2}: $x_{j_1}$ and $x_{j_5}$ are each of type $23$ or $32$. 
 
In Lemma \ref{Womega} we dealt with the case where there are $3$ vertices in $X^\prime$ that are either $23$ or $32$ types so we only need to consider the case where the middle $3$ vertices in $X^\prime$ are type $22$ or $33$. Furthermore, as $x_{j_1}$ must be adjacent to $4$ vertices in $Y^\prime$ to its right and $x_{j_5}$ must be adjacent to $4$ vertices in $Y^\prime$ to its left, then $x_{j_1}$ can only be of type $23$ and $x_{j_1}$ type $32$.
Then considering vertex $x_{j_3}$ we must have all three vertices $x_{j_1}$, $x_{j_3}$ and $x_{j_5}$ on the same diagonal otherwise $x_{j_3}$ cannot be adjacent to both $y_{j_1}$ and $y_{j_5}$. Lastly, $x_{j_2}$ and $x_{j_4}$ must be of the same type as $x_{j_3}$ (i.e. either type $22$ or $33$) and on the same diagonal as  $x_{j_3}$ in order to be adjacent to both $y_{j_1}$ and $y_{j_5}$. But then they cannot both be adjacent to $y_{j_3}$ and we have a contradiction.

\emph{Case 3}: One of $x_{j_1}$ and $x_{j_5}$ is of type $22$ or $33$ and the other is of type $23$ or $32$. 

Suppose $x_{j_1}$ is type $22$ and $x_{j_5}$ is type $32$, then $y_{j_3}$, $y_{j_4}$ and $y_{j_5}$ must be on the same diagonal. $x_{j_3}$ cannot be type $23$ or $32$ as it could not be adjacent to both $y_{j_1}$ and $y_{j_5}$. 

If $x_{j_3}$ is type $33$ then it must be on the same diagonal as $x_{j_1}$ in order for  both $x_{j_1}$ to be adjacent to $y_{j_3}$ and $x_{j_3}$ to be adjacent to $y_{j_1}$. But then $x_{j_2}$ cannot be any of the $4$ types and still be simultaneously adjacent to $y_{j_1}$, $y_{j_3}$ and $y_{j_5}$, hence we have a contradiction.

Likewise if $x_{j_3}$ is type $22$ then it must be on the diagonal $D_{m-2}$ (where $x_{j_1}$ is on $D_{m}$) in order for  both $x_{j_1}$ to be adjacent to $y_{j_3}$ and $x_{j_3}$ to be adjacent to $y_{j_1}$. But then, as before, $x_{j_2}$ cannot be any of the $4$ types and still be simultaneously adjacent to $y_{j_1}$, $y_{j_3}$ and $y_{j_5}$, hence we have a contradiction.

For the other Case 3 combinations (e.g.\ $x_{j_1}$ is type $33$ and $x_{j_5}$ is type $32$) we can use an analogous argument and each time reach a contradiction. We omit the details.

We have now considered all possible combinations of vertex type and each one leads to a contradiction. Hence, no five vertices in $Y$ can have the same label.
\end{proof}

\begin{lemma}\label{lem-23unbound}
If $\alpha$ is any infinite word over the alphabet $\{2,3\}$ then the graph class $\GGG^\alpha$ is a hereditary class of graphs of unbounded clique-width.
\end{lemma}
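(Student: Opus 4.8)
The plan is to simply assemble the two preceding lemmas, since all the substantive work has already been done. First I would observe that $\GGG^\alpha$ is hereditary by its very definition (it is the class of all finite induced subgraphs of $\PPP^\alpha$, and such a class is automatically closed under taking induced subgraphs), so that half of the statement requires nothing further. It remains only to show that the clique-width of the graphs in $\GGG^\alpha$ is unbounded.

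For this, fix an arbitrary $n \in \mathbb{N}$. The graph $W^\alpha_n$ is finite (it has $n^2$ vertices), and by Lemma~\ref{Wn} it embeds as an induced subgraph into $\PPP^\alpha$; hence $W^\alpha_n \in \GGG^\alpha$. By Lemma~\ref{Walpha} we have $\cwd(W^\alpha_n) \ge n/4$. Therefore $\GGG^\alpha$ contains, for every $n$, a graph of clique-width at least $n/4$, and since $n/4 \to \infty$ as $n \to \infty$ the clique-width of $\GGG^\alpha$ is unbounded. Combined with the hereditary property, this gives the claim.

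There is essentially no obstacle at this stage: the two genuinely difficult ingredients — realising $W^\alpha_n$ inside $\PPP^\alpha$ (the explicit embedding of Lemma~\ref{Wn}) and the labelling/diagonal-type case analysis yielding the lower bound $\cwd(W^\alpha_n)\ge n/4$ (Lemma~\ref{Walpha}) — are already in hand. The only point to be careful about is the logical direction of the monotonicity: because clique-width does not increase under taking induced subgraphs, exhibiting induced subgraphs of arbitrarily large clique-width is exactly what is needed to conclude that the class is of unbounded clique-width, and there is no circularity or hidden finiteness issue to check beyond noting that each $W^\alpha_n$ is a finite graph and hence a genuine member of $\GGG^\alpha$.
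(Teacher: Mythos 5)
Your proposal is correct and matches the paper's proof, which simply observes that the result ``follows immediately'' from the clique-width lower bound of Lemma~\ref{Walpha} (together with Lemma~\ref{Womega}); in fact you are slightly more careful than the paper, since you explicitly invoke Lemma~\ref{Wn} to certify that each $W^\alpha_n$ genuinely lies in $\GGG^\alpha$, a step the paper leaves implicit.
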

\begin{proof}
This follows immediately from Lemmas \ref{Womega} and \ref{Walpha}.
\end{proof}

%
%
%
%
%
\subsection{\texorpdfstring{$\{0,1,2,3\}$}{\{0,1,2,3\}} graph classes with unbounded clique-width}

We now extend our results to graph classes $\GGG^\alpha$ where $\alpha$ is an infinite word over the alphabet $\{0,1,2,3\}$ containing an infinite number of letters from $\{2,3\}$. For this we will use the rank-width parameter described in Section \ref{cliquerank}. From \cite{collins:infinitely-many:} we have a toolkit of graph operations which we extend to show that the graph class $\GGG^\alpha$ contains a graph with a vertex-minor $H^\gamma_{1,1}(q,q)$ for some $q$ where $\gamma$ is an infinite word from the alphabet $\{2,3\}$. If we can make $q$ as large as we like then combining Lemma \ref{vminor} with Lemma \ref{lem-23unbound} gives us the result that $\GGG^\alpha$ has unbounded rank-width and therefore unbounded clique-width.

The following graph operations are demonstrated in \cite{collins:infinitely-many:} unless otherwise stated. Each operation takes a graph $H^\alpha_{1,1}(m,n)$ and uses local complementation and vertex deletion to create a $p \times q$ vertex-minor $H^\gamma_{1,1}(p,q)$, for some $p \le m$ and $q \le n$, where $\gamma$ is a $q-1$ letter word derived from $\alpha$ with certain letters removed.  We use the term  \emph{$0$ removal} where the removed letter(s) are $0$s and likewise \emph{$1$ removal} where the removed letter(s) are $1$s.

\textbf{$0$ Removal Operations}
\begin{enumerate}[label=(\roman*)]
	\item Removing a $0$ from the factor $00$ can be achieved by applying local complementation to each of the vertices in the middle column and then deleting the vertices in that column.
	\item Removing the $0$ from the factor $01$ can be achieved by applying local complementation to each of the vertices in the middle column sequentially from top to bottom and then deleting the vertices in that column. If the number of rows $m$ is even this is equivalent to removing the $0$ from the factor $01$. If the number of rows is odd the same result is achieved by modifying the process so that the local complementation ends on row $m-1$ and deleting the last row of vertices. The factor $10$ can be reduced to $1$ in the same way.
	\item Removing the $0$ from the factor $02$ can be achieved by local complementation on the vertices of the middle column and then deleting the odd rows. Also factor $20$ can be reduced to $2$, $03$ to $3$ and $30$ to $3$ in the same way. 
\end{enumerate} 

These operations allow us to create a vertex-minor $H^\gamma_{1,1}(p,q)$ with the letters of $\gamma$ from the alphabet $\{1,2,3\}$. 

\textbf{$1$ Removal Operations}
\begin{enumerate}[label=(\roman*)]
	\item Transforming the factor $211$ into $2$ can be achieved by using one pivot and deleting the first and last rows to give a $200$ factor and then using $0$ removal operations to reduce to $2$.  In the same way we can transform the factor $112$ into $2$, $311$ into $3$ and $113$ into $3$. 
	\item Transforming the factor $212$ into $22$ can be achieved by using one pivot and deleting the first and last rows to give a $202$ factor and then using $0$ removal operations to reduce to $22$ . In the same way we can also reduce $313$ to $33$. 
	\item Finally, we claim we can transform the factor $213$ into $22$. As this is not covered by \cite{collins:infinitely-many:} we give the proof here.
	
	Let $C_{k}$, $C_{k+1}$, $C_{k+2}$, $C_{k+3}$ be four consecutive columns of $H_{i,j}(m,n)$ such that $C_{k} \cup C_{k+1}$ induce a $2$-link, $C_{k+1} \cup C_{k+2}$ induce a $1$-link and $C_{k+2} \cup C_{k+3}$ induce a $3$-link.
	Let $x$ be the vertex in the first row of column $C_{k+1}$ and $y$ be the vertex in the last row of column $C_{k+2}$ It can be seen that by pivoting on the edge $xy$ and then deleting the whole of the first row, the second row to the right of column $C_{k+2}$ and deleting $y$ and vertices on the last row to the right of $y$, we have transformed the factor $213$ into $202$. We can then use the zero removal operations to reduce to $22$. In the same way we can also reduce $312$ to $33$.
	
\end{enumerate} 
\begin{obs}\label{obs4}
If $r$ is the number of rows prior to the removal of a $0$ or $1$ by one of the these operations then after the operation the number of rows left will be at least $(r/2)-2$. 
\end{obs}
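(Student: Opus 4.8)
The plan is to verify the bound operation-by-operation, tracking how each listed procedure affects the number of rows, and to observe that the only step which can roughly halve the row count is the deletion of alternate rows in $0$-removal operation~(iii); every other ingredient (local complementation, pivoting, and the deletion of a bounded number of explicitly named rows) costs at most two rows. So the whole claim reduces to checking that no operation combines more than one halving with more than a two-row loss.

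First I would dispose of the three $0$-removal operations directly. Operation~(i) deletes only a column, so no rows are lost; operation~(ii) deletes at most the last row, leaving at least $r-1$ rows; and operation~(iii) deletes the odd rows, leaving $\floor{r/2}$. In each case the number of surviving rows is at least $\floor{r/2}\ge r/2-1\ge r/2-2$, so these operations sit comfortably inside the claimed bound and are not the binding cases.

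Next I would treat the $1$-removal operations, which are where the bound is tight. Each such operation first performs a single pivot — which deletes no vertices, and hence no rows — and then deletes a small number of named rows to produce an intermediate factor of the form $200$ or $202$. In operations~(i) and~(ii) this is exactly the first and last rows, a loss of $2$; in operation~(iii) the first row is deleted in full together with partial rows to the right of a fixed column, and I would check that the net loss of full rows is again at most $2$. The resulting factor is then cleared of its zeros using the $0$-removal operations. The key point here is that only \emph{one} genuine halving occurs: any block of consecutive zeros is first collapsed using operation~(i) at no cost to the rows, leaving a single isolated $0$ adjacent to a $2$, which is removed by a single application of the halving operation~(iii). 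Thus from the $r$ starting rows we pass to at least $r-2$ rows after the pivot step and then to at least $\floor{(r-2)/2}\ge r/2-2$ rows after the unique halving, as required.

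The main obstacle is bookkeeping rather than ideas: I must make sure that in reducing $200$ (respectively $202$) to $2$ (respectively $22$) the zeros are removed in the order that collapses adjacent zeros first, since removing a zero adjacent to a $2$ before collapsing the $00$ block would trigger a second halving and break the bound; and I must count the partial-row deletions in the $213\to 202$ step carefully enough to confirm they cost at most two full rows. Both are routine once the order of operations is fixed, and the slack of $-2$ in the statement is exactly what absorbs the two rows lost in the pivot step.
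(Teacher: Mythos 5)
Your verification is correct and is essentially the argument the paper intends: Observation~\ref{obs4} is stated without an explicit proof, its justification being exactly this operation-by-operation inspection, in which every operation incurs at most one halving (from $0$-removal~(iii)) plus a bounded number of deleted rows, and your remark that the $00$-block in $200$ must be collapsed by operation~(i) \emph{before} the final halving is precisely the ordering the bound relies on. One small correction to your bookkeeping: in the $213\to 202$ step the deletions touch three rows (the whole first row plus parts of the second and last rows), not two, but since $\left\lfloor (r-3)/2 \right\rfloor \ge r/2-2$ the claimed bound is unaffected.
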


Thus, by starting with a large enough number $m$ in our choice of $H_{i,j}(m,n)$, we may remove a finite number of $0$s and $1$s and still ensure that there are enough rows left at the end of the process.
 
We now have a complete set of tools, using local complementation and vertex removal applied to $H^\alpha_{1,1}(m,n)$, to create a vertex-minor $H^\gamma_{1,1}(p,q)$ with the letters of $\gamma$ coming only from the alphabet $\{2,3\}$.

\begin{lemma}\label{lem-0123unbound}
Let $\alpha$ be an infinite word over the alphabet $\{0,1,2,3\}$ which has an infinite number of letters from $\{2,3\}$. Further, let $\beta$ be a factor $\alpha_k \alpha_{k+1} \cdots \alpha_{k+p-1}$ of length $p$ which has $q$ ($0 < q \le p$) letters from $\{2,3\}$ and $p-q$ letters from $\{0,1\}$. 

Then $H^\alpha_{1,k}((q+4)2^{p-q}),p)$ has a vertex-minor isomorphic to $H^\gamma_{1,1}(q,q)$, for some word $\gamma$ using only letters from the alphabet $\{2,3\}$.
\end{lemma}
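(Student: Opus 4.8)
The plan is to realise $H^\gamma_{1,1}(q,q)$ starting from $H^\alpha_{1,k}((q+4)2^{p-q},p)$ by deleting, one at a time, every letter of the factor $\beta$ that lies in $\{0,1\}$, using the $0$-removal and $1$-removal operations assembled above. Since those operations are already shown to realise each single-letter deletion by local complementations and vertex deletions, and since the vertex-minor relation $\le_V$ is by definition closed under composing such steps, the whole lemma reduces to one piece of bookkeeping: verifying that after deleting all of the (at most $p-q$) letters from $\{0,1\}$ we are still left with at least $q$ rows. The surviving columns are governed exactly by the letters of $\beta$ drawn from $\{2,3\}$, so the resulting defining word $\gamma$ is automatically over $\{2,3\}$, and after the deletions there are at least $q$ such columns.

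For the row count, write $r_0=(q+4)2^{p-q}$ for the initial number of rows and let $r_n$ be a lower bound for the number of rows after $n$ letter-removals. Observation~\ref{obs4} gives $r_{n+1}\ge r_n/2-2$, and the substitution $s_n:=r_n+4$ turns this into $s_{n+1}\ge s_n/2$, so that $r_n+4\ge (r_0+4)/2^n$. Deleting all $p-q$ of the $\{0,1\}$-letters therefore leaves at least
\[
r_{p-q}\;\ge\;\frac{(q+4)2^{p-q}+4}{2^{p-q}}-4\;=\;q+2^{\,q-p+2}\;\ge\;q
\]
rows. A removal such as $211\mapsto 2$ clears two letters at once but at the cost of two successive halvings, so charging one application of the recurrence to each deleted $\{0,1\}$-letter is never optimistic, and the bound above is exactly what the choice $M=(q+4)2^{p-q}$ is calibrated to guarantee. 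In particular the row count never drops below $q\ge1$, so at every stage there are enough rows for the next operation to be carried out.

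To finish, I would take the graph produced at the end of this process --- which has at least $q$ rows, whose remaining columns (at least $q$ of them) are all joined by $\{2,3\}$-links, and whose defining word $\gamma$ uses only letters from $\{2,3\}$ --- and restrict it by further vertex deletions to its top-left $q\times q$ block, yielding $H^\gamma_{1,1}(q,q)$. As the whole construction is a sequence of local complementations and vertex deletions, this gives $H^\gamma_{1,1}(q,q)\le_V H^\alpha_{1,k}((q+4)2^{p-q},p)$, which is the assertion.

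The main obstacle is not the arithmetic but confirming that the local operations genuinely \emph{compose} to delete the intended set of letters. Each operation acts only on a window of two to four consecutive columns and may temporarily introduce auxiliary $0$s (for instance the $202$ appearing midway through $213\mapsto 22$), which are then cleared by subsequent $0$-removals; one must check that applying these windows in sequence --- in any order --- really produces the word obtained from $\beta$ by deleting its $\{0,1\}$-letters, and that deletions made to serve one window do not consume rows needed by a later one. The second concern is handled automatically, since the displayed bound is preserved after \emph{every} single removal; the first is precisely the local correctness already established for each operation, the only genuinely new case being $213\mapsto 22$, which is verified in detail above (and is not covered by~\cite{collins:infinitely-many:}).
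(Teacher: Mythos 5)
Your proposal is correct and follows essentially the same route as the paper: apply the $0$- and $1$-removal operations to delete the $p-q$ letters from $\{0,1\}$ and use Observation~\ref{obs4} to track the surviving rows. The paper's own proof is just this argument stated tersely ("it can be seen that\dots"), whereas you make the row-count explicit via the substitution $s_n=r_n+4$, yielding $r_{p-q}\ge q+2^{q-p+2}\ge q$, which is a welcome filling-in of the arithmetic rather than a different approach.
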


\begin{proof}
	This follows by applying the graph operations described above to remove the $0$s and $1$s. There are $p-q$ such letters, and using Observation~\ref{obs4} it can be seen that by starting with at least $(q+4)2^{p-q}$ rows, there will be at least $q$ rows remaining after executing the necessary operations to remove them.
\end{proof}

We now have the following theorem:

\begin{thm}\label{thm-0123unbound}
If $\alpha$ is an infinite word from the alphabet $\{0,1,2,3\}$ with an infinite number of non-zero letters, then the graph class $\GGG^\alpha$ is a hereditary class of graphs of unbounded clique-width.
\end{thm}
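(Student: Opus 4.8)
The plan is to split into two cases according to whether $\alpha$ contains infinitely many letters from $\{2,3\}$ or only finitely many. Since $\alpha$ has infinitely many non-zero letters, in the latter case it must contain infinitely many $1$s, and we can reduce directly to the binary result already available.

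First I would dispatch the case where $\alpha$ has only finitely many letters from $\{2,3\}$. Let $N$ be a position beyond which every letter lies in $\{0,1\}$, and let $\alpha'=\alpha_{N+1}\alpha_{N+2}\cdots$ be the corresponding suffix. Then $\alpha'$ is an infinite binary word containing infinitely many $1$s, and $\PPP^{\alpha'}$ is isomorphic to the subgraph of $\PPP^\alpha$ induced on the columns $C_{N+1},C_{N+2},\dots$, since the edges between consecutive columns depend only on the relevant letters of the word. Consequently $\GGG^{\alpha'}\subseteq\GGG^\alpha$. By Lemma~\ref{lem-01unbound} the class $\GGG^{\alpha'}$ has unbounded clique-width, and since clique-width does not increase when passing to an induced subgraph, any uniform bound on the clique-width of the graphs in $\GGG^\alpha$ would also bound it on $\GGG^{\alpha'}$, a contradiction. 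Hence $\GGG^\alpha$ has unbounded clique-width.

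The main case is when $\alpha$ contains infinitely many letters from $\{2,3\}$, where I would invoke the vertex-minor machinery. Fix a target $q$. Because there are infinitely many letters from $\{2,3\}$, there is a prefix $\beta=\alpha_1\cdots\alpha_p$ containing exactly $q$ of them, and Lemma~\ref{lem-0123unbound} then produces a graph $G_q:=H^\alpha_{1,1}((q+4)2^{p-q},p)\in\GGG^\alpha$ with a vertex-minor isomorphic to $H^\gamma_{1,1}(q,q)$ for some word $\gamma$ over $\{2,3\}$ (extend $\gamma$ to an infinite word if convenient; this affects neither the finite graph $H^\gamma_{1,1}(q,q)$ nor the graphs $W^\gamma_n$ below). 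To bound $\rwd(G_q)$ from below I would first lower-bound the clique-width of $H^\gamma_{1,1}(q,q)$: by Lemma~\ref{Wn}, inspection of the embedding shows $W^\gamma_n$ occupies only rows and columns $1,\dots,2n-1$, so $W^\gamma_n\le_I H^\gamma_{1,1}(q,q)$ whenever $2n-1\le q$. Taking $n=\lfloor (q+1)/2\rfloor$ and applying Lemma~\ref{Walpha} gives $\cwd(H^\gamma_{1,1}(q,q))\ge\cwd(W^\gamma_n)\ge n/4$. Inverting the inequality $\cwd(G)\le 2^{\rwd(G)+1}-1$ then yields $\rwd(H^\gamma_{1,1}(q,q))\ge\log_2(n/4+1)-1$, and by Lemma~\ref{vminor} this is also a lower bound for $\rwd(G_q)$. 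Letting $q\to\infty$ exhibits graphs $G_q\in\GGG^\alpha$ of arbitrarily large rank-width, hence (via $\rwd(G)\le\cwd(G)$) of arbitrarily large clique-width.

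The step I expect to need the most care is the passage from the clique-width lower bound on the $\{2,3\}$ vertex-minor to a rank-width lower bound on $G_q$: vertex-minors only control rank-width (Lemma~\ref{vminor}), not clique-width directly, so the argument must be routed through rank-width, and the conversion $\cwd\le 2^{\rwd+1}-1$ costs a logarithm. This logarithmic loss is harmless, since we only need unboundedness rather than a specific growth rate, but it explains why the bound here is qualitatively weaker than the linear bound of Lemma~\ref{Walpha}. A secondary point to check is the exactness of the suffix containment $\GGG^{\alpha'}\subseteq\GGG^\alpha$ used in the first case, which follows immediately from the column-local definition of the edges of $\PPP^\alpha$.
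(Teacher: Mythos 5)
Your proposal is correct and follows essentially the same route as the paper: the binary result (Lemma~\ref{lem-01unbound}) for the case of finitely many letters from $\{2,3\}$, and otherwise Lemma~\ref{lem-0123unbound} combined with Lemmas~\ref{Walpha} and~\ref{vminor}, converting the clique-width lower bound on the $\{2,3\}$ vertex-minor into a rank-width bound via $\cwd(G)\le 2^{\rwd(G)+1}-1$ exactly as the paper does implicitly. Your suffix argument $\GGG^{\alpha'}\subseteq\GGG^\alpha$ in the first case is in fact slightly more careful than the paper's (which invokes Lemma~\ref{lem-01unbound} without explicitly passing to a binary suffix), but the substance of both cases is identical.
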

\begin{proof}
If there are no $2$s or $3$s or only a finite number of $2$s and $3$s in $\alpha$, we can use Lemma \ref{lem-01unbound}. 

If there is an infinite number of $2$s and $3$s in $\alpha$ then we can use Lemmas  \ref{lem-23unbound} and \ref{lem-0123unbound} as follows. For any $q$ we can find a graph $G$ in $\GGG^\alpha$ that has a vertex-minor $H^\gamma_{1,1}(q,q)$, for some infinite word $\gamma$ using only letters from the alphabet $\{2,3\}$. In turn, $H^\gamma_{1,1}(q,q)$ contains an induced subgraph $W^\gamma_{q/2}$, so using Lemma \ref{vminor} we have 
\[rwd(G)\ge rwd(H^\gamma_{1,1}(q,q)) \ge rwd (W^\gamma_{q/2}).\]
But from Lemma \ref{Walpha} $cwd(W^\gamma_{q/2})\ge q/8 \rightarrow \infty$  as $q \rightarrow \infty$ (and hence also $rwd(W^\gamma_{q/2}) \rightarrow \infty$) so it follows that $\GGG^\alpha$ is a graph class with unbounded rank-width and hence unbounded clique-width.
\end{proof}

%
%
%
%
%
%
\section{Minimality of almost periodic graph classes}\label{Sect:Minimal}


Let $\alpha$ be an infinite almost periodic word from the alphabet $\{0,1,2,3\}$, with at least one non-zero letter.
In this section we will prove that the graph classes $\GGG^\alpha$ are minimal of unbounded clique-width. To do this, we must show that any proper hereditary subclass has bounded clique-width. If $\CCC$ is a proper hereditary subclass of $\GGG^\alpha$ then there must exist a non-trivial finite forbidden graph $F$ that is in $\GGG^\alpha$ but not in $\CCC$. In turn, this graph $F$ must be an induced subgraph of some $H^\alpha_{1,j}(k,k)$ for some $k \ge 2$. 

Consider a graph $G \in \CCC \subseteq \Free(H^\alpha_{1,j}(k,k))$. If there exists an embedding $\phi:V(G) \rightarrow V(\PPP^\alpha)$  straddling columns $C_j,\dots, C_{j+k-1}$ of $\PPP^\alpha$ then there must be limits on the vertices of $\phi(V(G))$ in these columns to avoid creating an induced subgraph isomorphic to $H^\alpha_{1,j}(k,k)$.

Clearly the most obvious thing to avoid is any $k$ complete horizontal rows which would automatically generate the forbidden graph. For graphs only involving letters $0$ and $1$ this is sufficient, and was dealt with in \cite{collins:infinitely-many:}. However, letters $2$ and $3$ are more complex. Lozin in \cite{lozin:minimal-classes:} dealt with the class $\GGG^{2^\infty}$ by introducing the concept of \emph{clusters} and creating a cluster graph which provided a method of defining a partition of the vertices of $G$ which gave the desired boundary on clique-width.   

In considering $\{0,1,2,3\}$ graphs we use a modified version of the cluster graph method used in \cite{lozin:minimal-classes:} which we describe in Section \ref{cluster}.

The following lemma will be used to place a bound on the clique-width of induced subgraphs of $\PPP^\alpha$. Given a graph $G$ and a subset of vertices $U \subseteq V(G)$, 2 vertices of $U$ will be called \emph{U-similar} if they have the same neighbourhood outside $U$. $U$-similarity is an equivalence relation. The number of equivalence classes of $U$ in $G$ will be denoted $\mu_G(U)$ (or $\mu(U)$ when the context is clear). Also, by $G[U]$ we will denote the subgraph of $G$ induced by $U$.  It follows that $U$ is a module of $G$ if and only if $\mu(U)=1$.

\begin{lemma}[Lozin~\cite{lozin:minimal-classes:}] \label{lemma-partitions} 
If the vertices of a graph $G$ can be partitioned into subsets $U_1,U_2, \dots ,U_n$ in such a way that for every $i$
\begin{enumerate}[label=(\alph*)]
\item the clique-width of $G[U_i]$ is at most $k \ge 2$, and 
\item $\mu(U_i) \le l$ and $\mu(U_1 \cup \cdots \cup U_i) \le l$,
\end{enumerate}
then the clique-width of $G$ is at most $kl$.
\end{lemma}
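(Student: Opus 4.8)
The plan is to construct $G$ by a single clique-width expression that processes the parts $U_1, U_2, \dots, U_n$ in order, one at a time, maintaining throughout the following invariant: after $G[U_1 \cup \cdots \cup U_i]$ has been built, its vertices carry at most $l$ distinct labels, two of them sharing a label if and only if they lie in the same $(U_1 \cup \cdots \cup U_i)$-similarity class. Condition~(b) guarantees at most $l$ such classes, so a reserved block $\{1, \dots, l\}$ of the $kl$ labels suffices for them. Once the invariant survives to $i = n$, the whole of $G = G[U_1 \cup \cdots \cup U_n]$ has been assembled with a $kl$-expression, giving $\cwd(G) \le kl$.

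The heart of the argument is the inductive step that incorporates $U_{i+1}$. First I build $G[U_{i+1}]$ as a standalone subexpression. Condition~(a) supplies a $k$-expression for it, and I run that expression but replace every label $a \in \{1, \dots, k\}$ by a pair $(a,b)$, where the second coordinate $b \in \{1, \dots, l\}$ records the $U_{i+1}$-similarity class of the vertex, assigned at its creation and never altered. Each operation acts on the first coordinate alone: a creation $a(v)$ becomes $(a,b)(v)$; an edge operation $\eta_{a,a'}$ becomes the $l^2$ operations $\eta_{(a,b),(a',b')}$ (all valid, since $a \neq a'$ forces $(a,b) \neq (a',b')$); and $\rho_{a \to a'}$ becomes $\rho_{(a,b)\to(a',b)}$ over all $b$. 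This uses the full palette of $kl$ labels but faithfully reproduces $G[U_{i+1}]$ with every vertex additionally tagged by its similarity class. I then collapse the first coordinate with $\rho$ operations, so that $U_{i+1}$ is labelled by its $\le l$ similarity classes alone, occupying a \emph{second} block $\{l+1, \dots, 2l\}$ disjoint from the old block; this is precisely where the hypothesis $k \ge 2$ enters, as it ensures $2l \le kl$.

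Next I take the disjoint union of the already-built $G[U_1 \cup \cdots \cup U_i]$ (labels in $\{1, \dots, l\}$) with this fresh copy of $G[U_{i+1}]$ (labels in $\{l+1, \dots, 2l\}$) and install all edges between the two. The observation that makes this possible with finitely many $\eta$ operations is that the edges between an old similarity class $A$ and a new similarity class $B$ are either all present or all absent: for $u, u' \in A$ and $w, w' \in B$, the $(U_1 \cup \cdots \cup U_i)$-similarity of $u, u'$ gives $uw \in E(G) \iff u'w \in E(G)$, and the $U_{i+1}$-similarity of $w, w'$ gives $u'w \in E(G) \iff u'w' \in E(G)$, whence $uw \in E(G) \iff u'w' \in E(G)$. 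So one $\eta_{a,b}$ per adjacent class-pair (with $a$ in the old block and $b$ in the new) installs exactly the cross edges of $G[U_1 \cup \cdots \cup U_{i+1}]$. Finally I restore the invariant: since enlarging the index set only shrinks the ``outside'', both $(U_1 \cup \cdots \cup U_i)$-similarity and $U_{i+1}$-similarity refine $(U_1 \cup \cdots \cup U_{i+1})$-similarity, so every current label lies inside a single new class; I merge labels accordingly with $\rho$ operations back into $\{1, \dots, l\}$, of which at most $l$ remain by condition~(b). The base case $i = 1$ is this step with an empty old graph.

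I expect the main obstacle to be the bookkeeping rather than any single deep idea: one must check simultaneously that the pair-label simulation genuinely reconstructs $G[U_{i+1}]$, that the two label blocks stay disjoint at the disjoint-union step (exactly where $k \ge 2$ is indispensable), and that the uniformity-of-cross-edges claim together with the refinement claim renders both the $\eta$-phase and the final $\rho$-phase well defined. No step requires more than $kl$ labels at once — the peak occurs either during the pair-labelled construction of $G[U_{i+1}]$ or during the $2l \le kl$ labels present at the cross-edge phase — so the bound $\cwd(G)\le kl$ follows.
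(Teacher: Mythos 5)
The paper does not prove this lemma at all: it is imported verbatim from Lozin~\cite{lozin:minimal-classes:}, so there is no in-paper argument to compare against. Your proof is correct, and it is essentially the argument of the cited source: simulate the $k$-expression of each $G[U_{i+1}]$ with pair labels (expression label, similarity class), collapse to the at most $l$ similarity classes of $U_{i+1}$, join to the previously built $G[U_1\cup\cdots\cup U_i]$ using the fact that cross edges between an old and a new similarity class are all-or-nothing, and then merge labels along the coarser $(U_1\cup\cdots\cup U_{i+1})$-similarity, which both class partitions refine. The bookkeeping points you flag (validity of the pair-label simulation, disjointness of the two blocks via $2l\le kl$, and conflict-free relabelling into $\{1,\dots,l\}$) all check out.
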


\begin{cor} \label{cor-H-cwd}
Suppose for $n \ge 2$ that the $(n-1)$ letter factor $\alpha_j \alpha_{j+1} \cdots \alpha_{j+n-2}$  consists of $t \ge 1$ non-zeros and $n-t-1$ zeros. Then the clique-width of $H^\alpha_{i,j}(m,n)$, ($m \ge 1$), is at most $6t+3$.
\end{cor}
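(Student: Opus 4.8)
The plan is to apply Lozin's partition lemma (Lemma~\ref{lemma-partitions}) to $G = H^\alpha_{i,j}(m,n)$, taking as the partition the $m$ \emph{rows} of the grid, ordered from top to bottom: $U_1$ is the top row of the subgrid, $U_2$ the next, and so on. With this choice I will show that one may take $k=3$ and $l = 2t+1$, so that the lemma gives $\cwd(G) \le kl = 3(2t+1) = 6t+3$. It is worth noting at the outset why the obvious column partition fails: a matching ($0$) next to a column forces all $m$ vertices of that column into distinct $\mu$-classes, so $\mu$ is unbounded. The row partition is designed precisely to make the zeros cost nothing.

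For condition~(a), observe that within a single row there is an edge between horizontally-consecutive vertices $v_{x,c}$ and $v_{x,c+1}$ exactly when the joining letter is not $1$: a matching, a $2$, or a $3$ all join same-row vertices (since $x\le x$ and $x\ge x$), whereas the complement-of-a-matching does not. Hence each $G[U_x]$ is a disjoint union of paths, which has clique-width at most $3$, giving $k=3$.

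The substance is condition~(b), namely $\mu(U_x)\le 2t+1$ and $\mu(U_1\cup\cdots\cup U_x)\le 2t+1$. Fix a prefix $P = U_1\cup\cdots\cup U_x$, the top $x$ rows; the vertices outside $P$ are those in rows strictly below row $x$. The key claim is that the neighbourhood \emph{below row $x$} of a vertex $v_{a,c}\in P$ is determined by its column $c$ alone and is independent of its own row $a$. Indeed, a matching ($\alpha_{c-1}$ or $\alpha_c$ equal to $0$) contributes only same-row neighbours, which lie in $P$; and for the letters $1,2,3$, intersecting the (possibly nested) neighbourhood with the rows strictly below $x$ yields either the empty set or the \emph{entire} block of rows below $x$ in the adjacent column. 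Consequently any two vertices of $P$ in the same column are $P$-similar, so the number of classes is at most the number of columns $c$ some vertex of which has a neighbour below row $x$, plus one class for the vertices with no such neighbour. A column $c$ is of the former type only if $\alpha_{c-1}\in\{1,3\}$ (a left neighbour below) or $\alpha_c\in\{1,2\}$ (a right neighbour below); each of the $t$ non-zero letters activates at most two columns in this way (a $1$ activates both adjacent columns, a $2$ or $3$ exactly one), so there are at most $2t$ of them and $\mu(P)\le 2t+1$. The single-row bound follows from the same count: at most $2t$ of a row's vertices (those in columns adjacent to a non-zero letter) can have any neighbour outside the row, and the remainder share the empty neighbourhood.

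The main obstacle is exactly this collapsing phenomenon underlying condition~(b): one must verify, checking all four letters on both the left ($\alpha_{c-1}$) and right ($\alpha_c$) sides and attending to the boundary rows, that once attention is restricted to the rows outside the current prefix, the nested neighbourhoods from $2$ and $3$ (and the co-matchings from $1$) cease to distinguish vertices lying in the same column. Once this is established the bound $l = 2t+1$ is immediate, and combining it with $k=3$ in Lemma~\ref{lemma-partitions} completes the proof.
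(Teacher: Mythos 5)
Your proof is correct and follows essentially the same route as the paper's: partition $H^\alpha_{i,j}(m,n)$ into its $m$ rows, note each row induces a disjoint union of paths (clique-width at most $3$), and bound $\mu$ by $2t+1$ via the observation that only columns adjacent to a non-zero letter can reach outside a row or prefix, with vertices of a prefix in the same column sharing their neighbourhood below. The only difference is that you spell out the column-collapsing argument that the paper dismisses with ``it is not difficult to see,'' which is a welcome filling-in of detail rather than a new approach.
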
 
\begin{proof}
To build $H^\alpha_{i,j}(m,n)$ we partition it into subsets $U_1,U_2, \dots ,U_m$ by including in $U_k$ the vertices of the $k$-th row of $H^\alpha_{i,j}(m,n)$. This means $G(U_k)$ is a disjoint union of paths so has clique-width at most $3$. Also, only a vertex in $U_k$ that is in a column $s$ such that $\alpha_{s-1}$ and/or $\alpha_{s}$ is non-zero could have a neighbour outside $U_k$, so $\mu(U_k)\le 2t+1$. Also, it is not difficult to see that $\mu(U_1 \cup \cdots \cup U_k) \le 2t+1$, as the vertices in each column have the same neighbourhood in $H^\alpha_{i,j}(m,n)$ outside $U_1 \cup \cdots \cup U_k$. Therefore, the result follows by applying Lemma \ref{lemma-partitions}. 
\end{proof}

We will also make  use of the following lemma. 
\begin{lemma}[{Courcelle and Olariu~\cite[Corollary $3.6$]{courcelle:upper-bounds-to:}}]\label{lemma-prime}
For every graph $G$, \[\cwd(G) = \max\{\cwd(H) \mid H \in \Prime(G)\}.\]
\end{lemma}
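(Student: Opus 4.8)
Since Lemma~\ref{lemma-prime} is quoted from Courcelle and Olariu, I will sketch how I would reconstruct it from scratch, via the \emph{modular decomposition} of $G$. The plan is to prove the two inequalities separately, the first being routine and the second an induction on $|V(G)|$.

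For $\cwd(G) \ge \max\{\cwd(H) : H \in \Prime(G)\}$ I would simply note that each $H \in \Prime(G)$ is by definition an induced subgraph of $G$, and that clique-width is monotone under taking induced subgraphs (restrict a $k$-expression for $G$ by discarding the creation operations of the deleted vertices), so $\cwd(H) \le \cwd(G)$ for every such $H$.

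The substance is the reverse inequality. Writing $k^* = \max\{\cwd(H) : H \in \Prime(G)\}$, I would induct on $|V(G)|$ using the modular decomposition. The base case $|V(G)|=1$ is immediate. Otherwise exactly one of three situations occurs: (i) $G$ is disconnected, $G = G_1 \oplus \cdots \oplus G_m$; (ii) $\overline{G}$ is disconnected, so $G$ is a join of $G_1, \ldots, G_m$; or (iii) the maximal proper modules $M_1, \ldots, M_n$ of $G$ have a prime quotient graph $Q$. In every case $G = Q[G_1, \ldots, G_n]$ is a substitution, with $Q$ equal to $\overline{K_m}$, $K_m$, or the prime quotient respectively, and $G_i = G[M_i]$. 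Two structural observations drive the induction: any prime induced subgraph of a part $G_i$ is again a prime induced subgraph of $G$, so $\Prime(G_i)\subseteq\Prime(G)$ and hence $\cwd(G_i)\le k^*$ by the inductive hypothesis; and in case (iii) a transversal taking one vertex from each module induces a copy of $Q$, so $Q\in\Prime(G)$ and $\cwd(Q)\le k^*$.

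The remaining ingredient, and what I expect to be the crux, is the substitution bound $\cwd(Q[G_1,\ldots,G_n]) \le \max(\cwd(Q), \max_i \cwd(G_i))$. I would prove it constructively: take optimal expressions for $Q$ and each $G_i$, then follow the expression tree of $Q$ but replace the leaf creating vertex $i$ with label $c_i$ by a subexpression that builds $G_i$ internally and then relabels all of its vertices to the single label $c_i$. Since a completed $G_i$ is a module of $G$, every inherited $\eta$ and $\rho$ operation from the expression for $Q$ treats the whole block $G_i$ exactly as it would the single vertex $i$, so all substitution joins are produced correctly. The delicate part is the label accounting: one fixed alphabet of size $\max(\cwd(Q),\max_i\cwd(G_i))$ suffices only because the internal labels of each $G_i$ are collapsed to $c_i$ before that block is ever combined through $\oplus$ with another, so no two blocks' internal constructions interfere. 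Feeding this bound back into the three cases finishes the proof: cases (i) and (ii) give $\max(1\text{ or }2,\max_i\cwd(G_i))\le k^*$, where the value $2=\cwd(K_m)$ is harmless because a join contains an edge and thus $K_2\in\Prime(G)$ forces $k^*\ge2$; case (iii) gives $\max(\cwd(Q),\max_i\cwd(G_i))\le k^*$ directly.
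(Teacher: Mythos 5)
The paper does not prove this lemma at all: it is imported verbatim as a citation to Courcelle and Olariu (their Corollary~3.6), so there is no internal argument to compare against. Your blind reconstruction is correct and is essentially the standard (and the cited source's) route: monotonicity of $\cwd$ under induced subgraphs for one inequality, and for the other an induction over the modular decomposition driven by the substitution bound $\cwd(Q[G_1,\dots,G_n])\le\max(\cwd(Q),\max_i\cwd(G_i))$, with the two genuinely delicate points handled correctly --- the reuse of one fixed label alphabet across blocks that are collapsed to a single label before any $\oplus$ with the rest of the expression, and the observation that under this paper's definition of prime (all modules trivial) we have $K_2\in\Prime(G)$ whenever $G$ contains an edge, which absorbs the $\cwd(K_m)=2$ cost in the join case.
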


Thus we can assume that our arbitrary graph $G$ is prime, and therefore connected, since if it were not so, we could any prime subgraph $H$ which has the same clique-width.

%
%
%
%
%
\subsection{Cluster graphs}\label{cluster}

Consider a connected graph $G$ embedded in $\PPP^\alpha$ such that its leftmost vertex is in column $C_a$ and rightmost vertex in column $C_{a+n-1}$. Let a \emph{left module} of $G\cap C_j$ be a maximal set of  vertices in that column that are indistinguishable by vertices in $G\cap C_{j-1}$. Similarly, a \emph{right module} of $G\cap C_j$ is a maximal set of  vertices that are indistinguishable by vertices in $G\cap C_{j+1}$. Thus the vertices of every column of $G$, except the leftmost and rightmost columns, can be partitioned in two ways, as a set of left modules or as a set of right modules.

Now consider an $\alpha_j$-link, $G_j$ (the subgraph of $G$ induced by the vertices of $G\cap C_j$ and $G\cap C_{j+1}$). 
For convenience, we will say $G_j$ is in \emph{standard form} if, without changing the vertical order of the vertices, it is presented as an induced subgraph of $H^\alpha_{1,j}(m,2)$ with minimum $m$ (i.e.\ taking out any superfluous gaps).
Let $G_j^s$ be the standard form of $G_j$, noting that the (left and right) modules of $G_j$ and $G_j^s$ are identical.  Suppose $R$ is the set of right modules of $G_j^s\cap C_j$ and $L$ the set of left modules of $G_j^s\cap C_{j+1}$. We will say that two modules $A$ and $B$ \emph{overlap} if the set of rows containing vertices of $A$ has non-zero intersection with the set of rows containing vertices of $B$. It can be seen that a right module in $R$ can only overlap with a left module in $L$ on at most one row, for if they overlapped on two or more rows they would no longer be (right/left) modules. Furthermore, a right module in $R$ cannot overlap with more than one left module in $L$ and vice-versa.

If a right module in $R$ overlaps with a left module in $L$, the two modules can be paired to form a \emph{cluster}. This pairing process is well-defined and matches all such modules except for at most one unmatched right module in $G \cap C_j$ and one unmatched left module in $G \cap C_{j+1}$. These unmatched right/left modules have the characteristic they are indistinguishable to all vertices in the column to the right/left respectively. We will refer to them as right/left \emph{boundary modules} and the vertices in them as right/left \emph{boundary vertices} respectively. We put each boundary vertex in its own cluster in $G_j$. 
 
Hence, the clusters of $G_j$ form a partition of the vertices of the $\alpha_j$-link. If $\alpha_j$ is $0$ or $1$ then every cluster in $G_j$ is either a horizontal pair of vertices or a boundary vertex. If $\alpha_j$ is $2$ or $3$ then each cluster is a complete bipartite induced subgraph of $G_j$ or a boundary vertex. When there are no boundary vertices, $G_j$ is isomorphic to $H^\alpha_{1,j}(m,2)$ consisting of $m$ clusters, each containing two vertices of a same row. 

At this stage, the vertices in the leftmost ($G \cap C_a$) and rightmost ($G \cap C_{a+n-1}$) columns are each only in one cluster as they only appear in one $\alpha_j$-link. We now add two additional columns of clusters, one to the left of $G$ with a cluster for each vertex of $G \cap C_a$, and one to the right of $G$ with a cluster for each vertex of column $G \cap C_{a+n-1}$. Thus the vertices of every column $G\cap C_j$ of $G$ are now in two clusters.

With any finite induced subgraph $G$ of $\PPP^\alpha$ we associate an oriented graph which we call the \emph{cluster graph} $B(G)$, whose vertices are each associated with one of the clusters of  $G$.  The vertices of $B(G)$ representing clusters of the same $\alpha_j$-link, $G_j$, we call a column of $B(G)$, and denote this by $B(G_j)$. Of the two additional cluster columns defined in the previous paragraph we will call the one on the left $B(G_{a-1})$ and the one on the right $B(G_{a+n})$. For ease of exposition, we will always present the vertices of $B(G_j)$ in the same vertical order as in $G$.

In the following we denote the $i$-th cluster of $G_j$, counting from top to bottom, as $K_{i,j}$, with corresponding vertex, $u_{i,j}$ in $B(G_j)$. The edges of $B(G)$ are defined as follows.

\begin{description}
\item [Type A] If $K_{r,j}$ has a vertex of $G$ in common with cluster $K_{s,j+1}$ then $B(G)$ has a directed edge $(u_{r,j},u_{s,j+1})$ (i.e an edge oriented from $u_{r,j}$ to $u_{s,j+1}$). Note that if there was more than one vertex of $G$ in the intersection between two clusters of $G$, then these form a module of size greater than one and $G$ is not prime, a contradiction. Hence, any two clusters of $G$ have at most one vertex in the intersection. It follows that each Type A edge of $B(G)$ corresponds to a vertex of $G$. 
\item [Type B] Let $u_{i,j}$ and $u_{i+1,j}$ be two consecutive vertices in a column of $B(G)$. If $\alpha_j = 2$ then $B(G)$ has a directed edge $(u_{i,j},u_{i+1,j})$ and if $\alpha_j = 3$ then $B(G)$ has a directed edge $(u_{i+1,j},u_{i,j})$. 
\end{description}

Edges of type A are oriented from left to right and go between consecutive columns of $B(G)$ whilst edges of type B are oriented down when $\alpha_j = 2$ and up when $\alpha_j = 3$. Drawing $B(G)$ by arranging the vertices in columns in the same order as the respective clusters of $G$ it becomes clear that $B(G)$ is a planar graph.

If we have a right boundary vertex in $C_j$ then it will be in both a cluster in $G_{j-1}$, say $K_{r,j-1}$, and a singleton cluster in $G_{j}$, say $K_{s,j}$.  The two vertices, $u_{r,j-1}$ and $u_{s,j}$ in $B(G)$ associated with these clusters will be joined by a type A edge. However, there can be no type A edge to the immediate right of $u_{s,j}$ as $K_{s,j}$ contains no vertex from column $C_{j+1}$. Similarly for left boundary vertices, mutatis mutandis.

%
%

\begin{figure}\centering
{\centering
\begin{tikzpicture}[scale=0.65]
\foreach \i in {1,...,7}
	\foreach \j in {1,...,7}
		\node[fill=black!20,draw=black!20] at (\i,\j) {};
\foreach \jset [count=\i] in {{1,2,3,5,6}, {1,2,3,5,6,7}, {1,2,3,4,5,7},{1,2,4,5,7},{1,3,4,5,6,7},{1,2,3,4,6,7},{1,2,3,5,6,7}}
 	\foreach \j in \jset {
 		\node (\i-\j) at (\i,\j) {};
 	}
\foreach \j in {1,2,3,5,6}
	\draw (1-\j) -- (2-\j);
\foreach \j/\kset in {1/{2,3,4,5,7},2/{1,3,4,5,7},3/{1,2,4,5,7},5/{1,2,3,4,7},6/{1,2,3,4,5,7},7/{1,2,3,4,5}}
	\foreach \k in \kset
		\draw (2-\j) -- (3-\k);
\foreach \j in {1,2,4,5,7}
	\draw (3-\j) -- (4-\j);
\foreach \j/\kset in {1/{1},2/{1},4/{1,3,4},5/{1,3,4,5},7/{1,3,4,5,6,7}} 
	\foreach \k in \kset
		\draw (4-\j) -- (5-\k);
\foreach \j/\kset in {1/{1,2,3,4,6,7},3/{3,4,6,7},4/{4,6,7},5/{6,7},6/{6,7},7/{7}} 
	\foreach \k in \kset
		\draw (5-\j) -- (6-\k);
\foreach \j/\kset in {1/{1},2/{1,2},3/{1,2,3},4/{1,2,3},6/{1,2,3,5,6},7/{1,2,3,5,6,7}} 
	\foreach \k in \kset
		\draw (6-\j) -- (7-\k);
\begin{scope}[shift={(8,0)}]
\foreach \jset [count=\i] in {{1,2,3,5,6}, {1,2,3,5,6,7}, {1,2,3,4,5,7},{1,2,4,5,7},{1,3,4,5,6,7},{1,2,3,4,6,7},{1,2,3,5,6,7}}
 	\foreach \j in \jset {
 		\node (\i-\j) at (\i,\j) {};
 	}
\foreach \j in {1,2,3,5,6}
	\draw (1-\j) -- (2-\j);
\foreach \j/\kset in {1/{2,3,4,5,7},2/{1,3,4,5,7},3/{1,2,4,5,7},5/{1,2,3,4,7},6/{1,2,3,4,5,7},7/{1,2,3,4,5}}
	\foreach \k in \kset
		\draw (2-\j) -- (3-\k);
\foreach \j in {1,2,4,5,7}
	\draw (3-\j) -- (4-\j);
\foreach \j/\kset in {1/{1},2/{1},4/{1,3,4},5/{1,3,4,5},7/{1,3,4,5,6,7}} 
	\foreach \k in \kset
		\draw (4-\j) -- (5-\k);
\foreach \j/\kset in {1/{1,2,3,4,6,7},3/{3,4,6,7},4/{4,6,7},5/{6,7},6/{6,7},7/{7}} 
	\foreach \k in \kset
		\draw (5-\j) -- (6-\k);
\foreach \j/\kset in {1/{1},2/{1,2},3/{1,2,3},4/{1,2,3},6/{1,2,3,5,6},7/{1,2,3,5,6,7}} 
	\foreach \k in \kset
		\draw (6-\j) -- (7-\k);
\tikzstyle{cvxpath}=[thick,fill,fill opacity=0.2];
\begin{pgfonlayer}{background}
\draw[green!50!black,cvxpath] (1-1) circle (0.2cm);
\draw[green!50!black,cvxpath] (1-2) circle (0.2cm);
\draw[green!50!black,cvxpath] (1-3) circle (0.2cm);
\draw[green!50!black,cvxpath] (1-5) circle (0.2cm);
\draw[green!50!black,cvxpath] (1-6) circle (0.2cm);
\draw[red,cvxpath] (2-7) circle (0.2cm);
\draw[red,cvxpath] \convexpath{1-1,2-1}{0.2cm};
\draw[red,cvxpath] \convexpath{1-2,2-2}{0.2cm};
\draw[red,cvxpath] \convexpath{1-3,2-3}{0.2cm};
\draw[red,cvxpath] \convexpath{1-5,2-5}{0.2cm};
\draw[red,cvxpath] \convexpath{1-6,2-6}{0.2cm};
\draw[blue,cvxpath] (2-3) circle (0.2cm);
\draw[blue,cvxpath] (2-6) circle (0.2cm);
\draw[blue,cvxpath] (3-4) circle (0.2cm);
\draw[blue,cvxpath] \convexpath{2-1,3-1}{0.2cm};
\draw[blue,cvxpath] \convexpath{2-2,3-2}{0.2cm};
\draw[blue,cvxpath] \convexpath{2-3,3-3}{0.2cm};
\draw[blue,cvxpath] \convexpath{2-5,3-5}{0.2cm};
\draw[blue,cvxpath] \convexpath{2-7,3-7}{0.2cm};
\draw[green!50!black,cvxpath] (3-3) circle (0.2cm);
\draw[green!50!black,cvxpath] \convexpath{3-1,4-1}{0.2cm};
\draw[green!50!black,cvxpath] \convexpath{3-2,4-2}{0.2cm};
\draw[green!50!black,cvxpath] \convexpath{3-4,4-4}{0.2cm};
\draw[green!50!black,cvxpath] \convexpath{3-5,4-5}{0.2cm};
\draw[green!50!black,cvxpath] \convexpath{3-7,4-7}{0.2cm};
\draw[red,cvxpath] \convexpath{4-2,5-1,4-1}{0.2cm};
\draw[red,cvxpath] \convexpath{4-5,5-5}{0.2cm};
\draw[red,cvxpath] \convexpath{4-4,5-4,5-3}{0.2cm};
\draw[red,cvxpath] \convexpath{4-7,5-7,5-6}{0.2cm};
\draw[blue,cvxpath] \convexpath{5-1,6-2,6-1}{0.2cm};
\draw[blue,cvxpath] \convexpath{5-3,6-3}{0.2cm};
\draw[blue,cvxpath] \convexpath{5-4,6-4}{0.2cm};
\draw[blue,cvxpath] \convexpath{5-5,5-6,6-6}{0.2cm};
\draw[blue,cvxpath] \convexpath{5-7,6-7}{0.2cm};
\draw[green!50!black,cvxpath] \convexpath{6-1,7-1}{0.2cm};
\draw[green!50!black,cvxpath] \convexpath{6-2,7-2}{0.2cm};
\draw[green!50!black,cvxpath] \convexpath{6-3,6-4,7-3}{0.2cm};
\draw[green!50!black,cvxpath] \convexpath{6-6,7-6,7-5}{0.2cm};
\draw[green!50!black,cvxpath] \convexpath{6-7,7-7}{0.2cm};
\draw[red,cvxpath] (7-1) circle (0.2cm);
\draw[red,cvxpath] (7-2) circle (0.2cm);
\draw[red,cvxpath] (7-3) circle (0.2cm);
\draw[red,cvxpath] (7-5) circle (0.2cm);
\draw[red,cvxpath] (7-6) circle (0.2cm);
\draw[red,cvxpath] (7-7) circle (0.2cm);
\end{pgfonlayer}
\end{scope}
\begin{scope}[shift={(16.5,0)}]
\foreach \j in {1,2,3,5,6}
	\node[green!50!black] (0-\j) at (0.5,\j) {};
\foreach \j in {1,2,3,5,6,7}
	\node[red] (1-\j) at (1.5,\j) {};
\foreach \j in {1,2,3,4,5,6,7}
	\node[blue] (2-\j) at (2.5,\j) {};
\foreach \j in {1,2,3,4,5,7}
	\node[green!50!black] (3-\j) at (3.5,\j) {};
\foreach \j [count=\i] in {1.5,3.5,5,6.5}
	\node[red] (4-\i) at (4.5,\j) {};
\foreach \j [count=\i] in {1.5,3,4,5.5,7}
	\node[blue] (5-\i) at (5.5,\j) {};
\foreach \j [count=\i] in {1,2,3.5,5.5,7}
	\node[green!50!black] (6-\i) at (6.5,\j) {};
\foreach \j in {1,2,3,5,6,7}
	\node[red] (7-\j) at (7.5,\j) {};
	\begin{scope}[decoration={
    	markings,
    	mark=at position 0.5 with {\arrow{>}}}] 
	\foreach \j in {1,2,3,5,6}
		\draw[postaction={decorate}] (0-\j) -- (1-\j);
	\foreach \j in {1,2,3,5,6,7}
		\draw[postaction={decorate}] (1-\j) -- (2-\j);
	\foreach \j in {1,2,3,4,5,7}
		\draw[postaction={decorate}] (2-\j) -- (3-\j);
	\foreach \i/\j in {1/1,2/1,4/2,5/3,7/4}
		\draw[postaction={decorate}] (3-\i) -- (4-\j);
	\foreach \i/\j in {1/1,2/2,2/3,3/4,4/4,4/5}
		\draw[postaction={decorate}] (4-\i) -- (5-\j);
	\foreach \i/\j in {1/1,1/2,2/3,3/3,4/4,5/5}
		\draw[postaction={decorate}] (5-\i) -- (6-\j);
	\foreach \i/\j in {1/1,2/2,3/3,4/5,4/6,5/7}
		\draw[postaction={decorate}] (6-\i) -- (7-\j);
	\foreach \i/\j in {1/2,2/3,3/4}
		\draw[dotted,postaction={decorate}] (4-\j) -- (4-\i);
	\foreach \i/\j in {1/2,2/3,3/4,4/5} {%
		\draw[dotted,postaction={decorate}] (5-\i) -- (5-\j);
		\draw[dotted,postaction={decorate}] (6-\j) -- (6-\i);
	}
	\end{scope}
\end{scope}
\end{tikzpicture}
\par}

\caption{For $\alpha=010232\cdots$, an illustration of the formation of a cluster graph. From left to right: an embedding of $G^*$ in $H_{1,1}(7,7)$, the clusters associated with this embedding, and the cluster graph $B^*=B(G^*)$, with type B edges indicated by dotted lines.}
	\label{fig:cluster}

\end{figure}
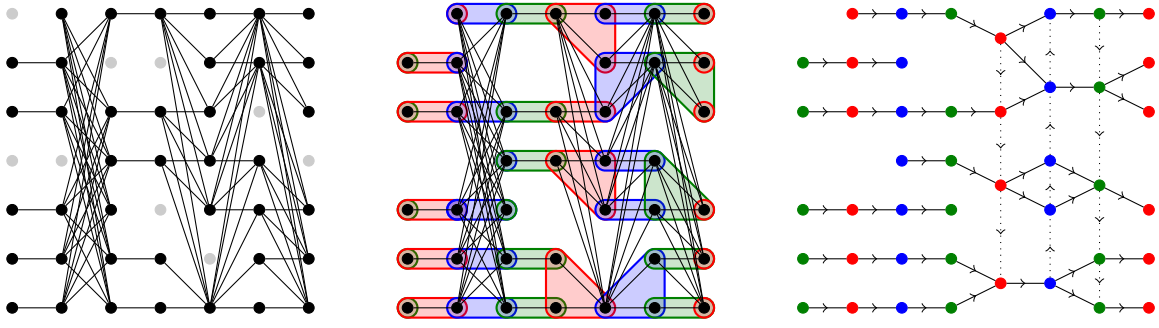
%
%

An example of a cluster graph is shown in Figure \ref{fig:cluster}.

As we assume $G$ is prime (from Lemma \ref{lemma-prime}) and therefore connected, when it is embedded in $\PPP^\alpha$ it must occupy vertices in consecutive columns (i.e. if it has one or more vertices in columns $C_x$ and $C_{x+2}$ then it must also have at least one vertex in column $C_{x+1}$). Suppose $G$ straddles a set of columns including the $k$ columns $C_j, \dots, C_{j+k-1}$ with defining factor $\beta=\alpha_{j} \alpha_{j+1}\cdots \alpha_{j+k-2}$. Let us denote the subgraph of $G$ induced by these columns $G^*$. The respective graph $B(G^*)$ will be denoted by $B^*$; it has $k+1$ columns denoted by $B_1, \dots, B_{k+1}$. It can be seen that if $B^*$ has k directed disjoint paths from column $B_1$ to column $B_{k+1}$ then $G^*$ contains the forbidden subgraph $H^\alpha_{1,1}(k,k)$, although the reverse is not necessarily true.

This leads to the following result:
\begin{lemma}\label{lemma-paths}
Let $\CCC$ be a proper subclass of $\GGG^\alpha$ such that $\CCC \subseteq \Free(H^\alpha_{1,j}(k,k))\subset \GGG^\alpha$. Furthermore, let $G$ be any graph in $\CCC$ with  induced subgraph $G^*$ and associated cluster graph $B^*$ defined as above. Then $B^*$ can have at most $k-1$ directed paths from column $B_1$ to column $B_{k+1}$.
\end{lemma}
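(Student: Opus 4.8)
The plan is to prove the bound by contraposition, using the remark stated immediately before the lemma: it suffices to show that if $B^*$ admits $k$ pairwise vertex-disjoint directed paths from $B_1$ to $B_{k+1}$, then $G^*$ contains an induced copy of $H^\alpha_{1,j}(k,k)$. Since $G^*$ is an induced subgraph of $G \in \CCC \subseteq \Free(H^\alpha_{1,j}(k,k))$, the graph $G$ — and hence $G^*$ — is $H^\alpha_{1,j}(k,k)$-free, so no such family of $k$ disjoint paths can exist, and the maximum number of vertex-disjoint directed $B_1$–$B_{k+1}$ paths is at most $k-1$, as claimed. The entire content therefore lies in verifying the remark.

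First I would pin down how a single directed path $P$ from $B_1$ to $B_{k+1}$ meets the columns of $G^*$. Because Type A edges are the only edges joining consecutive cluster-columns and are oriented rightwards, while Type B edges stay within a single cluster-column, any directed path from the leftmost to the rightmost cluster-column must increase its column index from $1$ to $k+1$ and can do so only via Type A edges; hence it crosses each of the $k$ inter-column gaps exactly once, using precisely one Type A edge per gap. Under the bijection between Type A edges of $B^*$ and vertices of $G^*$ (each Type A edge is the unique vertex shared by its two clusters, by primeness of $G$), the gap-$i$ edge of $P$ corresponds to a vertex of $G^*$ lying in column $C_{j+i-1}$. Thus $P$ selects exactly one vertex in each of the $k$ columns $C_j,\dots,C_{j+k-1}$.

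Next, given $k$ pairwise vertex-disjoint paths $P_1,\dots,P_k$, vertex-disjointness in $B^*$ forces them to use distinct Type A edges, hence distinct vertices of $G^*$; so in each column $C_{j+i-1}$ they select $k$ distinct vertices, and $k^2$ in total. It remains to show this $k\times k$ set induces $H^\alpha_{1,j}(k,k)$. Here I would use two ingredients: the planarity of $B(G)$, noted when the cluster graph was defined, which lets me take the disjoint paths to be non-crossing and hence to inherit a consistent top-to-bottom vertical order $P_1,\dots,P_k$; and the cluster structure of each link together with the Type B orientations, which encode the $\le$/$\ge$ order arising from the letters $2$/$3$. With the ordering fixed, I would argue that in each column the selected vertices appear in the grid's row order, and that the adjacency between the vertex chosen by $P_\ell$ in $C_{j+i-1}$ and the vertex chosen by $P_{\ell'}$ in $C_{j+i}$ is exactly the one prescribed by $\alpha_{j+i-1}$; together with bipartiteness (no edges within a column or between non-consecutive columns) this identifies the induced subgraph with $H^\alpha_{1,j}(k,k)$.

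The main obstacle is this last identification: verifying that non-crossing disjoint paths really do realise the selected vertices in the correct vertical order, so that each link restricts to the right pattern (a matching for $0$, its complement for $1$, and the triangular relations for $2$/$3$) rather than to some other bipartite graph on $2k$ vertices. The planarity of $B(G)$ and the Type B edges are precisely the tools that control this ordering; once the ordering is fixed, checking that each link between consecutive selected columns matches its defining letter is a short case analysis over the four letters. Everything else — the reduction to the remark and the final contradiction — is then immediate.
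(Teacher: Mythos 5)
Your proposal is correct and takes essentially the same route as the paper: the paper supplies no separate proof of this lemma, deriving it immediately from the preceding remark that $k$ disjoint directed paths from $B_1$ to $B_{k+1}$ would force an induced copy of $H^\alpha_{1,j}(k,k)$ in $G^*$, contradicting $\CCC \subseteq \Free(H^\alpha_{1,j}(k,k))$ --- which is exactly your reduction, under the intended reading of ``paths'' as vertex-disjoint paths. Your additional sketch (one Type A edge per gap, the edge--vertex bijection via primeness, planarity and the Type B orientations controlling the vertical order, then a per-letter check) fills in details that the paper leaves at the level of ``it can be seen that''.
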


%
%
%
%
%
\subsection{Applying Menger's Theorem}\label{mengthm}

We will be using Menger's Theorem to help us define a partition of the vertices of $G$ on which to apply Lemma \ref{lemma-partitions}. Menger's Theorem is one of the cornerstones of graph theory.

\begin{thm}[Menger, see e.g.\ Diestel~\cite{diestel:graph-theory5:}] \label{Menger} 
Let $G$ be a directed graph and $A,B \subseteq V(G)$. Then the minimum number $n$ of vertices separating $A$ from $B$ in $G$ is equal to the maximum number of disjoint directed $A \rightarrow B$ paths in $G$.
\end{thm}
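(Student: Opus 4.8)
The plan is to prove the two inequalities separately. The easy direction, that the maximum number of vertex-disjoint directed $A \to B$ paths is at most the minimum size of a set separating $A$ from $B$, is immediate: if $S$ is any set of vertices meeting every directed $A \to B$ path, then each path in a disjoint family must use at least one vertex of $S$, and distinct disjoint paths use distinct such vertices, so the family has size at most $|S|$. Taking $S$ of minimum size and the family of maximum size yields $\le$.

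The substance is the reverse inequality: if every set separating $A$ from $B$ has at least $n$ vertices, then $G$ contains $n$ vertex-disjoint directed $A \to B$ paths. I would prove this by induction on the number of edges $|E(G)|$, which is essentially the argument in Diestel to which the statement is attributed. In the base case $E(G) = \emptyset$ the only $A \to B$ paths are the single vertices of $A \cap B$, which is then itself the unique minimum separator, so the two quantities agree. For the inductive step, fix an edge $e = (x,y)$ and delete it. If $G - e$ still has minimum separator size $\ge n$, the inductive hypothesis supplies $n$ disjoint paths in $G - e \subseteq G$, and we are done.

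The remaining, genuinely delicate, case is when deleting $e$ drops the minimum separator below $n$: then $G - e$ has a separator $S$ with $|S| = n-1$, while $S \cup \{x\}$ and $S \cup \{y\}$ are both $A$-$B$ separators of $G$, since in $G$ the only $A \to B$ paths not caught by $S$ are those using $e$, and these pass through $x$ and then $y$. (Note $x,y \notin S$, as otherwise one of these sets would have size $n-1 < n$.) One then forms two smaller instances: roughly, the part of $G$ lying before the cut $S \cup \{x\}$, with $B$ replaced by $S \cup \{x\}$, and the part lying after $S \cup \{y\}$, with $A$ replaced by $S \cup \{y\}$. Each has a strictly smaller edge set and minimum separator exactly $n$, so induction yields $n$ disjoint $A \to (S \cup \{x\})$ paths and $n$ disjoint $(S \cup \{y\}) \to B$ paths; splicing matching halves through the $n-1$ vertices of $S$, and joining the two halves ending at $x$ and starting at $y$ via the edge $e$, produces $n$ disjoint $A \to B$ paths in $G$.

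I expect the main obstacle to be exactly this last step: checking that the two sub-instances really have fewer edges (so that the induction is well-founded) and that the two families of half-paths meet $S$ compatibly, so they concatenate into internally disjoint $A \to B$ paths with no vertex collisions, with $e$ used precisely to join the halves through $x$ and $y$. An alternative route that sidesteps the casework is to reduce to the max-flow min-cut theorem: split each vertex $v$ into an arc $v^{-} \to v^{+}$ of unit capacity, give every original arc infinite capacity, and attach a super-source to $A$ and a super-sink from $B$; then integral maximum flows correspond to disjoint path families and finite minimum cuts to separators. I would keep the inductive proof as the primary argument, since it is self-contained, but note the flow reduction as a cross-check.
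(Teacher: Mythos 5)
The paper contains no proof of this statement: Menger's theorem is imported as a known classical result, cited to Diestel, and only the derived Corollary~\ref{cor-Menger} is actually proved in the text. Your argument is therefore not paralleled by anything in the paper itself; it is, however, a correct rendition of the standard proof in the very reference cited (G\"oring's induction on $|E(G)|$): the easy inequality, the base case, the edge-deletion dichotomy, and the splicing of the two path systems through the $(n-1)$-element separator $S$ are all set up correctly. The two points you flag as delicate do go through by short standard arguments, and they are worth recording: first, every $A$--$(S\cup\{x\})$ separator $T$ of $G-e$ is in fact an $A$--$B$ separator of $G$ (an $A\to B$ path of $G$ either uses $e$, in which case its prefix up to $x$ is an $A\to(S\cup\{x\})$ path of $G-e$ and so meets $T$, or it avoids $e$, in which case it meets $S$ and its prefix up to $S$ again meets $T$), which justifies your claim that the sub-instances have minimum separator size exactly $n$; second, a path of the first system and a path of the second system can share vertices only inside $S$, since a common vertex outside $S$ would allow you to concatenate an $A\to B$ walk in $G-e$ avoiding $S$, a contradiction. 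One simplification you may want to adopt: run both sub-instances on $G-e$ itself, with terminal sets $(A,\,S\cup\{x\})$ and $(S\cup\{y\},\,B)$, rather than on ``parts'' of $G$; then the well-foundedness of the induction is immediate and no decomposition of $G$ into pieces on either side of the cut needs to be defined.
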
 

\begin{cor} \label{cor-Menger}
If $S$ is a set of $n$ vertices forming such a separator, then  there exists a partition of $V(G) \setminus S$ into two sets $X$ and $Y$ such that there are no edges directed from a vertex in $X$ to a vertex in $Y$.  
\end{cor}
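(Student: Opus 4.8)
The plan is to produce the partition explicitly from reachability in the graph with $S$ deleted. Writing $G-S$ for the directed graph obtained from $G$ by removing the vertices of $S$, I would set $X$ to be the collection of all vertices $v\in V(G)\setminus S$ that are reachable from $A$ by a directed path using no vertex of $S$ (equivalently, reachable from $A\setminus S$ within $G-S$), and then let $Y=(V(G)\setminus S)\setminus X$. First I would note that this is indeed a partition of $V(G)\setminus S$: the two parts are disjoint with union $V(G)\setminus S$ by construction, and either part is permitted to be empty without affecting the statement.

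The one substantive step is to check that no edge of $G$ is directed from a vertex of $X$ to a vertex of $Y$. Suppose $(x,y)$ is a directed edge with $x\in X$. Because $x\in X$, there is an $S$-avoiding directed path $P$ from $A$ to $x$; its initial vertex necessarily lies in $A\setminus S$. If moreover $y\notin S$ (which holds whenever $y\in Y$, since $Y\subseteq V(G)\setminus S$), then appending the edge $(x,y)$ to $P$ yields an $S$-avoiding directed path from $A$ to $y$, so $y\in X$. Consequently every out-neighbour of a vertex of $X$ lies in $X\cup S$, and in particular none lies in $Y$; this is precisely the required property. In other words, the defining feature of $X$ is that it is forward-closed (closed under out-neighbours) within $V(G)\setminus S$, and taking the reachability closure of $A$ in $G-S$ is exactly what guarantees this.

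Finally, although it is not needed for the bare statement, I would record the compatibility with the separator that is used in the application: since $S$ separates $A$ from $B$, no vertex of $B\setminus S$ can be reached from $A$ along an $S$-avoiding path, so $B\setminus S\subseteq Y$, while $A\setminus S\subseteq X$ trivially (each such vertex reaches itself). Thus $X$ and $Y$ really do sit on the $A$-side and $B$-side of the separator. The honest remark here is that there is no genuine obstacle: the entire content lies in choosing $X$ to be the reachability closure of $A$ in $G-S$, after which forward-closedness is immediate. The only points requiring care are the convention that an $S$-avoiding path must begin in $A\setminus S$, and the harmless degenerate case in which $X$ or $Y$ is empty.
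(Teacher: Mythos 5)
Your proof is correct, and its core idea --- taking $X$ to be the set of vertices reachable from $A$ by $S$-avoiding directed paths --- is the same as the paper's. The difference lies in how the rest of the partition is handled, and here your route is genuinely cleaner. The paper defines $Y$ symmetrically as the set of vertices from which a directed path reaches $B$, invokes Menger's theorem to see that $X$ and $Y$ are disjoint, and then assigns any remaining vertices ``to either arbitrarily.'' Read literally, that last step is flawed: two leftover vertices $u,w$ (neither reachable from $A$ nor able to reach $B$) may be joined by an edge $u \to w$, and assigning $u$ to $X$ and $w$ to $Y$ creates an edge directed from $X$ to $Y$ without producing any $A \to B$ path; the assignment must be made consistently, which is exactly what your choice $Y = (V(G)\setminus S)\setminus X$ does automatically, since your forward-closure argument handles leftover vertices and $B$-side vertices uniformly. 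Your version also makes plain that the partition property itself requires no separation hypothesis at all --- closure of the reachability set under out-neighbours in $V(G)\setminus S$ is enough --- whereas the separator is only needed for the compatibility $A\setminus S \subseteq X$ and $B\setminus S\subseteq Y$. You are right to record that compatibility explicitly: the bare statement is satisfied trivially by $X = V(G)\setminus S$, $Y = \emptyset$, and it is the positioning of $X$ on the $A$-side and $Y$ on the $B$-side that the application in Section~\ref{mengthm} actually relies on; both your construction and the paper's provide it.
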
 
\begin{proof}
Let $X$ denote the set of vertices $V(G) \setminus S$ that can be reached from $A$ by following directed paths, and let $Y$ denote the set of vertices of $V(G) \setminus S$ from which there starts a directed path that ends in a vertex of $B$. Now, $X$ and $Y$ are disjoint (by Menger's theorem). If there are any vertices of $V(G) \setminus S$ that lie in neither $X$ nor $Y$, we can assign them to either arbitrarily.  Now every edge with one endpoint in $X$ and the other in $Y$ must be oriented from $Y$ to $X$, otherwise we find a directed path from $A$ to $B$.
\end{proof}

We can apply this to the cluster graph $B^*$  referred to in Lemma \ref{lemma-paths} with columns $B_1$ and $B_{k+1}$ connected to each other by a set $P$ of at most $k-1$ disjoint paths. Denote $s = |P|$. The $s$ paths of $P$ cut $B^*$ into $s+1$ horizontal \emph{stripes}, that is, subgraphs induced by two consecutive paths in $P$ and all the vertices between them ($s-1$ such stripes) and $2$ further stripes for the subgraphs induced by the top path and all vertices above it, and the bottom path and all vertices below it.  

From Menger's Theorem these two columns can be separated from each other by a set $S$ of $s \le k-1$ vertices, containing exactly one vertex in each of the paths, such that there are no paths from $B_1$ to $B_{k+1}$ that avoid this set $S$. From Corollary \ref{cor-Menger} we have a partition of the vertices of $B^* \setminus S$ into two sets $X_{V(B^*)}$ and $Y_{V(B^*)}$ such that there are no directed edges from a vertex in $X_{V(B^*)}$ to a vertex in $Y_{V(B^*)}$.  As $B^*$ is planar this means we can draw a curve $\Omega$ between $X_{V(B^*)}$ and $Y_{V(B^*)}$ such that this curve crosses $B^*$ at precisely the set $S$ and such that there are no directed edges crossing $\Omega$ from $X_{V(B^*)}$ to $Y_{V(B^*)}$.  It follows that we can partition the Type A edges of $B^*$ into two sets $X_{E(B^*)}$ and $Y_{E(B^*)}$ either side of $\Omega$, and as these Type A edges correspond to the vertices of $G^*$ then we also have a partition of these vertices into two sets $X_{V(G^*)}$ and $Y_{V(G^*)}$.

%
%
%
%
%
\subsection{Almost periodic \texorpdfstring{$\{0,1,2,3\}$}{\{0,1,2,3\}} graph classes are minimal of unbounded clique-width}

We now come to the key result of Section \ref{Sect:Minimal}.

\begin{lemma}\label{lemma-minim}
	Let $\alpha$ be an infinite almost periodic word from the alphabet $\{0,1,2,3\}$ which has at least one non-zero letter, and $k$ a natural number at least $2$. Further, let $\beta=\alpha_j\alpha_j+1\cdots\alpha_{j+k-2}$ be a $k-1$ letter factor  of $\alpha$ such that $\beta$ appears in every factor of $\alpha$ of length $\mathcal{L}(\beta)$, so that $H^\alpha_{1,j}(k,k)$ is a graph in $\GGG^\alpha$ whose edges correspond to the subword $\beta$. Then any graph $G$ in $\GGG^\alpha$ that is $H^\alpha_{1,j}(k,k)$-free has clique-width bounded by a constant $c(k,\mathcal{L}(\beta))$ that depends only on $k$ and $\mathcal{L}(\beta)$.
\end{lemma}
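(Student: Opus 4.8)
The plan is to produce a partition of $V(G)$ to which Lemma~\ref{lemma-partitions} applies, with both the per-part clique-width bound and the two $\mu$-bounds controlled purely by $k$ and $\mathcal{L}(\beta)$. By Lemma~\ref{lemma-prime} it suffices to treat the case where $G$ is prime, hence connected; fix an embedding of $G$ into $\PPP^\alpha$, so that $G$ occupies a set of consecutive columns $C_a,\dots,C_{a+N-1}$, with $N$ arbitrarily large but all other data of $G$ (in particular the number of rows) unconstrained. The essential point is that the bound in Lemma~\ref{lemma-partitions} is $kl$, independent of the number of parts, so it is enough to cut $G$ into many blocks, each individually of bounded clique-width and each interfacing with the rest of $G$ through a bounded interface.

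First I would exploit almost periodicity to position the cuts. Since $\beta$ occurs in every factor of $\alpha$ of length $\mathcal{L}(\beta)$, I can select an increasing sequence of occurrences of $\beta$ among the columns spanned by $G$, ordered left to right with consecutive gaps at most $\mathcal{L}(\beta)$ (choosing them disjoint where possible). At the occurrence starting in column $C_{j_i}$, the $k$ columns $C_{j_i},\dots,C_{j_i+k-1}$ carry the forbidden factor, so the induced subgraph $G^*_i$ on these columns and its cluster graph $B^*_i$ are exactly as in Lemma~\ref{lemma-paths}. Because $G$ is $H^\alpha_{1,j}(k,k)$-free, Lemma~\ref{lemma-paths} gives at most $k-1$ disjoint directed paths across $B^*_i$, so by Menger's Theorem (Theorem~\ref{Menger}) there is a separator $S_i$ of at most $k-1$ clusters, and by Corollary~\ref{cor-Menger} a separating curve $\Omega_i$ crossing $B^*_i$ exactly at $S_i$. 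Taking the curves in left-to-right order, they partition $V(G)$ into blocks $U_1,U_2,\dots$, where $U_i$ consists of the vertices of $G$ lying between $\Omega_{i-1}$ and $\Omega_i$.

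The per-part clique-width bound (condition (a)) is then routine: each block $U_i$ lies in the columns strictly between two consecutive cut windows, hence spans at most $\mathcal{L}(\beta)+k$ consecutive columns of $\PPP^\alpha$ and so involves at most $\mathcal{L}(\beta)+k$ non-zero letters. Thus $G[U_i]$ is an induced subgraph of a strip $H^\alpha_{i',j'}(m,n')$ with $n'\le \mathcal{L}(\beta)+k$, and Corollary~\ref{cor-H-cwd} (together with the monotonicity of clique-width under induced subgraphs) bounds $\cwd(G[U_i])$ by a constant depending only on $k$ and $\mathcal{L}(\beta)$.

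The main obstacle is verifying condition (b) of Lemma~\ref{lemma-partitions}: that $\mu(U_i)$ and $\mu(U_1\cup\cdots\cup U_i)$ are bounded by a constant $l=l(k)$. Here the cluster structure does the work. The boundary between the prefix $P_i=U_1\cup\cdots\cup U_i$ and the rest of $G$ is precisely the single curve $\Omega_i$, which crosses the cluster graph in the at most $k-1$ clusters of $S_i$; since the vertices lying in one cluster form a (left or right) module and are therefore indistinguishable from the adjacent column, the external neighbourhood of any vertex of $P_i$ is determined by the cluster of the boundary link to which it is matched, and the monotone $\le/\ge$ structure of a $2$- or $3$-link forces only $O(k)$ distinct such neighbourhoods to occur, giving $\mu(P_i)\le l$. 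The same reasoning applied to the two curves $\Omega_{i-1}$ and $\Omega_i$ bounding $U_i$ gives $\mu(U_i)\le l$. The delicate points to nail down are that the separating curves can genuinely be realised as a nested family inducing a vertex partition of $G$ (using planarity of the cluster graph), that boundary and unmatched modules are correctly accounted for, and that ``one cluster $=$ one external neighbourhood'' survives at the at most $k-1$ separator clusters, so that the count of similarity classes really is $O(k)$ rather than growing with the number of rows. With (a) and (b) in hand, Lemma~\ref{lemma-partitions} yields $\cwd(G)\le c(k,\mathcal{L}(\beta))$, as required.
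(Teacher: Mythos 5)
Your proposal reproduces the paper's skeleton faithfully: reduction to prime $G$ via Lemma~\ref{lemma-prime}, bars of width governed by $\mathcal{L}(\beta)$ so that every bar contains a copy of $\beta$, the cluster graph of each window, Lemma~\ref{lemma-paths} together with Menger's Theorem (Theorem~\ref{Menger}) and Corollary~\ref{cor-Menger} to obtain separating curves, blocks $U_i$ between consecutive curves, Corollary~\ref{cor-H-cwd} for condition (a), and finally Lemma~\ref{lemma-partitions}. However, there is a genuine gap at exactly the step you yourself flag as ``delicate points to nail down'': the verification of condition (b). Your justification --- that ``the external neighbourhood of any vertex of $P_i$ is determined by the cluster of the boundary link to which it is matched, and the monotone $\le/\ge$ structure of a $2$- or $3$-link forces only $O(k)$ distinct such neighbourhoods to occur'' --- is not a proof, and as stated it cannot become one: the number of clusters in a link grows with the number of rows of the embedding, so ``determined by its cluster'' bounds nothing by itself. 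What must actually be shown (and what occupies the bulk of the paper's proof) is that all but $O(k)$ of these clusters give rise to identical external neighbourhoods, because adjacencies of $G$ crossing the curve $\Omega$ can only arise at the at most $k-1$ separator clusters; concretely, the paper partitions the Type~A edges meeting each column into $O(k)$ groups built from the paths of $P$ and the stripes they cut, and then runs a contradiction argument (a distinguishing vertex $y$ would force a non-separator cluster onto a path of $P$) to show no vertex across $\Omega$ distinguishes two vertices in the same group, yielding $\mu_{G^*}(X_{V(G^*)})\le k(4s+1)\le 4k^2-3k$. This argument needs two further ingredients your sketch omits entirely: Observation~\ref{obs5} (monotonicity of $\Omega$ within each stripe), which itself requires first eliminating boundary vertices by passing to a supergraph $G'$ that is $H^\alpha_{1,j}(k+2,k)$-free; and a separate treatment of the $\{0,1\}$ letters of $\beta$ (the paper's Case~1 and the mixed Case~3), where clusters are horizontal pairs rather than complete bipartite intervals, so the ``monotone $\le/\ge$'' reasoning you invoke simply does not apply.

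In short, your decomposition, use of Menger's theorem, and handling of condition (a) are exactly right, and your intuition about where the difficulty sits is accurate; but the heart of the lemma --- the proof that the number of similarity classes is a function of $k$ alone rather than growing with the number of rows --- is precisely what you defer, so the proposal as written does not constitute a proof.
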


\begin{proof}
Let $G$ be a graph in $\GGG^\alpha$ that is $H^\alpha_{1,j}(k,k)$-free. In the following we refer to vertex grid coordinates $(x,y)$ of an embedding of $G$ in $\PPP^\alpha$ as described in Section~\ref{graphdef}. As before, we assume $G$ is prime (from Lemma~\ref{lemma-prime}) and therefore connected.

We define a partition $\{V_1, \dots, V_n\}$ of the vertices of $G$ as follows. Let $a$ be the first column of $\PPP^\alpha$ in which a vertex of $G$ is embedded. Denoting the set of vertices of $G$ in a set of consecutive columns as a \emph{bar}, let $V_i$ be the bar of $G$ in columns $[a+(i-1)(\mathcal{L}(\beta)+1)]$ through to $[(a-1)+i(\mathcal{L}(\beta)+1)]$.

The corresponding subword for the graph induced by bar $V_i$ is of length $\mathcal{L}(\beta)$ so must contain a copy of $\beta$ by definition. Let this copy of $\beta$ correspond to columns $C_y$, \dots, $C_{y+k-1}$ of $\PPP^\alpha$. Following the same notation as Section \ref{cluster} we define $G^*$ as the subgraph of $G$ induced by the columns $C_y$, \dots, $C_{y+k-1}$ and $B^*$ its respective cluster graph, with columns $B_1, \dots, B_{k+1}$. We define $P$, $S$, $s$, $\Omega$, $X_{E(B^*)}$, $Y_{E(B^*)}$, $X_{V(G^*)}$ and $Y_{V(G^*)}$ as in Section \ref{mengthm}.

We now show that the partition $X_{V(G^*)}/Y_{V(G^*)}$ of the vertices of $G^*$ defined in Section \ref{mengthm} gives us a number of equivalence classes, $\mu_{G^*}(X_{V(G^*)})$ and $\mu_{G^*}(Y_{V(G^*)})$, bounded by a function of $k$. We consider this in $3$ cases depending on the alphabet of $\beta$:

\begin{description}
\item [Case 1] $\beta$ is a subword from the alphabet $\{0,1\}$.

A $\{0,1\}$ cluster graph $B(G)$ contains only edges of type A and only horizontal paths. Every cluster is either a horizontal pair of vertices or a boundary vertex. Each row of $B(G)$ is either a (left to right) directed path or a disjoint union of directed paths. If a row is a disjoint union of paths then the gaps between the paths have either no vertex or a boundary vertex immediately on either side. 

It is easy to see that the curve $\Omega$ must traverse each stripe of $B^*$ by passing through a gap in each row between the paths at the top and bottom of the stripe.

From Section \ref{mengthm} the $X_{E(B^*)}/Y_{E(B^*)}$ partition of the Type A edges of $B^*$ defined by $\Omega$ gives a corresponding $X_{V(G^*)}/Y_{V(G^*)}$ partition of the vertices of $G^*$. We can partition the edges of $B^*$ that correspond to vertices of a column $C_j$ of $G^*$, into at most $2s+1$ subsets $C_{1,j}, \dots, C_{2s+1,j}$, as follows:
 
\begin{enumerate}[label=(\roman*)]
	\item The edges forming the paths of $P$ ($s$ edges/subsets).
	\item The remaining edges in each stripe (at most $s+1$ subsets). 
\end{enumerate}  
 
We claim that no vertex of $Y_{V(G^*)}$ can distinguish the vertices of $C_{i,j} \cap X_{V(G^*)}$. Suppose to the contrary, that a vertex $y \in Y_{V(G^*)}$ is not adjacent to $x_1 \in C_{i,j} \cap X_{V(G^*)}$ but is adjacent to $x_2 \in C_{i,j} \cap X_{V(G^*)}$. Then $x_1$ and $x_2$ cannot be in the same cluster in $G_j$, because they are on different rows, but one of them must be in the same cluster as $y$. But this cluster is then not a boundary cluster as it contains two vertices and hence the cluster must be on a path of $P$. But $x_1$ and $x_2$ are in different clusters in $G_j$ so cannot both be on the same path of $P$ and so are in different subsets, $C_{i,j} \cap X_{V(G^*)}$, a contradiction.
 
So the maximum number of equivalence classes in $C_j \cap X_{V(G^*)}$ is $2s+1 \le 2k-1$ and hence $\mu_{G^*}(X_{V(G^*)})$ is at most the number of different $C_{i,j}$'s, which is at most $k(2s+1) \le 2k^2-k$. Symmetrically, $\mu_{G^*}(Y_{V(G^*)}) \le 2k^2-k$.

\item [Case 2] $\beta$ is a subword from the alphabet $\{2,3\}$.

Without loss of generality, we may assume that no $\alpha$-link $G_j$ , where $\alpha_j \in \{2,3\}$, contains a boundary vertex. For if such vertices exist, they will be positioned at one extreme (top or bottom) of a column. It is then possible to add an additional vertex in the opposite column to turn them into a  cluster.  Therefore, by adding at most two vertices to each column of $G$, we can extend it to a graph $G^\prime$ which has no boundary vertices, contains $G$ as an induced subgraph and is $H^\alpha_{1,j}(k+2,k)$-free.

\begin{obs}\label{obs5}
The curve $\Omega$ traverses each stripe of $B^*$ monotonically in a horizontal direction, meaning that its x-coordinate changes within a stripe either non-increasingly or non-decreasingly. 
\end{obs}
\begin{proof}[Proof of Observation]
Suppose for a contradiction that $\Omega$ had an unavoidable local maximum within a stripe, we would have a vertex $v$ (to the left of the curve) that causes this maximum $x$-coordinate. Obviously, $v$ does not belong to $B_{k+1}$ (since otherwise $B_{k+1}$ is not separated from $B_1$), and $v$ must have a neighbour to its right within the stripe (since there are no boundary vertices in $G$). But then the Type A edge connecting $v$ to that neighbour would cross $\Omega$, which contradicts Corollary \ref{cor-Menger}. 
\end{proof}
This observation allows us to conclude that whenever $\Omega$ separates the Type A edges between two columns of $B^*$ within a stripe, the result is two intervals, one above $\Omega$ and one below it.

From Section \ref{mengthm} the $X_{E(B^*)}/Y_{E(B^*)}$ partition of the Type A edges of $B^*$ defined by $\Omega$ gives a corresponding $X_{V(G^*)}/Y_{V(G^*)}$ partition of the vertices of $G^*$. We can partition the Type A edges of $B^*$ that correspond to vertices of a column $C_j$ of $G^*$, into at most $4s+1 \le 4k-3$ subsets $C_{1,j}, \dots, C_{4s+1,j}$, as follow:

\begin{enumerate}[label=(\roman*)]
	\item The Type A edges intersecting the paths of $P$ ($s$ edges/subsets).
	\item For each such edge $e$, the Type A edges that have a common vertex with $e$, at most 2 subsets in each stripe (up to $2s$ subsets). 
	\item The remaining Type A edges in each stripe (at most $s+1$ subsets). 
\end{enumerate} 
 
From Observation \ref{obs5} the vertices of each $C_{i,j}$ form an interval, i.e., they are consecutive in $C_j$. We claim that no vertex of $Y_{V(G^*)}$ can distinguish the vertices of $C_{i,j} \cap X_{V(G^*)}$. Suppose to the contrary, that a vertex $y \in Y_{V(G^*)}$ is not adjacent to $x_1 \in C_{i,j} \cap X_{V(G^*)}$ but is adjacent to $x_2 \in C_{i,j} \cap X_{V(G^*)}$. Without loss of generality we assume that $\alpha_j=2$, as the case $\alpha_j=3$ follows by symmetry. 
 
The vertices $x_1$ and $x_2$ cannot be in the same cluster in $G_{j}$ because they are distinguished by $y$. Furthermore, as $\alpha_j=2$ then $y$ must be in column $C_{j+1}$ on a row above that of $x_1$ but below or level with the row of $x_2$.  We can assume that $y$ is in the same $G_j$ cluster as $x_2$ because if it is not, then it must be in a cluster positioned between the cluster containing $x_2$ and the cluster containing $x_1$. As $C_{i,j}$ is an interval, this cluster must include some other vertex $x_3 \in C_{i,j} \cap X_{V(G^*)}$, and we can proceed using $x_3$ instead of $x_2$. Let the cluster of $G_j$ including $y$ and $x_2$ be denoted $K_{r,j}$.
 
Let $u_{r,j}$ denote the vertex of $B^*$ corresponding to $K_{r,j}$. Also, let $e_{x_{1}}, e_{x_{2}}, e_y$ be the edges of $B^*$ corresponding to vertices $x_1$, $x_2$, and $y$ respectively. Since $e_{x_{2}}$ and $e_y$ are incident to $u_{r,j}$ but separated by $\Omega$, vertex $u_{r,j}$ lies on $\Omega$ and hence belongs to the separator $S$. Therefore, $u_{r,j}$ belongs to a path from $P$. But then $C_{i,j}$ is of the second type and therefore $e_{x_{1}}$ must also be incident to $u_{r,j}$. This contradicts the fact that $x_1$ does not belong to $K_{r,j}$. This contradiction shows that any two vertices of $C_{i,j} \cap X_{V(G^*)}$ have the same neighbourhood in $Y$.

So $\mu_{G^*}(X_{V(G^*)})$ is at most the number of different $C_{i,j}$s, which is at most $k(4s+1) \le 4k^2-3k$. Symmetrically, $\mu_{G^*}(Y_{V(G^*)}) \le 4k^2-3k$. 

\item [Case 3] $\beta$ is a subword from the alphabet $\{0,1,2,3\}$.

Each column $B_i$ of cluster graph $B^*$ is associated with a letter of $\beta$.   We can divide $B^*$ into alternating $\{0,1\}$ bars and $\{2,3\}$ bars (reminder, a bar is a set of consecutive columns). Suppose we label these bars $D_1, D_2, \dots, D_m$ of lengths $k_1, k_2, \dots, k_m$  so that $k_1+k_2+\cdots+k_m=k-1$. Without loss of generality, we will say that if $i$ is odd, $D_i$ is a $\{0,1\}$ bar and if $i$ is even $D_i$ is a $\{2,3\}$ bar.

Define the partition curve $\Omega$ as before. If $\Omega$ stays in only one $\{0,1\}$ or $\{2,3\}$ bar then we can revert to Case 1 or Case 2. If $\Omega$ straddles several $\{0,1\}$ and $\{2,3\}$ bars we can argue as follows.

It can be observed that, within each stripe, $\Omega$ can only pass at most once through each bar in $B^*$. For if it passed twice through a column in a $\{2,3\}$ bar, in a given stripe, with at least one vertex between the two sections of $\Omega$ then there must be a Type B edge passing from $X_{V(B^*)}$ to $Y_{V(B^*)}$ which contradicts Corollary \ref{cor-Menger} of Menger's Theorem.

Also, for the same reasons given in Case 2, within each stripe, the line $\Omega$ must pass across $\{2,3\}$ bars monotonically in a left/right x-coordinate sense.

Using the same arguments as used in the $\{0,1\}$ and $\{2,3\}$ proofs we can partition the vertices of $G^*$ into sets $X_{V(G^*)}$ and $Y_{V(G^*)}$ such that the maximum number of different equivalence classes for a column $X_{V(G^*)} \cap C_j$  is $(2k-1)\le (4k-3)$ for a $\{0,1\}$ column and $(4k-3)$ for a $\{2,3\}$ column. So for $k \ge 2$ we have $\mu_{G^*}(X_{V(G^*)}) \le 4k^2-3k$. Symmetrically, $\mu_{G^*}(Y_{V(G^*)}) \le 4k^2-3k$. 

\end{description}

Using this $X_{V(G^*)}/Y_{V(G^*)}$ partition of the vertices of $G^*$ we can create a partition of $V_i$. All vertices in $V_i$ in columns to the left of $G^*$ are added to the vertices of $X_{V(G^*)}$ and all the vertices in $V_i$ in columns to the right of $G^*$  are added to the vertices of $Y_{V(G^*)}$ to produce a partition of bar $V_i$ into two parts $X_i$ and $Y_i$. Let $U_i = Y_{i-1} \cup X_i$. Each $G[U_i]$ has at most $2(\mathcal{L}(\beta)+1)$ columns. The subsets $U_1$, \dots $U_n$ form a partition of the vertices of $G$, such that for every $i$:
\begin{enumerate}[label=(\alph*)]
\item using Corollary \ref{cor-H-cwd} the clique-width of $G[U_i]$ is at most $6(2\mathcal{L}(\beta)+2)+3 = 12\mathcal{L}(\beta)+15$, and 
\item $\mu(U_i) \le 2(4k^2-3k)$ and $\mu(U_1 \cup \cdots \cup U_i) \le 2(4k^2-3k)$.
\end{enumerate}
Thus from Lemma~\ref{lemma-partitions} the clique-width of $G$ is at most $(12\mathcal{L}(\beta)+15)(8k^2-6k)$.

Hence the clique-width of $G$ is bounded by a constant that depends only on $k$ and $\mathcal{L}(\beta)$.
\end{proof}

\begin{thm}\label{thm-0123minim}
Let $\alpha$ be an infinite almost periodic word over the alphabet $\{0,1,2,3\}$ containing at least one non-zero letter. Then the class $\GGG^\alpha$ is a minimal hereditary class of graphs of unbounded clique-width.
\end{thm}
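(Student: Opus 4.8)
The plan is to verify the two defining properties of a minimal hereditary class of unbounded clique-width: first, that $\GGG^\alpha$ itself has unbounded clique-width, and second, that every proper hereditary subclass of $\GGG^\alpha$ has bounded clique-width. The first property is immediate from the earlier work: since $\alpha$ is almost periodic and contains a non-zero letter $\ell$, the single-letter factor $\ell$ recurs with bounded gaps, so $\alpha$ contains infinitely many non-zero letters; Theorem~\ref{thm-0123unbound} then gives that $\GGG^\alpha$ has unbounded clique-width.

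For the second property, I would take an arbitrary proper hereditary subclass $\CCC \subsetneq \GGG^\alpha$ and fix a (non-trivial) graph $F \in \GGG^\alpha \setminus \CCC$. Because $\CCC$ is hereditary it is contained in $\Free(F)$. Since $F$ is a finite induced subgraph of $\PPP^\alpha$, it embeds into some finite square block $H^\alpha_{i,j}(k,k)$ with $k \ge 2$, and, using the fact that the edge rules between columns are invariant under vertical translation of the grid, this block is isomorphic to $H^\alpha_{1,j}(k,k)$. Now if a graph $G$ is $F$-free then it is $H^\alpha_{1,j}(k,k)$-free, because any induced copy of $H^\alpha_{1,j}(k,k)$ would contain an induced copy of $F$; hence $\CCC \subseteq \Free(F) \subseteq \Free(H^\alpha_{1,j}(k,k))$.

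The final step is to invoke Lemma~\ref{lemma-minim}. The factor $\beta = \alpha_j \alpha_{j+1} \cdots \alpha_{j+k-2}$ defining $H^\alpha_{1,j}(k,k)$ is a factor of the almost periodic word $\alpha$, so there is a constant $\mathcal{L}(\beta)$ such that every factor of $\alpha$ of length $\mathcal{L}(\beta)$ contains $\beta$; this is exactly the hypothesis required by the lemma. Consequently every graph in $\CCC$, being $H^\alpha_{1,j}(k,k)$-free, has clique-width at most the constant $c(k,\mathcal{L}(\beta))$, so $\CCC$ has bounded clique-width. Combining the two properties shows that $\GGG^\alpha$ is minimal of unbounded clique-width.

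Since Lemma~\ref{lemma-minim} already carries essentially all of the technical weight, the points requiring care here are purely structural: confirming that almost periodicity forces infinitely many non-zero letters (so that unboundedness applies), getting the implication in the correct direction (that forbidding $F$ forbids the larger grid $H^\alpha_{1,j}(k,k)$, not the reverse), and checking the vertical-translation invariance that lets us normalise the block to start in row $1$, so that the recurrence constant $\mathcal{L}(\beta)$ supplied by almost periodicity feeds directly into the lemma. None of these is deep, and I do not expect any genuine obstacle to remain once Lemma~\ref{lemma-minim} is in hand.
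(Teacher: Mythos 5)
Your proposal is correct and takes essentially the same route as the paper: identify a forbidden graph $F$ for the proper subclass $\CCC$, embed it in a block $H^\alpha_{1,j}(k,k)$ so that $\CCC \subseteq \Free(H^\alpha_{1,j}(k,k))$, and then invoke Lemma~\ref{lemma-minim} with the constant $\mathcal{L}(\beta)$ supplied by almost periodicity. The two points you spell out explicitly --- that almost periodicity forces infinitely many non-zero letters (so Theorem~\ref{thm-0123unbound} gives unboundedness) and that vertical translation invariance lets one normalise the block to start in row $1$ --- are left implicit in the paper's proof, but the argument is the same.
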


\begin{proof}
If $\CCC$ is a proper hereditary subclass of $\GGG^\alpha$ then there must exist a non-trivial finite forbidden graph $F$ that is in $\GGG^\alpha$ but not in $\CCC$. But $F$ must be an induced subgraph of some $H^\alpha_{1,j}(k,k)$ so $\CCC\subseteq \Free(H^\alpha_{1,j}(k,k)$ and Lemma~\ref{lemma-minim} gives us a bound on the clique-width. Hence, $\GGG^\alpha$ is a minimal hereditary class of graphs of unbounded clique-width.  
\end{proof}

\subsection{Uncountably many minimal graph classes with unbounded clique-width}

We now proceed to show that there is an uncountably infinite number of such graph classes. To do this we will use the class of almost periodic sequences known as \emph{Sturmian}. One definition of a Sturmian sequence is a binary sequence that has \emph{complexity} $p_\alpha(n)=n+1$, where the complexity function $p_\alpha(n)$ is the number of different factors of length $n$ in $\alpha$ \cite{fogg:substitutions:}. 

An alternative characterisation of Sturmian sequences is as rotation sequences defined by an irrational number, and hence it follows that the number of such sequences is uncountably infinite. We say that two sequences are \emph{locally isomorphic} if they have the same factors. If two Sturmian sequences are locally isomorphic this means they have the same $n+1$ factors of length $n$ out of a possible $2^n$ such factors \cite{wen:local-isom:}. Hence the set of Sturmian sequences with a particular set of factors is countable in number and so it follows there is an uncountable number of such sets with different factors.

We denote \emph{rev($\beta$)} as the sequence $\beta$ in reverse order (mirror image).

\begin{lemma}\label{lem-embed}
Let $\alpha$ be an infinite binary word and $\beta$ a finite binary word of length $k-1$ ($k \ge 2$) with at least one $1$. Further, let $F_\beta$ be the graph $H^\beta_{1,1}(3,k)$.

Then $F_\beta$ can be embedded in $\PPP^\alpha$ if and only if $\beta$ or rev($\beta$) is a factor of $\alpha$.

Furthermore, if $k \ge 3$, such embedding is only possible in $3$ rows and $k$ consecutive columns $s,\dots, s+k-1$ when $\beta$ (or rev($\beta$)) $=\alpha_s \cdots \alpha_{s+k-1}$.
\end{lemma}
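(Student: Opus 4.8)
The plan is to prove the two directions separately, treating the ``furthermore'' clause as a strengthening of the reverse implication.

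\textbf{The ``if'' direction.} Suppose first that $\beta=\alpha_s\alpha_{s+1}\cdots\alpha_{s+k-2}$ is a factor of $\alpha$. Then, directly from the edge rules (i)--(ii) defining $\PPP^\alpha$, the induced subgraph $H^\alpha_{1,s}(3,k)$ on the vertices $\{v_{x,y}: x\in\{1,2,3\},\ s\le y\le s+k-1\}$ has exactly the adjacencies prescribed by $\beta$, so it is isomorphic to $F_\beta$; this is the required embedding. To handle $\mathrm{rev}(\beta)$, I would observe that both a matching (letter $0$) and the complement of a matching (letter $1$) are \emph{symmetric} relations between two columns, so reversing the left-to-right order of the $k$ columns of $H^\beta_{1,1}(3,k)$ yields a graph isomorphic to $H^\beta_{1,1}(3,k)$ but whose defining word is $\mathrm{rev}(\beta)$. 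Hence $F_\beta\cong F_{\mathrm{rev}(\beta)}$, and if $\mathrm{rev}(\beta)$ is a factor of $\alpha$ the previous sentence supplies an embedding of $F_{\mathrm{rev}(\beta)}\cong F_\beta$.

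\textbf{The ``only if'' direction.} Now suppose $\phi$ embeds $F_\beta$ into $\PPP^\alpha$. Since $\beta$ contains a $1$, every column of $F_\beta$ uses all three rows and the anti-matching cross-links them, so $F_\beta$ is connected; because all edges of $\PPP^\alpha$ join consecutive columns, the image $\phi(V(F_\beta))$ must occupy a block of consecutive columns $C_p,\dots,C_q$ of $\PPP^\alpha$ (a gap would disconnect the image). The heart of the argument is then to show that, when $k\ge 3$, this block consists of exactly $k$ columns, each meeting the image in three vertices lying in a common triple of rows, and that the link types $\alpha_p,\dots,\alpha_{q-1}$ read off from these columns spell $\beta$ or $\mathrm{rev}(\beta)$ (the two possibilities arising from the two left-to-right orientations of the embedding, exactly as in the reversal symmetry above). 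Granting this alignment, the word read off is forced, giving the stated equality $\beta$ (or $\mathrm{rev}(\beta)$) $=\alpha_p\cdots\alpha_{q-1}$.

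\textbf{The main obstacle: rigidity.} The crux --- and the reason the ``furthermore'' requires $k\ge 3$ --- is establishing that the image is column- and row-aligned rather than ``sheared'' across the grid. This genuinely fails for $k=2$: the $6$-cycle $F_1$ can be realised across \emph{three} columns $C_p,C_{p+1},C_{p+2}$ with $\alpha_p=\alpha_{p+1}=1$, placing rows $\{a,b\}$ in $C_p$, rows $\{a,b,c\}$ in $C_{p+1}$, and row $\{c\}$ in $C_{p+2}$ (one checks this induced subgraph is a single $C_6$), so no rigidity can hold for a single link. The plan to recover rigidity for $k\ge 3$ is to exploit an \emph{interior} column of $F_\beta$, whose three vertices each have neighbours on both sides: comparing the common neighbourhoods of two same-column vertices (which share exactly one neighbour across a $1$-link and no neighbour across a $0$-link, since only three rows are available) detects which image vertices occupy a common column of $\PPP^\alpha$. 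Anchoring at a $1$-link and propagating this constraint outwards along the connected structure should force three vertices into each of $k$ consecutive columns on a fixed row-triple, after which each link's matching/anti-matching type is determined and the word is read off. I expect the bulk of the work, and the only delicate part, to be this case analysis ruling out the sheared and compressed configurations; the ``if'' direction and the bookkeeping of reading off the word are routine by comparison.
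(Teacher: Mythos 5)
Your ``if'' direction is complete and correct, and you have correctly located the crux: the whole lemma reduces to the rigidity claim that for $k\ge 3$ any embedding of $F_\beta$ must occupy exactly $k$ consecutive columns with three vertices per column on a common triple of rows. But your proposal never proves this claim. It is introduced as ``the heart of the argument'', then used under the proviso ``granting this alignment'', and finally deferred with ``should force'' and ``I expect the bulk of the work \dots to be this case analysis''. Nothing in the proposal rules out images in which some column of $\PPP^\alpha$ meets the image in one, two, or four vertices --- the sheared and compressed configurations you yourself name. Your neighbourhood-comparison device (two vertices of a common column of $F_\beta$ share exactly one neighbour across a $1$-link and none across a $0$-link) only constrains vertices already known to be column-aligned; turning it into a statement that alignment propagates across the whole image is precisely the missing case analysis, and it is not obvious it closes. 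Indeed your own $k=2$ example (which is correct: a $C_6$ induced over three columns with distribution $2+3+1$ across two consecutive $1$-links, and incidentally a configuration beyond the two methods the paper's proof enumerates) shows that non-aligned induced copies arise in ways that are easy to overlook, so the argument must genuinely exploit the extra column available when $k\ge 3$ rather than merely assert that it helps.

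For contrast, the paper closes this gap by induction on $k$: for $k=2$ it enumerates the ways $C_6$ can embed; for $k=3$ it argues that the three additional vertices of $F_\beta$ are incompatible with any non-aligned embedding of the $C_6$ inside it; and in the inductive step it deletes the last column of $F_\beta$, applies the hypothesis to $F_{\beta^-}$ (where $\beta^-$ is $\beta$ with its last letter removed) to get a rigid aligned embedding of the first $k-1$ columns, and then checks locally that the one remaining column can only attach in the aligned way and only when the next letter of $\alpha$ matches the last letter of $\beta$. This inductive organisation is what makes the dreaded case analysis finite and local (one column at a time); your direct propagation plan would need an equivalent mechanism spelled out before it constitutes a proof.
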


\begin{proof}
Clearly, by its definition, $F_\beta$ can be embedded in $\PPP^\alpha$ in the way described if $\beta$ (or rev($\beta$)) is a factor of $\alpha$. [To avoid much repetition in what follows we will just refer to $\beta$ to mean $\beta$ or rev($\beta$).] We prove that $F_\beta$ can only be embedded in $\PPP^\alpha$ in the way described, and only if $\beta$ is a factor of $\alpha$, by induction on $k$. 

Firstly, if $k=2$ then $\beta=1$ and $F_\beta=C_6$, the cycle on $6$ vertices. It is trivial to see that this can be embedded in $\PPP^\alpha$ only if there is at least one $1$ in $\alpha$. In fact, $C_6$ can be embedded in two ways. Firstly, (Method $1$) in the way described in the Lemma, with $6$ vertices from $3$ rows and $2$ consecutive columns, or secondly, (Method $2$) over $4$ consecutive columns with $1$ vertex from the first and last column and $2$ from each of the middle columns.

If $k=3$ then $\beta$ must be $10$, $01$ or $11$. $F_\beta$ still includes an induced subgraph $C_6$, but now the addition of 3 more vertices and corresponding edges means we can no longer use Method $2$. Hence, $F_\beta$ can only be embedded in $\PPP^\alpha$ in the way described in the Lemma (Method $1$) .

Next using the strong induction hypothesis, we assume that the Lemma is true for all words of length less than $k-1$. Thus if $\beta$ contains a factor that is not a factor of $\alpha$ then $F_\beta$ cannot embed in $\PPP^\alpha$. 

If $F_\beta$ does embed in $\PPP^\alpha$ then, if $\beta^-$ is the word $\beta$ without its last letter, we must have $\beta^-$ a factor of $\alpha$ where $F_{\beta^-}$ can only embed in $\PPP^\alpha$ by Method 1. Now it is straightforward to see that this cannot be extended to $F_\beta$ if the next letter is not the same as the last letter of $\beta$,  and that if it is the same, it can only be done by Method 1.
\end{proof}

\begin{thm} \label{cor-uncountable}
There exists an uncountably infinite number of minimal hereditary classes of graphs of unbounded clique-width.
\end{thm}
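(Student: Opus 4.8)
The plan is to realise uncountably many of the required classes as the family $\{\GGG^\alpha : \alpha \text{ Sturmian}\}$, using Theorem~\ref{thm-0123minim} to supply minimality and Lemma~\ref{lem-embed} to separate distinct classes. So the proof splits into two tasks: (i) each $\GGG^\alpha$ with $\alpha$ Sturmian is minimal of unbounded clique-width, and (ii) uncountably many of these classes are pairwise distinct.

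For (i), I would first record the relevant properties of a Sturmian sequence $\alpha$: it is an infinite binary word, it is almost periodic, and it contains at least one $1$. The last point is immediate from the complexity condition $p_\alpha(1)=2$, which forces both letters to occur. Thus $\alpha$ is an infinite almost periodic word over $\{0,1,2,3\}$ with a non-zero letter, and Theorem~\ref{thm-0123minim} applies verbatim to give that $\GGG^\alpha$ is a minimal hereditary class of graphs of unbounded clique-width.

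For (ii), the idea is to extract from the \emph{class} an invariant that recovers the \emph{factor language} of $\alpha$. For a finite binary word $\beta$ containing at least one $1$, Lemma~\ref{lem-embed} states that $F_\beta=H^\beta_{1,1}(3,k)$ embeds in $\PPP^\alpha$ — equivalently, $F_\beta\in\GGG^\alpha$ — if and only if $\beta$ or $\mathrm{rev}(\beta)$ is a factor of $\alpha$. Hence the collection of factors of $\alpha$ (containing a $1$), read up to reversal, is determined by $\GGG^\alpha$ alone. Using the standard fact that the set of factors of a Sturmian word is closed under reversal, this invariant coincides with the factor language of $\alpha$ itself. Consequently, if two Sturmian sequences $\alpha,\alpha'$ have different factor languages, then (since for each length at most one factor is a power of $0$, the languages already differ on some factor containing a $1$) there is a word $\beta$ with $F_\beta$ in exactly one of $\GGG^\alpha,\GGG^{\alpha'}$, so $\GGG^\alpha\neq\GGG^{\alpha'}$. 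It then suffices to invoke the counting set up in the text: there are uncountably many Sturmian sequences, while each local-isomorphism class (the Sturmian sequences sharing a fixed factor language) is countable; as a countable union of countable sets is countable, there must be uncountably many distinct factor languages, and therefore uncountably many distinct classes $\GGG^\alpha$.

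The step I expect to demand the most care is this last distinctness argument, specifically the passage from ``$\alpha,\alpha'$ have different factors'' to ``$\GGG^\alpha\neq\GGG^{\alpha'}$'' via Lemma~\ref{lem-embed}. The subtle point is the reversal ambiguity in that lemma (``$\beta$ or $\mathrm{rev}(\beta)$''): a priori it only recovers the factor language modulo reversal, which could in principle collapse distinct languages. This is precisely why mirror-closedness of Sturmian languages is essential, and I would make sure to state and use it explicitly (or argue it directly) rather than leave it implicit, so that the recovered invariant is genuinely the full factor language and the uncountable lower bound on factor languages transfers cleanly to an uncountable lower bound on minimal classes.
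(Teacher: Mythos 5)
Your proposal is correct and follows essentially the same route as the paper's own proof: realise the classes via Sturmian sequences, use Theorem~\ref{thm-0123minim} for minimality, and use Lemma~\ref{lem-embed} together with the countability of each local-isomorphism class to get uncountably many pairwise distinct classes $\GGG^\alpha$. If anything, you are more careful than the paper, which applies Lemma~\ref{lem-embed} to a factor unique to one word without addressing the ``$\beta$ or $\mathrm{rev}(\beta)$'' ambiguity; your explicit appeal to the reversal-closedness of Sturmian factor languages (and to the fact that differing languages must differ on a factor containing a $1$) closes that small gap.
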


\begin{proof}
There exists an uncountably infinite number of Sturmian binary sequences that are not locally isomorphic. Suppose we have Sturmian words $\alpha_1$ and $\alpha_2$ that have unique factors $\beta_1$ and $\beta_2$ respectively. Then using Lemma \ref{lem-embed}, the class $\GGG^{\alpha_1}$ does not contain the graph $F_{\beta_2}$ and the class $\GGG^{\alpha_2}$ does not contain the graph $F_{\beta_1}$. So $\GGG^{\alpha_1}$ and $\GGG^{\alpha_2}$ are different graph classes. It follows from Theorem \ref{thm-0123minim} each one defines a different minimal hereditary class of graphs of unbounded clique-width.
\end{proof}

%
%
%
%
%
%
\section{Recurrent but not almost periodic words} \label{recur}

We have seen that (with the exception of the all-zeros word) every almost periodic word $\alpha$ over $\{0,1,2,3\}$ defines a minimal hereditary class $\GGG^\alpha$ of unbounded clique width. At the other extreme, if $\alpha$ is a word over $\{0,1,2,3\}$ that contains a factor $\beta=\alpha_j \alpha_{j+1} \cdots \alpha_{j+k-2}$ that either does not repeat, or repeats only a finite number of times, then $\GGG^\alpha$ cannot be a minimal class of unbounded clique-width, as forbidding the induced subgraph $H^\alpha_{1,j}(k,k)$ would leave a proper subclass that by Theorem~\ref{thm-0123unbound} still has unbounded clique-width. 

Thus, to complete the delineation between minimality and non-minimality (with respect to having unbounded clique-width) of the classes $\GGG^\alpha$, it remains to consider words $\alpha$ that are recurrent but not almost periodic, i.e.\ words in which each factor occurs infinitely many times, but where the gap between consecutive occurrences of a factor may be arbitrarily large.


Fix a recurrent but not almost periodic word $\alpha$ over $\{0,1,2,3\}$. Since $\alpha$ is recurrent, any factor $\beta$ of $\alpha$ must occur an infinite number of times, and we will call the factors between any consecutive pair of occurrences of $\beta$ the \emph{$\beta$-gap factors}. Since $\alpha$ is not almost periodic, there exists a factor $\beta$ of length $k-1$, say, such that the $\beta$-gap factors can be arbitrarily long. Denote the sequence of $\beta$-gap factors by $\gamma_1,\gamma_2,\dots$. If, amongst these gap factors, we find that for any integer $m$ there exists some (indeed, infinitely many) $\gamma_i$ which has at least $m$ letters that are not 0, then by the analysis in Section~\ref{Sect:Unbounded}
there exist graphs whose clique-width grows as a function of $m$. Thus, the proper subclass $\CCC=\Free(H^\alpha_{1,j}(k,k))\cap \GGG^\alpha$ (where $j$ denotes the start of the first occurrence of $\beta$ in $\alpha$) contains graphs of arbitrarily large clique-width, and thus $\GGG^\alpha$ is not minimal.

Now let $\Gamma$ denote the collection of all recurrent words $\alpha$ over $\{0,1,2,3\}$ other than the all-zeros word, with the property that for any factor $\beta$ of $\alpha$, the weight of every $\beta$-gap factor is bounded. We now show that it is precisely the words in $\Gamma$ that define minimal classes of unbounded clique-width.

\begin{thm}\label{theorem-recur}
Let $\gamma$ be an infinite sequence over $\{0,1,2,3\}$ other than the all-zero sequence. Then $\GGG^\gamma$ is a minimal hereditary graph class of unbounded clique-width if and only if $\gamma\in\Gamma$.
\end{thm}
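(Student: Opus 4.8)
The plan is to prove the two directions separately, with essentially all of the new work going into the ``if'' direction; the ``only if'' direction is the contrapositive and is already covered by the discussion preceding the theorem. Suppose $\gamma$ is not the all-zero word and $\gamma\notin\Gamma$; I would split into three sub-cases. If $\gamma$ has only finitely many non-zero letters, then any factor containing the last non-zero letter occurs only finitely often, so $\gamma$ is not recurrent (correctly excluded from $\Gamma$), and by Theorem~\ref{thm-main}(a) the class $\GGG^\gamma$ has \emph{bounded} clique-width, hence is not a class of unbounded clique-width at all. If $\gamma$ has infinitely many non-zero letters but is not recurrent, some factor $\beta=\gamma_j\cdots\gamma_{j+k-2}$ occurs only finitely often, and as in the paragraph before the theorem, $\Free(H^\gamma_{1,j}(k,k))\cap\GGG^\gamma$ is a proper subclass still containing graphs realising a tail of $\gamma$ with infinitely many non-zero letters, so by Theorem~\ref{thm-0123unbound} it has unbounded clique-width. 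Finally, if $\gamma$ is recurrent but some factor $\beta$ has $\beta$-gap factors of unbounded weight, the same proper subclass has unbounded clique-width because arbitrarily heavy gaps contain arbitrarily many non-zero letters, to which Section~\ref{Sect:Unbounded} applies. In every case $\GGG^\gamma$ fails to be minimal of unbounded clique-width.

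For the ``if'' direction, suppose $\gamma\in\Gamma$. Since $\gamma$ is recurrent with at least one non-zero letter, that letter recurs infinitely often and $\gamma$ has infinitely many non-zero letters, so $\GGG^\gamma$ has unbounded clique-width by Theorem~\ref{thm-0123unbound}. It remains to bound the clique-width of every proper hereditary subclass $\CCC$. Exactly as in Theorem~\ref{thm-0123minim}, a forbidden graph $F\in\GGG^\gamma\setminus\CCC$ embeds in some $H^\gamma_{1,j}(k,k)$, whose defining factor $\beta=\gamma_j\cdots\gamma_{j+k-2}$ is a genuine factor of $\gamma$, so $\CCC\subseteq\Free(H^\gamma_{1,j}(k,k))$. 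The heart of the matter is therefore a $\Gamma$-analogue of Lemma~\ref{lemma-minim}: every $H^\gamma_{1,j}(k,k)$-free graph in $\GGG^\gamma$ has clique-width bounded by a constant depending only on $k$ and the weight bound $W$ of the $\beta$-gap factors.

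To prove this analogue I would reuse the machinery of Lemma~\ref{lemma-minim} almost verbatim, changing only the choice of bars. The property replacing almost periodicity is that, since $\gamma\in\Gamma$, any factor of $\gamma$ containing no occurrence of $\beta$ has weight at most $W+(k-1)$; equivalently, consecutive occurrences of $\beta$ are separated by bounded weight. Reducing to the prime, hence connected, case via Lemma~\ref{lemma-prime}, embed $G$ in columns $C_a,\dots,C_b$ and select a sequence of pairwise disjoint occurrences of $\beta$ as cut points so that the weight between consecutive cut points, and before the first and after the last, is bounded by a constant $t_0=t_0(k,W)$. Between consecutive cut points sits a bar; at each cut point, which is a genuine copy of $\beta$ spanning $k$ columns, run the cluster-graph construction of Section~\ref{cluster} together with Menger's Theorem exactly as in Section~\ref{mengthm}, obtaining a partition $X/Y$ with $\mu\le 4k^2-3k$ via Cases~1--3 of the proof of Lemma~\ref{lemma-minim}. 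Setting $U_i=Y_{i-1}\cup X_i$ as there, each $U_i$ may now span many columns but has weight at most $2t_0$, so by Corollary~\ref{cor-H-cwd} the clique-width of $G[U_i]$ is at most $12t_0+3$; moreover $\mu(U_i)\le 2(4k^2-3k)$ and $\mu(U_1\cup\cdots\cup U_i)\le 2(4k^2-3k)$ exactly as before, since a vertex in the interior of $U_i$ has all its neighbours inside $U_i$ and the equivalence classes are controlled entirely by the two bounding $\beta$-cuts. Lemma~\ref{lemma-partitions} then bounds $\cwd(G)$ by a constant depending only on $k$ and $W$, so $\CCC$ has bounded clique-width and $\GGG^\gamma$ is minimal.

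The main obstacle — and the only genuinely new point — is the bar construction. In Lemma~\ref{lemma-minim} the bars have fixed width $\mathcal{L}(\beta)+1$, and it is almost periodicity that forces a copy of $\beta$ into each. Under the weaker hypothesis $\gamma\in\Gamma$ the gaps between copies of $\beta$ may be arbitrarily long in length, so fixed-width bars need not contain $\beta$; the remedy is to let the bars have bounded \emph{weight} rather than bounded width, which is legitimate precisely because Corollary~\ref{cor-H-cwd} bounds clique-width in terms of the weight $t$ of the governing factor and not its length. Once this is in place, checking that the Menger and cluster-graph arguments and the resulting $\mu$-bounds are unaffected by long low-weight stretches is routine, as they depend only on the $k$ columns of the $\beta$-copy at each cut.
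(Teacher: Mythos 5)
Your proposal is correct and takes essentially the same approach as the paper: the ``only if'' direction is exactly the preamble argument of Section~\ref{recur}, and the ``if'' direction re-runs Lemma~\ref{lemma-minim} with bars delimited by consecutive occurrences of $\beta$ --- bounded in \emph{weight} rather than length --- feeding into the cluster-graph/Menger bounds, Corollary~\ref{cor-H-cwd}, and Lemma~\ref{lemma-partitions}, which is precisely the paper's proof. One cosmetic slip: bounded clique-width in the finitely-many-non-zero-letters case follows from Corollary~\ref{cor-H-cwd} (every graph in $\GGG^\gamma$ embeds in some $H^\gamma_{1,1}(m,n)$ whose defining factor has bounded weight), not from Theorem~\ref{thm-main}(a), which asserts only the unbounded direction.
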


\begin{proof}
If $\GGG^\gamma$ is a minimal hereditary graph class of unbounded clique-width, and $\gamma$ is not almost periodic, then from the preamble to Section~\ref{recur} we have already demonstrated that $\gamma \in \Gamma$.

To prove the converse, suppose $\gamma \in \Gamma$. In the case that $\gamma$ is almost periodic, we may appeal directly to Lemma~\ref{lemma-minim}. For this more general setting, we may proceed in an almost identical manner.

If $\CCC$ is a proper hereditary subclass of $\GGG^\gamma$ then there must exist a non-trivial finite forbidden graph $F$ that is in $\GGG^\gamma$ but not in $\CCC$. In turn, this graph $F$ must be an induced subgraph of some $H^\gamma_{1,j}(k,k)$ for some $k\in \mathbb{N}$. Any graph $G$ in $\CCC$ must be $\Free(H^\alpha_{1,j}(k,k))$ for the fixed value of $k \ge 2$. 

As before, let $\beta=\gamma_j \gamma_{j+1} \cdots \gamma_{j+k-2}$ and $G^*$ denote the subgraph of $G$ induced by the columns $C_j \dots C_{j+k-1}$ . We can use the same cluster graph arguments to show that there is a partition $X_{V_G}/Y_{V_G}$ of the vertices of $G^*$ such that $\mu_{G^*}(X_{V_G}) \le 4k^2-3k$. Symmetrically, $\mu_{G^*}(Y_{V_G}) \le 4k^2-3$. 

We know that the factor $\beta$ appears an infinite number of times in $\gamma$ and that the weight of the string between each copy of $\beta$ is bounded by a constant, say, $W(\beta)$.   

Suppose the $i$-th copy of $\beta$ in $\gamma$ generates the subgraph $G^*_i$ of $G$, with corresponding partition $X_i/Y_i$, then we define $U_i$ as the subgraph induced by the vertices of $Y_{i-1}$, $X_{i}$ and all the vertices of $G$ in columns between these two sets.

This gives us a partition of $G$ such that for every $i$:
\begin{enumerate}[label=(\alph*)]
\item by using Corollary \ref{cor-H-cwd} the clique-width of $G(U_i)$ is at most $6(2k+W(\beta))+3$, and 
\item $\mu(U_i) \le (8k^2-6k)$ and $\mu(U_1 \cup \cdots \cup U_i) \le (8k^2-6k)$,
\end{enumerate}
So from Lemma \ref{lemma-partitions} the clique-width of $G$ is at most $(6(2k+W(\beta))+3) \times (8k^2-6k)$.

But we know that $k$ and $W(\beta)$ are fixed dependent on the forbidden graph $F$ and hence the graph class $\CCC$ has bounded clique-width.
Thus $\GGG^\gamma$ is a minimal hereditary graph class of unbounded clique-width.
\end{proof}

While $\Gamma$ includes every periodic and almost periodic word over $\{0,1,2,3\}$, it does also contain other (recurrent) words. One simple way to generate such sequences is by substitution. We use~\cite{fogg:substitutions:} as our reference work on substitutions. For example, consider the infinite binary word $\psi$ generated by an iterative substitution $\sigma$, beginning with $1$ such that  $\sigma(1)=1010$ and $\sigma(0)=0$. 

If we denote $\sigma^n(1)$ as the $n$-th iteration beginning with $\sigma^0(1)=1$  then
\[
\sigma^n(1)=\sigma^{n-1}(\sigma(1))=\sigma^{n-1}(1)\ 0\ \sigma^{n-1}(1)\  0.
\]
The first four iterates, and the start of $\psi$, are as follows.
\begin{align*}
\sigma^1(1)	&= 1010 \\
\sigma^2(1)	&= 1010\,0\,1010\,0 \\
\sigma^3(1)	&= 1010010100\,0\,1010010100\,0 \\
\sigma^4(1)	&= 1010010100010100101000\,0\,
		 			1010010100010100101000\,0 \\
\psi	&=	10100101000101001010000101001010001010010100000
10100101000\dots
\end{align*}	
The word $\psi$ has the following characteristics. 
\begin{enumerate}[label=(\roman*)]
\item The number of ones doubles with each iteration and therefore $\psi$ contains an infinite number of ones.
\item $\psi$ is a fixed point of $\sigma$ (i.e. $\sigma(\psi)=\psi$).
\item By construction $\psi$ is recurrent but is not almost periodic, because it contains arbitrarily long strings of zeros. 
\end{enumerate}

The following lemma shows that $\psi\in\Gamma$, and therefore provides us with the promised counterexample to the conjecture of Collins et al~\cite{collins:infinitely-many:}.
\begin{lemma}\label{weightbound}
For any factor $\beta$ of the word $\psi$, the weight of the $\beta$-gap factors is bounded, and thus $\psi\in\Gamma$. 
\end{lemma}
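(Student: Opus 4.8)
The plan is to first pin down the exact global structure of $\psi$ and then translate the statement about weights of gap factors into an elementary fact about $2$-adic valuations. The key structural claim is that $\psi$ consists of isolated $1$s separated by blocks of $0$s whose lengths form the \emph{ruler sequence}: if $p_1<p_2<\cdots$ are the positions of the $1$s of $\psi$, then the number of $0$s strictly between the $i$-th and $(i{+}1)$-th one equals $v_2(i)+1$, where $v_2(i)$ denotes the exponent of the largest power of $2$ dividing $i$. I would prove this by induction on $n$ using the identity $\sigma^n(1)=\sigma^{n-1}(1)\,0\,\sigma^{n-1}(1)\,0$: the interior gaps of each copy of $\sigma^{n-1}(1)$ are inherited unchanged, the number $T_n$ of trailing $0$s of $\sigma^n(1)$ satisfies $T_n=T_{n-1}+1$ with $T_1=1$ (so $T_n=n$), and hence the single new gap, which occurs after the $2^{n-1}$-th one, has length $T_{n-1}+1=n=v_2(2^{n-1})+1$; since $v_2(2^{n-1}+i)=v_2(i)$ for $0<i<2^{n-1}$, the second copy reproduces exactly the ruler values. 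Letting $n\to\infty$ gives the statement for $\psi$.

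Next I would dispose of the purely-zero factors. A factor $0^m$ occurs precisely inside the $0$-blocks of length at least $m$, that is, at the gaps with $v_2(i)+1\ge m$, equivalently at indices $i$ divisible by $2^{m-1}$. Consecutive such indices differ by $2^{m-1}$, so between two consecutive maximal $0$-blocks of length at least $m$ there are at most $2^{m-1}$ ones; hence the weight of every $0^m$-gap factor is at most $2^{m-1}$.

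The main work is the case of a factor $\beta$ containing at least one $1$. Writing $\beta=0^{a}\,1\,0^{h_1}1\cdots 0^{h_r}1\,0^{b}$, an occurrence of $\beta$ is determined by the index $i$ of the $1$ of $\psi$ that is matched to the first $1$ of $\beta$, and this gives a bijection between occurrences and the admissible indices $i$. By the gap description, $\beta$ occurs at index $i$ exactly when: (i) $v_2(i{+}t{-}1)=h_t-1$ for $t=1,\dots,r$ (the interior gaps match \emph{exactly}); (ii) $v_2(i{-}1)\ge a-1$ (room for the $a$ leading zeros); and (iii) $v_2(i{+}r)\ge b-1$ (room for the $b$ trailing zeros); the finitely many indices near the start of $\psi$, where (ii) cannot be read off, are irrelevant to boundedness. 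Each of these conditions concerns divisibility of $i$ by a power of $2$ no larger than $2^{|\beta|}$, so the whole conjunction depends only on the residue of $i$ modulo $2^{N}$ for some fixed $N\le|\beta|$. Consequently the set of admissible indices is a union of residue classes modulo $2^{N}$, and it is nonempty since $\beta$ is a factor; therefore consecutive admissible indices differ by at most $2^{N}$. Since the number of $1$s strictly between two consecutive occurrences of $\beta$ is at most this index gap plus $|\beta|_1$, the weight of every $\beta$-gap factor is at most $2^{N}+|\beta|_1$, a bound depending only on $\beta$. As every factor of $\psi$ is of one of the two forms treated, this shows $\psi\in\Gamma$.

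The one step that needs genuine care is the last translation: one must verify that the interior gaps impose \emph{exact}-power conditions while the boundary zeros impose only divisibility (lower-bound) conditions, and that the resulting system is jointly governed by a single modulus $2^{N}$. Once this is set up correctly, recurrence of $\beta$ with bounded gaps is just the trivial observation that a nonempty union of arithmetic progressions modulo $2^{N}$ has gaps at most $2^{N}$. The identification of the ruler sequence in the first step is a routine induction, and the purely-zero case is immediate once that structure is available, so the crux of the argument is organising the valuation bookkeeping (together with the harmless left-boundary exceptions) without error.
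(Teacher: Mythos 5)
Your proof is correct, but it takes a genuinely different (and considerably more detailed) route than the paper's. The paper argues directly from the substitution structure: if the longest run of zeros in $\beta$ is $0^k$, then, since $\sigma^n(1)$ ends with $0^n$ and $\psi$ is a concatenation of copies of $\sigma^{k+1}(1)$ separated only by zeros, an occurrence of $\beta$ cannot straddle two of these blocks, so $\beta$ already occurs inside $\sigma^{k+1}(1)$; every block has weight $2^{k+1}$ and contains a copy of $\beta$, which bounds the weight of any $\beta$-gap factor. Your argument instead extracts the exact combinatorics of $\psi$ -- the ruler-sequence description of the zero-blocks via the $2$-adic valuation $v_2$ -- and converts occurrences of $\beta$ into a system of congruence conditions modulo $2^N$, so that the occurrence set is a nonempty union of residue classes and consecutive occurrences are at most $2^N$ apart. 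What your approach buys is strictly more information: a complete characterisation of \emph{where} $\beta$ occurs, an independent verification that $\psi$ is recurrent, and explicit handling of the boundary cases (your distinction between exact interior-gap conditions and lower-bound conditions at the two ends is precisely the point needing care, and you treat it correctly -- arguably more rigorously than the paper, whose block-straddling claim is left as an observation). What the paper's approach buys is brevity and portability: the same short argument applies verbatim to any substitution $\sigma_\gamma(1)=\delta$, $\sigma_\gamma(0)=0$, which the paper exploits immediately after the lemma to produce a whole family of words in $\Gamma$, whereas your valuation bookkeeping is tied to the specific substitution $1 \mapsto 1010$ and would need to be redone for other $\delta$. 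Both methods give bounds exponential in the length of $\beta$: yours $2^N+|\beta|_1$ with $N\le|\beta|$, the paper's $2^{k+1}$ with $k$ the longest zero run.
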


\begin{proof}
Suppose the longest subfactor of contiguous zeros in $\beta$ is $0^k$. It can be observed that $\sigma^{n}(1)$ ends with the factor $0^n$.  Hence $\beta$ must have appeared by the $(k+1)$-th iteration, $\sigma^{k+1}(1)$ or it is not a factor of $\psi$. Since $|\sigma^{k+1}(1)|_1 =2^{k+1}$, we have this as a bound on the weight between any consecutive occurrences. 
\end{proof}

We can extend this idea to construct other recurrent but not almost periodic infinite binary sequences in $\Gamma$. Indeed, any iterative substitution $\sigma_{\gamma}$ where $\sigma_{\gamma}(1)=\delta$ and $\sigma_{\gamma}(0)=0$ such that $\delta$ is a finite binary word whose first letter is 1, last letter is 0, and with $|\delta|_1 \ge 2$ will define a sequence $\gamma$. Now  $|\sigma^n_{\gamma}(1)|_1=|\delta|_1^n$ and it follows using Lemma~\ref{weightbound} that the weight of the set of $\beta$-gap factors for every factor $\beta$  is bounded, and hence $\gamma\in\Gamma$.

Finally, notice that $\Gamma$ does not comprise all recurrent binary sequences. Indeed, for any $\gamma\in\Gamma$ that is recurrent but not almost periodic, then the sequence $\overline{\gamma}$, formed as the complement of $\gamma$ (i.e.\ inverting the $1$s and $0$s), is a recurrent sequence that does not lie in $\Gamma$, and so $\GGG^{\overline{\gamma}}$ is not a minimal hereditary graph class of unbounded clique-width.

%
%
%
%
%
%
\section{Concluding remarks}

\paragraph{Linear clique-width} The \emph{linear clique-width} of a graph $G$ is defined as the minimum number of labels needed to construct $G$ by means of the operations allowed for standard clique-width, except for the disjoint union operation. Our minimality of unbounded clique-width arguments rest on constructing partitions that satisfy the conditions in Lemma~\ref{lemma-partitions}. In fact, there exists a `linear' analogue of this, see~\cite[Lemma 3]{collins:infinitely-many:}, and it is likely that this may be used in conjunction with our arguments above to show that $G^\alpha$ for any $\alpha\in\Gamma$ is also minimal of unbounded linear clique-width.

\paragraph{Towards a characterisation of clique width for bipartite graphs} 
While the ultimate goal of characterising which hereditary graph classes have unbounded clique-width remains somewhat remote, a nearer goal is the restriction of this characterisation to cover classes of \emph{bipartite} graphs. 

The identification of the collection of words $\Gamma$ represents a key step towards a fuller classification: even though we now have uncountably many minimal classes, the collection $\Gamma$ is relatively easily stated, and gives us the precise delineation between minimal and non-minimal for the classes under consideration.

To extend our work to cover all bipartite graphs still faces a number of hurdles. First, there exist minimal classes of bipartite graphs that are not of the form $\GGG^\alpha$ for any $\alpha\in\Gamma$ (for example, the bichain graphs of Atminas, Brignall, Lozin and Stacho~\cite{abls:minimal-classes-of:}), so the current four-letter alphabet $\{0,1,2,3\}$ is certainly not complete. Second, even with a more complete construction of classes, one must prove that such a list is complete, taking into account the pernicious issue of the class of square grids (which is bipartite and has unbounded clique-width yet contains no minimal class). 

\paragraph{Acknowledgements} We are grateful to Reem Yassawi for helpful discussions concerning recurrent and almost periodic sequences.

\bibliographystyle{plain}
\bibliography{refs}

\end{document}